\newcommand{\N}{\mathbb{N}}
\newcommand{\R}{\mathbb{R}}
\newcommand{\C}{\mathbb{C}}
\newcommand{\Z}{\mathbb{Z}}
\newcommand{\Q}{\mathbb{Q}}
\newcommand{\rar}{\rightarrow}
\newcommand{\mc}{\mathcal}
\newcommand{\bp}{\bm{p}}
\newcommand{\balph}{\bm{\alpha}}
\newtheorem*{theorem*}{Theorem}
\newtheorem{theorem}{Theorem}
\newtheorem{lemma}[theorem]{Lemma}
\newcommand{\rvline}{\hspace*{-\arraycolsep}\vline\hspace*{-\arraycolsep}}
\newcommand{\cW}{\mc{W}}
\newcommand{\NN}{\mathrm{N}}
\newcommand{\Tr}{\mathrm{Tr}}
\newcommand{\diam}{\mathrm{diam}}
\newcommand{\NA}{\mathrm{NA}_\eta(\balph)}
\begin{document}
	
	\title[Accumulation points of normalized approximations]{Accumulation points of normalized approximations}
	\author{Kavita Dhanda, Alan Haynes}

	\thanks{Research supported by NSF grant DMS 2001248.\\
		\phantom{A..}MSC 2020: 11J68, 11J13, 11K60}
	
	\keywords{Higher dimensional approximation, approximation of algebraic numbers, Kronecker's theorem}

	\begin{abstract}
		Building on classical aspects of the theory of Diophantine approximation, we consider the collection of all accumulation points of normalized integer vector translates of points $q\balph$ with $\balph\in\R^d$ and $q\in\Z$. In the first part of the paper we derive measure theoretic and Hausdorff dimension results about the set of $\balph$ whose accumulation points are all of $\R^d$. In the second part we focus primarily on the case when the coordinates of $\balph$ together with $1$ form a basis for an algebraic number field $K$. Here we show that, under the correct normalization, the set of accumulation points displays an ordered geometric structure which reflects algebraic properties of the underlying number field. For example, when $d=2$, this collection of accumulation points can be described as a countable union of dilates (by norms of elements of an order in $K$) of a single ellipse, or of a pair of hyperbolas, depending on whether or not $K$ has a non-trivial embedding into $\C$.
	\end{abstract}
	
	\maketitle

	\section{Introduction}\label{sec.Intro}
	Suppose that $d\in\N$ and that $\bm{\alpha}\in\R^d$. A large portion of classical Diophantine approximation is concerned with understanding small values of the quantities
	\begin{equation*}
		q\bm{\alpha}-\bm{p}\in\R^d,
	\end{equation*}
	where $q\in\Z$ and $\bm{p}\in\Z^d$. A first version of this problem, which is sufficient for many applications, is to accurately determine, for $\eta\in\R$, the `sizes' of the sets $\cW(\eta)$ of $\eta$-\textit{well approximable points}, and their complements $\cW(\eta)^c$. For $\eta\in\R$, $\mc{W}(\eta)$ is defined to be the collection of $\balph\in\R^d$ for which $\bm{0}$ is an accumulation point of the set
	\[\mathrm{NA}_\eta(\balph)=\{|q|^{\eta}\left(q\bm{\alpha}-\bm{p}\right):q\in\Z\setminus\{0\},\bp\in\Z^d\}.\]
	We will call the elements of $\mathrm{NA}_\eta(\balph)$ the $\eta$-\textit{normalized approximations} to $\balph$.
	
	For each $\eta$, the set $\mc{W}(\eta)$ satisfies a zero-full law, so that $\lambda(\cW(\eta))=0$ or $\lambda(\cW(\eta)^c)=0$, where $\lambda$ denotes Lebesgue measure on $\R^d$. In cases where the Lebesgue measure is zero, we seek finer information about Hausdorff dimensions of the corresponding sets. This is all well understood, primarily due to foundational work of Khintchine, Groshev, Jarn\'{i}k, Besicovitch, Schmidt, and Cassels, which we discuss below.
	
	In line with the classical theory described above, it is natural and desirable to understand the collection of all accumulation points $\mc{A}_\eta(\balph)$ of $\NA$, both for generic choices of $\balph$ and for specific choices with particular arithmetical properties. For example, the density of $\Q^d$ in $\R^d$ guarantees that $\mc{A}_{-1}(\balph)=\R^d$, for all $\balph\in\R^d$. On the other hand, Kronecker's theorem, a much less trivial result, tells us that $\mc{A}_0(\balph)=\R^d$ if and only if the numbers $1,\alpha_1,\ldots ,\alpha_d$ are $\Q$-linearly independent. However, perhaps surprisingly, apart from these two fundamental examples, there do not appear to be any non-trivial cases of $\eta$ (i.e. with $\eta>0$) for which this problem is well understood.
	
	In this paper we will study the sets $\mc{A}_\eta(\balph)$ from two main points of view. First, for $\eta\in\R$, we will prove measure theoretic and Hausdorff dimension results about the sets $\mc{D}(\eta)$ consisting of all $\balph\in\R^d$ for which $\mc{A}_\eta(\balph)=\R^d$, as well as results about the complementary sets $\mc{D}(\eta)^c$. As we will see, in analogy with the classical case, for $\eta\le 1$ the property $\mc{A}_\eta(\balph)=\R^d$ is satisfied for Lebesgue almost every $\balph\in\R^d$. However, for $d\ge 2$ and $0<\eta\le 1$, the sets $\mc{D}(\eta)^c$ are related to what are called sets of singular points in Diophantine approximation, and are much more mysterious and interesting than the corresponding sets $\mc{W}(\eta)^c$ from the classical case.
	
	Our second point of view will study particular choices of $\eta$ and $\balph$ for which the sets $\mc{A}_\eta(\balph)$ exhibit well ordered geometric structure. We will focus primarily on the cases when $d=1$ or $2$, $\eta=1/d$, and the coordinates of $\balph$ together with 1 form a $\Q$-basis for an algebraic number field. Using tools from algebraic number theory, we will show that if $d=1$ then $\mc{A}_{1}(\balph)$ is a discrete set determined by norms of elements in an order in the corresponding quadratic field. If $d=2$ then $\mc{A}_{1/2}(\balph)$ is a union of countably many dilations (determined by norms of elements in an order in the corresponding cubic field) of either a single ellipse or a pair of hyperbolas, depending on whether or not the corresponding cubic field has a nontrivial complex embedding.
	
	We will now explain the results just mentioned in more detail. For the reader's benefit, a self-contained summary of our notation is provided at the beginning of Section \ref{sec.Notation}.
	
	\subsection{Classical results} For comparison with our main theorems, we first present known results about the sets $\mc{W}(\eta)$. These results are summarized in Table \ref{fig.Classical} below.
	
	For $\bm{x}\in\R^d$ write
	\[|\bm{x}|=\max\left\{|x_1|,\ldots ,|x_d|\right\}\quad\text{and }\quad \|\bm{x}\|=\min_{\bm{p}\in\Z^d}\left|\bm{x}-\bm{p}\right|.\]
	It may be helpful in what follows to observe that a point $\bm{\gamma}\in\R^d$ is an accumulation point of $\NA$ if and only if, for every $\epsilon>0$, there exist $q\in\Z\setminus\{0\}$ and $\bm{p}\in\Z^d$ for which
	\begin{equation}\label{eqn.AccPtForm1}
		0<\left|q\bm{\alpha}-\frac{\bm{\gamma}}{|q|^{\eta}}-\bm{p}\right|<\frac{\epsilon}{|q|^{\eta}}.
	\end{equation}
	If $\eta<0$ then, for any $\bm{\gamma}\in\R^d$ and $\epsilon>0$, the right hand side of this inequality will be greater than $1$ for all sufficiently large $q$. Therefore it follows trivially that $\mc{W}(\eta)=\mc{D}(\eta)=\R^d$ for $\eta<0$. If $\eta\ge 0$ then, for $\epsilon<1/2$, the existence of $q$ and $\bm{p}$ satisfying \eqref{eqn.AccPtForm1} is equivalent to the existence of an integer $q\in\Z\setminus\{0\}$ for which
	\begin{equation}\label{eqn.AccPtForm2}
		0<\left\|q\bm{\alpha}-\frac{\bm{\gamma}}{|q|^{\eta}}\right\|<\frac{\epsilon}{|q|^{\eta}}.
	\end{equation}
	
	The multidimensional version of Dirichlet's theorem \cite[Chapter 1, Theorem VI]{Cass1957} tells us that, for any $\bm{\alpha}\in\R^d$, there are infinitely many $q\in\N$ for which
	\begin{equation*}
		q^{1/d}\|q\balph\|< 1.
	\end{equation*}
	When $\balph\in\Q^d$ the set $\{q\balph+\bm{p} : q\in\Z,\bm{p}\in\Z^d\}$ is discrete (and $\|q\balph\|=0$ infinitely often). Taking this into consideration, Dirichlet's theorem implies that, for all $0\le \eta<1/d$,
	\[\mc{W}(\eta)=\R^d\setminus\Q^d.\]
	
	Combining Dirichlet's theorem with a higher dimensional version of Cassels's lemma (see Lemma \ref{lem.CassHighD}), we have that for almost every $\bm{\alpha}\in\R^d$ with respect to Lebesgue measure,
	\begin{equation*}
		\inf_{q\in\N} q^{1/d}\|q\balph\|=0.
	\end{equation*}
	This last assertion (in fact a stronger statement) also follows from the Khintchine-Groshev theorem (see Theorem \ref{thm.KhinGros}), and it implies that $\lambda(\mc{W}(1/d))=\infty$ and that $\lambda(\mc{W}(1/d)^c)=0$.
	
	\vspace*{0bp}
	\renewcommand{\arraystretch}{1.8}
	\begin{table}[h]
		\caption{Summary of results about $\mc{W}(\eta)$ and $\mc{W}(\eta)^c$, for $\eta\in\R$ and $d\in\N$.}\label{fig.Classical}\vspace*{-10bp}
		\[\begin{array}{c|c|c}
			\rule[-8bp]{0pt}{0bp}\eta<0& \mc{W}(\eta)=\R^d & \mc{W}(\eta)^c=\emptyset \\
			\hline \rule{0pt}{21bp}\rule[-14bp]{0pt}{0bp} 0\le\eta<1/d & \begin{split}
				&\mc{W}(\eta)=\R^d\setminus\Q^d\\&\hspace*{14bp}\text{(Dirichlet)}
			\end{split} & \mc{W}(\eta)^c=\Q^d \\ 
			\hline \rule{0pt}{22bp}\rule[-14bp]{0pt}{0bp} \eta=1/d & \begin{split}
				&\lambda(\mc{W}(\eta))=\infty, ~\lambda(\mc{W}(\eta)^c)=0\\&\hspace*{20bp}\text{(Khintchine, Groshev)}
			\end{split} & \begin{split}	&\dim_H(\mc{W}(\eta)^c)=d\\&\hspace*{3bp}\text{(Jarn\'{i}k, Schmidt)} \end{split} \\
			\hline \rule{0pt}{28bp}\rule[-16bp]{0pt}{0bp}
			\eta>1/d& \begin{split}	&\dim_H(\mc{W}(\eta))=\frac{d+1}{1+\eta}\\&\hspace*{3bp}\text{(Jarn\'{i}k, Besicovitch)} \end{split} & \begin{split}
				&\lambda(\mc{W}(\eta)^c)=\infty, \phantom{\frac{d}{\eta}}\hspace*{-6bp}\lambda(\mc{W}(\eta))=0\\&\hspace*{33bp}\text{(Borel-Cantelli)}
			\end{split}
		\end{array}\]	
	\end{table}

	To better understand the size of $\mc{W}(1/d)^c$, note that a point $\bm{\alpha}\in\R^d\setminus\Q^d$ will be in $\mc{W}(1/d)^c$ if and only if
	\begin{equation}\label{eqn.BadDef}
		\inf_{q\in\N} q^{1/d}\|q\balph\|>0.	
	\end{equation}
	Such points are called \textit{badly approximable}. It was proved by Jarn\'{i}k \cite{Jarn1928} for $d=1$ and by Schmidt \cite{Schm1969} for $d\ge 2$ that the set of all badly approximable points in $\R^d$ has Hausdorff dimension $d$. 
	
	For $\eta>1/d$ a straightforward application of the Borel-Cantelli lemma implies that $\lambda(\mc{W}(\eta))=0$. Furthermore, theorems of Jarn\'{i}k \cite{Jarn1929} and Besicovitch \cite{Besi2006} for $d=1$, and of Jarn\'{i}k \cite{Jarn1931} for $d\ge 2$ imply that
	\begin{equation*}
		\dim_H(\mc{W}(\eta))=\frac{d+1}{1+\eta}\quad\text{for}\quad \eta>1/d.
	\end{equation*}

	\subsection{Measure theory and dimension results for $\mc{D}(\eta)$ and $\mc{D}(\eta)^c$} Now we present results about the sets $\mc{D}(\eta)$ and $\mc{D}(\eta)^c$. To our knowledge, most of the non-trivial results in this subsection have not appeared in previous work. Exceptions are the cases of $\eta=-1$ and $0$ (mentioned above), and the case of $d\ge 3$ in Theorem \ref{thm.LebMeasDens} (see the comments after the statement of the theorem). The results presented here are summarized in Table \ref{fig.NewResults} below.
	
	It was already pointed out after equation \eqref{eqn.AccPtForm1} that $\mc{D}(\eta)=\R^d$ for $\eta<0.$ Note also that
	\begin{equation}\label{eqn.DWInclusions}
		\mc{D}(\eta)\subseteq\mc{W}(\eta)\quad\text{for any}\quad \eta\in\R.
	\end{equation}
	This gives that 
	\begin{align*}
		\lambda(\mc{D}(\eta))=0\quad\text{for}\quad \eta>1/d.
	\end{align*}
	The Lebesgue measure theory of the sets $\mc{D}(\eta)$ is therefore completed by the following theorem.
	\begin{theorem}\label{thm.LebMeasDens}
		For $d\in\N$ and $\eta\le 1/d$ we have that
		\[\lambda(\mc{D}(\eta))=\infty\quad\text{and}\quad\lambda(\mc{D}(\eta)^c)=0.\]
	\end{theorem}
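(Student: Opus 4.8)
The plan is to prove $\lambda(\mc D(\eta)^c)=0$; granting this, $\mc D(\eta)$ is a co-null subset of $\R^d$, so $\lambda(\mc D(\eta))=\infty$ follows at once. For $\eta\le 0$ the statement is already in hand: $\mc D(\eta)=\R^d$ when $\eta<0$, and when $\eta=0$ Kronecker's theorem identifies $\mc D(0)^c$ with the set of $\balph$ for which $1,\alpha_1,\dots,\alpha_d$ are $\Q$-linearly dependent, which is a countable union of affine hyperplanes. So I shall assume $0<\eta\le 1/d$ from now on.

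The first step is a reduction. Since $\mc A_\eta(\balph)$, being a set of accumulation points, is closed, it equals $\R^d$ once it is dense; hence it suffices to prove that for each fixed $\bm\gamma\in\Q^d$ and $\epsilon\in\Q_{>0}$ the set
\[E_{\bm\gamma,\epsilon}=\big\{\balph\in\R^d:\ \|q\balph-|q|^{-\eta}\bm\gamma\|<|q|^{-\eta}\epsilon\ \text{ for infinitely many }q\in\N\big\}\]
is co-null, and then to intersect over the countably many pairs $(\bm\gamma,\epsilon)$, discarding for each $\bm\gamma$ the additional null set of $\balph$ for which $|q|^\eta(q\balph-\bp)=\bm\gamma$ for some $q\in\N$, $\bp\in\Z^d$ (for fixed $q,\bp$ this is a single affine hyperplane). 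Indeed, by the remarks surrounding \eqref{eqn.AccPtForm1} and \eqref{eqn.AccPtForm2}, if $\balph$ lies in every $E_{\bm\gamma,\epsilon}$ and is outside that exceptional set, then every punctured neighbourhood of $\bm\gamma$ contains a point of $\NA$, so $\bm\gamma\in\mc A_\eta(\balph)$. Finally, the condition defining $E_{\bm\gamma,\epsilon}$ is invariant under $\balph\mapsto\balph+\bm m$ for $\bm m\in\Z^d$, so we may and do work on the torus $\mathbb{T}^d=\R^d/\Z^d$.

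The core is a divergent Borel--Cantelli argument. Put $A_q=\{\balph\in\mathbb{T}^d:\|q\balph-|q|^{-\eta}\bm\gamma\|<|q|^{-\eta}\epsilon\}$, a union of $q^d$ axis-parallel cubes of side $2\epsilon q^{-1-\eta}$; since $\balph\mapsto q\balph$ is a measure-preserving $q^d$-to-one self-map of $\mathbb{T}^d$, we have $\lambda(A_q)=(2\epsilon)^d q^{-d\eta}$ for all $q$ with $\epsilon q^{-\eta}<1/2$, and therefore $\sum_q\lambda(A_q)=\infty$, precisely because $\eta\le 1/d$. For $\bm\gamma=\bm 0$ this divergence already gives $\lambda(\limsup_q A_q)=1$ by the Khintchine--Groshev theorem (Theorem \ref{thm.KhinGros}). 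For a general $\bm\gamma$ I would instead invoke the higher-dimensional Cassels-type lemma (Lemma \ref{lem.CassHighD}): together with the divergence just noted, it yields $\lambda(\limsup_q A_q)=1$ as soon as one has a quasi-independence estimate of the form
\[\sum_{q,q'\le N}\lambda(A_q\cap A_{q'})\le(1+o(1))\Big(\sum_{q\le N}\lambda(A_q)\Big)^{2}+O\Big(\sum_{q\le N}\lambda(A_q)\Big)\qquad(N\to\infty).\]
Since $\bigcap_\epsilon\bigcap_{\bm\gamma}E_{\bm\gamma,\epsilon}$ (minus the exceptional null sets) is then co-null and contained in $\mc D(\eta)$, this would complete the proof.

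The main obstacle is the quasi-independence estimate. The contribution of the diagonal $q=q'$ is exactly $\sum_{q\le N}\lambda(A_q)$, so the issue is the off-diagonal terms $\lambda(A_q\cap A_{q'})$ with $q\ne q'$. One writes $g=\gcd(q,q')$ and analyses when a cube of the mesh $A_q$ can meet a cube of the mesh $A_{q'}$: this happens only if a certain integer vector — depending on the two cubes and on $\bm\gamma$ — lies in a box whose side lengths are governed by $g$, $q$, $q'$ and $\epsilon$. Counting such vectors shows that for the overwhelming majority of pairs (those with $g$ small) one has $\lambda(A_q\cap A_{q'})=(1+o(1))\lambda(A_q)\lambda(A_{q'})$, while the pairs with $g$ large contribute, after summation and use of standard divisor-sum bounds, only $O\big(\sum_{q\le N}\lambda(A_q)\big)$. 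The inhomogeneous shift $|q|^{-\eta}\bm\gamma$ merely translates the two meshes relative to one another and plays no role in these counts, so the argument runs parallel to the homogeneous case underlying the Khintchine--Groshev theorem.
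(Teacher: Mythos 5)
Your reduction (closedness of $\mc{A}_\eta(\balph)$, intersecting over rational $\bm{\gamma}$ and $\epsilon$, passing to the torus, discarding the null set where the target is hit exactly) matches the paper's, but the core step — full measure of $\limsup_q A_q$ for a fixed $\bm{\gamma}\neq\bm{0}$ — has a genuine gap. First, Lemma \ref{lem.CassHighD} is not a divergence Borel--Cantelli tool: it says nothing about deducing full measure from divergence plus a second-moment bound, so the lemma you cite cannot play the role you assign to it; you would need a Chung--Erd\H{o}s/Kochen--Stone type inequality, which the paper never states. Second, and more seriously, the quasi-independence estimate you assume, $\sum_{q,q'\le N}\lambda(A_q\cap A_{q'})\le(1+o(1))\bigl(\sum_{q\le N}\lambda(A_q)\bigr)^2+O\bigl(\sum_{q\le N}\lambda(A_q)\bigr)$, is precisely the heart of the matter and is not obtainable from the counting you sketch, at least not at the critical exponent $\eta=1/d$. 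Doing the count honestly, a pair $q<q'$ with $g=\gcd(q,q')$ contributes an extra term of size about $g\,\epsilon^d q'^{-d(1+\eta)}$ coming from (nearly) coincident mesh points; already for $d=1$, $\eta=1$, pairs with $q\mid q'$ genuinely satisfy $\lambda(A_q\cap A_{q'})\gg\epsilon q/q'^{2}\gg\lambda(A_q)\lambda(A_{q'})$, and summing the gcd term over $q<q'\le N$ gives order $\epsilon(\log N)^2\asymp\epsilon^{-1}\bigl(\sum_{q\le N}\lambda(A_q)\bigr)^2$, which is not $o\bigl((\sum\lambda(A_q))^2\bigr)$ and is not $O\bigl(\sum\lambda(A_q)\bigr)$. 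So the second-moment method, as sketched, yields only $\lambda(\limsup_q A_q)\gg\epsilon$ rather than full measure, and upgrading positive measure to full measure would require an additional zero-one law or density argument that you have not supplied. (The claim that the shift ``plays no role'' also hides an equidistribution issue: whether a multiple of $g$ falls in the relevant short interval depends on the offset $\bm{\gamma}(q'q^{-\eta}-qq'^{-\eta})$, and controlling this on average is nontrivial.) A minor slip: for fixed $q,\bp$ the exceptional set $\{|q|^\eta(q\balph-\bp)=\bm{\gamma}\}$ is a single point, not a hyperplane; this is harmless.

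The paper avoids all overlap estimates by a containment trick, and this is exactly where Lemma \ref{lem.CassHighD} is meant to be used. Each shifted target cube of $A_q$, centered at $\bm{p}/q+\bm{\gamma}q^{-1-\eta}$ with radius $\epsilon q^{-1-\eta}$, is contained in the cube centered at the same point with radius $2|\bm{\gamma}|q^{-1-\eta}$, and that larger cube contains the ball of radius $|\bm{\gamma}|q^{-1-\eta}$ about the unshifted rational point $\bm{p}/q$; hence the limsup of the large cubes has full measure by the homogeneous Khintchine--Groshev theorem (the sum diverges precisely because $\eta\le 1/d$). Since the small shifted cubes occupy a fixed positive proportion $(\epsilon/2|\bm{\gamma}|)^d$ of the large cubes, Lemma \ref{lem.CassHighD} (after a Borel--Cantelli argument disposing of the boundary components that are not full hypercubes) transfers full measure to $\limsup_q A_q$ for every $\epsilon>0$. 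If you replace your second-moment step by this use of the Cassels-type lemma, your proof becomes essentially the paper's.
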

	Although this appears similar to the corresponding result for $\mc{W}(\eta)$ (the Khintchine-Groshev theorem), it does not follow as an immediate corollary. It seems that the result of Theorem \ref{thm.LebMeasDens} is natural enough that it could have been observed in prior literature, however, we were unable to locate a reference. For $d\ge 3$ it can be derived from more general recent work of Allen and Ramirez (see \cite[Theorem 2]{AlleRami2022}). In any case, we will give a self contained and relatively simple proof of Theorem \ref{thm.LebMeasDens}, for all $d\in\N$, in Section \ref{sec.ThmDens}.
	
	To discuss our results for $0\le \eta<1/d$, let $K_d$ denote the set of $\balph\in\R^d$ for which the numbers $1,\alpha_1,\ldots, \alpha_d$ are $\Q$-linearly independent (this is the notation used in \cite{Bake1992}; the $K$ here can be thought of as standing for `Kronecker'). By Kronecker's theorem in Diophantine approximation \cite[Chapter 3, Theorem IV]{Cass1957}, we have that $\mc{D}(0)=K_d.$ Furthermore, for $\balph\notin K_d$, the collection of points $\{q\balph:q\in\Z\}$ lies on an affine hyperplane of $\R^d$ which is defined over $\Q$, so that modulo $\Z^d$ this collection lies on a proper rational sub-torus of $\R^d/\Z^d.$ It follows from this and the characterization given in \eqref{eqn.AccPtForm2} that $K_d^c\subseteq \mc{D}(\eta)^c$ for all $\eta\ge 0$.
	
	From the discussion in the previous paragraph we see that for $\eta>0$ the set $K_d^c$ plays a role for $\mc{D}(\eta)^c$ which is analogous to that played by the set $\Q^d$ for $\mc{W}(\eta)^c$. However, as indicated by our next result, for $d\ge 2$ and $0<\eta<1/d$, the sets $\mc{D}(\eta)^c$ turn out to contain many other points as well.
	\begin{theorem}\label{thm.SingDens}
		Suppose that $d\in\N$ and $0<\eta<1/d$. If $d=1$ then $\mc{D}(\eta)^c=K_1^c=\Q$. If $d\ge 2$ then
		\begin{equation}\label{eqn.HDSingLowBd}
			\dim_H(\mc{D}(\eta)^c\setminus K_d^c)\ge \frac{d}{1+1/\eta}
		\end{equation}
		and
		\begin{equation}\label{eqn.HDSingUpBd}
			\dim_H(\mc{D}(\eta)^c\setminus K_d^c)\le d-2+\frac{2(d+1)((1+\eta)d-1)}{(1+\eta)d^2}.
		\end{equation}
	\end{theorem}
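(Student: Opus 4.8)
The plan is to treat the two bounds in \eqref{eqn.HDSingLowBd} and \eqref{eqn.HDSingUpBd} separately, using the characterization \eqref{eqn.AccPtForm2} of accumulation points to reinterpret membership in $\mc{D}(\eta)^c$ as a Diophantine condition on $\balph$. Concretely, $\balph\notin\mc{D}(\eta)$ means there exists $\bm{\gamma}\in\R^d$ such that $\bm{\gamma}$ is \emph{not} an accumulation point of $\NA$, i.e. there is $\epsilon>0$ for which \eqref{eqn.AccPtForm2} fails for all $q\in\Z\setminus\{0\}$. So $\balph\in\mc{D}(\eta)^c\setminus K_d^c$ precisely when $1,\alpha_1,\dots,\alpha_d$ are $\Q$-linearly independent and yet the orbit $\{|q|^\eta q\balph \bmod |q|^\eta\Z^d\}$ misses a ball of some fixed positive radius (in the appropriately rescaled sense) around some $\bm{\gamma}$. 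The $d=1$ case should follow quickly: when $\eta<1$, inequality \eqref{eqn.AccPtForm2} with a fixed target $\gamma$ reduces to a one-dimensional inhomogeneous approximation problem, and for irrational $\alpha$ the three-distance theorem (or a direct continued-fraction argument) forces every $\gamma$ to be an accumulation point, so $\mc{D}(\eta)^c\cap K_1 = \emptyset$; combined with $K_1^c = \Q$ and $\mc{D}(\eta)^c \supseteq K_1^c$ this gives $\mc{D}(\eta)^c = \Q$.

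For the lower bound \eqref{eqn.HDSingLowBd} I would exhibit an explicit Cantor-type subset of $\mc{D}(\eta)^c\setminus K_d^c$ of the required dimension. The natural candidates are $\balph$ that are ``too well approximable along a single sequence'' in a controlled way: if there is an increasing sequence $q_k$ with $\|q_k\balph\|$ extremely small (say $\|q_k\balph\| \le q_k^{-1/\eta}$ up to constants) while the $q_k$ grow fast enough that intermediate denominators contribute nothing, then for a suitable $\bm{\gamma}$ the quantity $\|q\balph - \bm{\gamma}/|q|^\eta\|$ stays bounded below. Building such $\balph$ by a nested-interval construction, choosing $d-1$ of the coordinates freely in a positive-measure set and forcing the approximation condition on the remaining structure, yields a set whose dimension is governed by the exponent $1/\eta$; the bookkeeping should produce exactly $d/(1+1/\eta)$. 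The key is to verify that the constructed $\balph$ genuinely lie outside $K_d^c$ (i.e. remain $\Q$-linearly independent with $1$), which can be arranged generically within the Cantor set, and that no $q\notin\{q_k\}$ accidentally makes \eqref{eqn.AccPtForm2} hold for the chosen $\bm{\gamma}$.

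For the upper bound \eqref{eqn.HDSingUpBd} the plan is to cover $\mc{D}(\eta)^c\setminus K_d^c$ by sets defined through the failure of \eqref{eqn.AccPtForm2} and to relate these to known covering results, in particular the Jarn\'ik--Besicovitch-type estimate $\dim_H(\mc{W}(\eta)) = (d+1)/(1+\eta)$ recorded in Table \ref{fig.Classical}, applied in a split-coordinate fashion: one expects that if $\bm{\gamma}$ can be avoided then $\balph$ must be well approximable in roughly two of its coordinates (costing a Jarn\'ik-type dimension drop in those) while the other $d-2$ coordinates are unconstrained (contributing $d-2$ to the dimension), which is exactly the shape of the bound. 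Making this rigorous means showing that a point whose orbit avoids a fixed ball must have small $\|q\balph\|$ infinitely often with a quantitative rate tied to $\eta$, and then using a fibered covering argument; I expect this direction to be the main obstacle, since the avoidance condition is a statement about the \emph{inhomogeneous} orbit and transferring it to a homogeneous well-approximability statement on a lower-dimensional section requires care. A transference principle between inhomogeneous and homogeneous approximation, together with a Borel--Cantelli covering for the resulting homogeneous condition, should close the gap, and tracking the exponents through the $d-2$ free directions and the $2$ constrained directions gives the stated value $d-2 + \frac{2(d+1)((1+\eta)d-1)}{(1+\eta)d^2}$.
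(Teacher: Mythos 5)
Your plan has a genuine gap in both directions, and the gap is the same in each: you work with \emph{limsup} (infinitely often) approximation conditions, whereas membership in $\mc{D}(\eta)^c\setminus K_d^c$ is governed by \emph{uniform} (for all large $Q$) approximation, i.e.\ by singular-type vectors. For the lower bound, your central claim --- that if $\|q_k\balph\|$ is extremely small along a sparse sequence $q_k$ then some target $\bm{\gamma}$ is avoided --- is false as stated: in $d=1$ it would put Liouville numbers in $\mc{D}(\eta)^c$, contradicting the very assertion $\mc{D}(\eta)^c=\Q$ you are proving; and for $d\ge 2$ nothing in a lacunary construction prevents intermediate denominators $q\notin\{q_k\}$ from hitting the moving target $\bm{\gamma}/|q|^\eta$ at scale $\epsilon/|q|^\eta$. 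What the paper actually does is pass to the dual problem: it introduces the set $E_+(1+1/\eta)$ of $\balph\in K_d$ admitting, for \emph{every} large $R$, a vector $\bm{r}\in\Z^d\setminus\{\bm{0}\}$ with \emph{non-negative} entries, $|\bm{r}|\le R$ and $\|\bm{r}\balph\|\le\epsilon R^{-(1+1/\eta)}$, and shows (Lemma \ref{lem.Sing+Cont}) that such $\balph$ avoid the specific target $(1,\ldots,1)$: if that target were an accumulation point one would get $0<\tfrac{r_1+\cdots+r_d}{|q|^\eta}<\tfrac{r_1+\cdots+r_d}{|q|^\eta}$ up to an $\epsilon$-loss, a contradiction. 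The non-negativity of $\bm{r}$ is essential for this sign argument, and it is why the paper must rework Baker's Cantor construction (Lemma \ref{lem.Sing+DimLowBd}) with non-negative coefficient vectors, obtaining only $\dim_H E_+(\omega)\ge d/\omega$ and hence the exponent $d/(1+1/\eta)$ in \eqref{eqn.HDSingLowBd}. None of these ingredients (uniformity, the sign constraint, the modified Cantor construction) appears in your sketch, and a set built only from a limsup condition would consist mostly of points of $\mc{D}(\eta)$, not its complement.

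For the upper bound the same confusion blocks your reduction: avoidance of a single $\bm{\gamma}$ does \emph{not} merely force $\|q\balph\|$ to be small infinitely often with some rate; it forces $\min_{0<q\le Q}\|q\balph\|=o(Q^{-\omega})$ for \emph{all} large $Q$ whenever $\omega<1/(d\eta+d-1)$. This is the content of the paper's Lemma \ref{lem.NonDenseSing}, proved via Cassels's transference (Lemma \ref{lem.TransPrinc1}): if the uniform condition fails along a sequence of $Q$'s, the orbit $\{q\balph+\bm{p}\}$ is quantitatively $\varrho$-dense in windows $(Q_0-Q_1,Q_0]$ with $\varrho$ beating the scale $\epsilon/q^\eta$, and a careful choice of the window handles the fact that the target $\bm{\gamma}/q^\eta$ moves with $q$ --- precisely the step you flag as ``the main obstacle'' but do not carry out. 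After that, the paper transfers to the dual linear form (Lemma \ref{lem.SingTrans}), landing in the singular set $E(d-1+d\omega)$, and quotes the Baker--Yavid--Rynne bound of Theorem \ref{thm.SingHDBds}; the $d-2$ in \eqref{eqn.HDSingUpBd} comes from that theorem (singular vectors clustering near rational hyperplanes), not from a split-coordinate Jarn\'{\i}k--Besicovitch count. A Borel--Cantelli covering of an infinitely-often condition cannot produce the stated exponent, since the corresponding limsup sets have strictly larger dimension and are mostly contained in $\mc{D}(\eta)$. Your $d=1$ argument via continued fractions is plausible in spirit, but it too must handle the moving target uniformly in $q$; in the paper it falls out of the same transference lemma together with the fact that $S(\omega)=\emptyset$ for $d=1$, $\omega>1$.
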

	We will show in the proof of Theorem \ref{thm.SingDens} that, for $0<\eta<1/d$, the elements of $\mc{D}(\eta)^c\setminus K_d^c$ are a subset of the collection of \textit{singular points} in $\R^d$ (these will be defined in Section \ref{sec.Notation}). The study of singular points has a long history, going back at least to Khintchine \cite{Khin1926}, and it is also an area of active research in Diophantine approximation and dynamical systems (see \cite{BereGuanMarnRamiVela2022,BugeCheuChev2019,Cheu2011,LiaoShiSolaTama2020} and references within).

	\vspace*{0bp}
	\renewcommand{\arraystretch}{1.8}
	\begin{table}[h]
		\caption{Summary of results about $\mc{D}(\eta)$ and $\mc{D}(\eta)^c$, for $\eta\in\R$ and $d\in\N$.}\label{fig.NewResults}\vspace*{-10bp}
		\[\begin{array}{c|c|c}
			\rule[-8bp]{0pt}{0bp}\eta<0& \mc{D}(\eta)=\R^d & \mc{D}(\eta)^c=\emptyset \\
			\hline \rule{0pt}{21bp}\rule[-14bp]{0pt}{0bp} \eta=0 & \begin{split}
				&\mc{D}(\eta)=K_d\\&\hspace*{-1.5bp}\text{(Kronecker)}
			\end{split} & \mc{D}(\eta)^c=K_d^c\\
			\hline \rule{0pt}{21bp}\rule[-14bp]{0pt}{0bp} 0<\eta<1/d & \begin{split}
				&\lambda(\mc{D}(\eta))=\infty\\&\hspace*{2.5bp}\text{(Theorem \ref{thm.LebMeasDens})}
			\end{split} & \begin{split}&0<\dim_H(\mc{D}(\eta)^c\setminus K_d^c)<d\\&\hspace*{10bp}\text{($d\ge 2$, cf. Theorem \ref{thm.SingDens})}\end{split} \\ 
			\hline \rule{0pt}{22bp}\rule[-14bp]{0pt}{0bp} \eta=1/d & \begin{split}
				&\lambda(\mc{D}(\eta))=\infty, ~\lambda(\mc{D}(\eta)^c)=0\\&\hspace*{37.8bp}\text{(Theorem \ref{thm.LebMeasDens})}
			\end{split} & \begin{split}	&\dim_H(\mc{D}(\eta)^c)=d\\&\hspace*{3bp}(\mc{W}(\eta)^c\subseteq\mc{D}(\eta)^c)\end{split} \\
			\hline \rule{0pt}{28bp}\rule[-16bp]{0pt}{0bp}
			\eta>1/d& \begin{split}	&\dim_H(\mc{D}(\eta))=\frac{d+1}{1+\eta}\\&\hspace*{21.5bp}\text{(Theorem \ref{thm.JarBesDens})} \end{split} & \begin{split}
				&\lambda(\mc{D}(\eta)^c)=\infty, \phantom{\frac{d}{\eta}}\hspace*{-6bp}\lambda(\mc{D}(\eta))=0\\&\hspace*{32bp}(\mc{D}(\eta)\subseteq\mc{W}(\eta))
			\end{split}
		\end{array}\]	
	\end{table}
	
	It is clear that the bounds in Theorem \ref{thm.SingDens} imply the weaker statement recorded in Table \ref{fig.NewResults} that, for $d\ge 2$ and $0<\eta<1/d$,
	\[0<\dim_H(\mc{D}(\eta)^c\setminus K_d^c)<d.\]
	Note that, as $\eta\rar (1/d)^-$, the right hand side of \eqref{eqn.HDSingUpBd} tends to $d$. This is consistent with the fact that, by \eqref{eqn.DWInclusions},
	\[\mc{W}(1/d)^c\subseteq\mc{D}(1/d)^c,\]
	and therefore
	\[\dim_H(\mc{D}(1/d)^c)=d.\]
	
	Finally, for $\eta>1/d$ we will establish the following analogue of the Jarn\'{i}k-Besicovitch theorem for the sets $\mc{D}(\eta)$.
	\begin{theorem}\label{thm.JarBesDens}
		For $d\in\N$ and $\eta>1/d$,
		\[\dim_H(\mc{D}(\eta))=\frac{d+1}{1+\eta}.\]
	\end{theorem}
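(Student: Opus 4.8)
\emph{The upper bound} is immediate from the classical theory: by \eqref{eqn.DWInclusions} we have $\mc D(\eta)\subseteq\mc W(\eta)$, so $\dim_H(\mc D(\eta))\le\dim_H(\mc W(\eta))=\frac{d+1}{1+\eta}$ by the Jarn\'{i}k--Besicovitch theorem (last row of Table~\ref{fig.Classical}). The whole content is the lower bound, and the plan is to prove $\dim_H(\mc D(\eta))\ge\frac{d+1}{1+\eta}$ by constructing, for every $\epsilon>0$, a Cantor-type set $C\subseteq\mc D(\eta)$ with $\dim_H(C)\ge\frac{d+1}{1+\eta}-\epsilon$; letting $\epsilon\to0$ then finishes the proof.

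The first ingredient is a soft observation that reduces membership in $\mc D(\eta)$ to something a Cantor construction can enforce. Suppose $\balph$ admits denominators $q_k\to\infty$ with $q_k^\eta\|q_k\balph\|\to0$ for which the error directions $\bm\theta_k/|\bm\theta_k|$, where $\bm\theta_k=q_k\balph-\bm p_k$ and $\bm p_k$ is the nearest integer vector to $q_k\balph$, are so spread out that every point of $S^{d-1}$ is a subsequential limit of them. Then $\mc A_\eta(\balph)=\R^d$. Indeed, for $m\in\N$ the normalized approximation arising from $(mq_k,m\bm p_k)$ equals $(mq_k)^\eta(mq_k\balph-m\bm p_k)=m^{1+\eta}q_k^\eta\bm\theta_k$, so as $m$ varies these points run along the ray $\R_{\ge0}\,\bm\theta_k$ with consecutive magnitudes in ratio $((m+1)/m)^{1+\eta}\to1$; given $\bm\gamma\ne\bm0$ and a tolerance, one picks $k$ with $\bm\theta_k/|\bm\theta_k|$ near $\bm\gamma/|\bm\gamma|$ and $q_k^\eta|\bm\theta_k|$ small (possible, as the first happens infinitely often and the second eventually), and then $m$ with $m^{1+\eta}q_k^\eta|\bm\theta_k|\approx|\bm\gamma|$; the smallness of $q_k^\eta|\bm\theta_k|$ forces $m$ large, so the discretization is harmless. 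Together with the fact that the accumulation set is closed, this gives $\mc A_\eta(\balph)=\R^d$.

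Next I would build $C$. Fix a dense sequence $(\bm u_j)\subseteq S^{d-1}$ and a schedule $k\mapsto j(k)$ repeating every index infinitely often, angles $\theta_k\downarrow0$ and factors $H_k\uparrow\infty$ (both varying subexponentially in $k$), and a rapidly growing sequence $Q_k$ with $\log Q_k\sim\rho\log Q_{k-1}$ for a large constant $\rho$ to be fixed last. The level-$k$ cubes of $C$ will be centered at rationals $\bm r_k=\bm p_k/q_k$ with $q_k\in[Q_k,2Q_k]$ and will have side $\ell_k=Q_k^{-(1+\eta)}/H_k$. Given a level-$k$ cube $J_k$ with center $\bm r_k$, its children are obtained by taking all rationals $\bm a/b$, $b\in[Q_{k+1},2Q_{k+1}]$, lying in $J_k$ inside the cone of half-angle $\theta_k$ about direction $\bm u_{j(k)}$ with vertex $\bm r_k$ and at distance $\asymp\ell_k$ from $\bm r_k$ --- there are $\asymp\theta_k^{d-1}\ell_k^dQ_{k+1}^{d+1}$ of these, roughly equidistributed --- then passing to a $\asymp Q_{k+1}^{-(d+1)/d}$-separated subfamily and placing an $\ell_{k+1}$-cube about each (this is consistent because $\eta>1/d$ forces $\ell_{k+1}\ll Q_{k+1}^{-(d+1)/d}$). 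If $\balph\in C:=\bigcap_kJ_k$ then $\|q_k\balph\|=q_k|\balph-\bm r_k|\asymp q_k\ell_k\asymp Q_k^{-\eta}/H_k$, so $q_k^\eta\|q_k\balph\|\asymp 1/H_k\to0$; and since $\balph$ lies in the child cube, whose center is in the prescribed cone at distance $\asymp\ell_k$ from $\bm r_k$ and within $O(\ell_{k+1})$ of $\balph$, the direction of $\bm\theta_k=q_k(\balph-\bm r_k)$ is within $\theta_k+O(\ell_{k+1}/\ell_k)\to0$ of $\bm u_{j(k)}$. As the schedule repeats every $\bm u_j$ and $(\bm u_j)$ is dense, the hypotheses of the previous paragraph are met, so $C\subseteq\mc D(\eta)$.

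Finally one must bound $\dim_H(C)$ from below, and this is the technical heart. I would run the mass distribution principle with the uniform measure $\mu$ assigning mass $1/M_k$ to each level-$k$ cube, $M_k$ their number. Because the basic cubes (side $\ell_k\asymp Q_k^{-(1+\eta)}$) are much smaller than the separation between siblings ($\asymp Q_k^{-(d+1)/d}$), the naive ``children times separation'' dimension formula is not sharp, and one estimates $\mu(B(\bm x,r))$ directly over the ranges $\ell_{k+1}\le r<\ell_k$; every case reduces to the single family of inequalities $M_k\gtrsim\ell_k^{-s}\theta_{k+1}^{-(d-1)}$. Taking logarithms, $\log M_k=\sum_{i\le k}\log m_i$ with $\log m_i\sim(d+1)\log Q_i-d(1+\eta)\log Q_{i-1}$, hence $\log M_k\sim\frac{(d+1)\rho-d(1+\eta)}{\rho-1}\log Q_k$, while $-\log\ell_k\sim(1+\eta)\log Q_k$ and the $\theta_k,H_k$ contributions are subexponential in $k$, so the inequalities hold for all large $k$ (the finitely many remaining ones absorbed by taking $Q_0$ large) provided
\[s\le\frac{(d+1)\rho-d(1+\eta)}{(\rho-1)(1+\eta)}.\]
The right-hand side is increasing in $\rho$ --- this is exactly where $d\eta>1$ is used --- and tends to $\frac{d+1}{1+\eta}$ as $\rho\to\infty$, so taking $\rho$ large yields $\dim_H(C)\ge\frac{d+1}{1+\eta}-\epsilon$. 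I expect this last step to be the main obstacle: carrying out the mass distribution estimate carefully for such an unbalanced Cantor set, and verifying that the directional restriction (the cones) and the sharpened accuracy (the factor $H_k$) together discard only a dimensionally invisible, subexponential proportion of the cubes at each stage.
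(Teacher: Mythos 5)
Your argument is correct, and its dimension-theoretic machinery is the same as the paper's (upper bound from $\mc{D}(\eta)\subseteq\mc{W}(\eta)$ and Jarn\'{i}k--Besicovitch; lower bound via a Cantor set of cubes of side $\asymp Q_k^{-(1+\eta)}$ centered at well-separated rationals of denominator at most $Q_k$, rapidly growing $Q_k$, and the mass distribution principle), but the mechanism by which you force $\mc{A}_\eta(\balph)=\R^d$ is genuinely different. The paper enumerates $\Q^d$ with every rational recurring infinitely often and shifts the level-$k$ cube centers to $\bm{\beta}_i-\bm{\gamma}_{i_k}/q_i^{1+\eta}$ with radius $1/(Q_k^{1+\eta}\log Q_k)$, so membership directly witnesses \eqref{eqn.Thm3AccumPtCond} and closedness of $\mc{A}_\eta(\balph)$ finishes; you instead control only the direction of the error $\bm{\theta}_k=q_k\balph-\bm{p}_k$ (shrinking cones about a dense set of directions) together with $q_k^\eta\|q_k\balph\|\to 0$, and recover all magnitudes by passing to multiples $(mq_k,m\bm{p}_k)$, whose normalized errors $m^{1+\eta}q_k^\eta\bm{\theta}_k$ sweep each ray with multiplicative gaps tending to $1$. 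This is a clean alternative reduction with no bookkeeping of rational targets; like the paper's shift and $1/\log q$ accuracy, your cone restriction and the factor $H_k$ discard only a subexponential proportion of cubes per level, so both computations yield the exponent $(d+1)/(1+\eta)$ (yours as $\rho\to\infty$, the paper's by letting each $Q_k$ grow fast enough that lower-order terms vanish). Two points should be tightened rather than repaired: first, the assertion that each cone--annulus region contains $\asymp\theta_k^{d-1}\ell_k^dQ_{k+1}^{d+1}$ rationals of denominator $\asymp Q_{k+1}$ from which a $Q_{k+1}^{-(d+1)/d}$-separated subfamily of comparable size can be extracted is exactly the nontrivial counting input: counting fractions $\bm{a}/b$ overcounts low-denominator points and the separated extraction is not automatic, so route this through Lemma \ref{lem.RatPtCount} applied to a cube inside the region, using the exclusion set $U$ to remove small denominators as the paper does with $U_r$ (the extra factor of $\theta_k$ lost is harmless, and the lower bound $b\ge Q_{k+1}$ is not actually needed for your estimates); second, to conclude that $\bm{\gamma}$ is an accumulation point and not merely a limit of points of $\NA$ possibly equal to $\bm{\gamma}$, note that the approximants $m^{1+\eta}q_k^\eta\bm{\theta}_k$ have strictly increasing magnitudes in $m$, so they can be chosen distinct from $\bm{\gamma}$ --- a one-line fix playing the role of the strict inequality in \eqref{eqn.Thm3AccumPtCond}.
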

	Although there are more detailed questions which can be asked about the measure and dimension theoretic properties of the sets $\mc{D}(\eta)$ and their complements, Theorems \ref{thm.LebMeasDens}-\ref{thm.JarBesDens} at least bring our state of knowledge roughly in line with what is known about the classical sets $\mc{W}(\eta)$.

	\subsection{Algebraic orbits}
	Suppose we choose $d\in\N, \eta>0$, and $\balph\in K_d$ with $\balph\notin \mc{D}(\eta)$. In general, there does not seem to be a reason to expect that the (non-dense) collection of accumulation points $\mc{A}_\eta(\balph)$ should possess any discernible geometric structure. However, in order to actually construct examples of such $\balph$, one with a modest amount of experience is likely to be led first of all to take $\eta=1/d$ and to choose $\balph\in\R^d$ so that $1,\alpha_1,\ldots,\alpha_d$ form a $\Q$-basis for an algebraic number field of degree $d+1$. This is in part because, by classical work of Perron \cite{Perr1921}, such $\balph$ are always badly approximable, and therefore are elements of $K_d\cap \mc{D}(1/d)^c$.

	Figure \ref{fig.EllsHyps} shows the surprising results of plotting $(1/2)$-normalized approximations to pairs of cubic numbers of the type just described. We remark that it is a nontrivial task to generate interesting plots of normalized approximations in these cases. The integers which give rise to normalized approximations lying in a bounded region grow geometrically, so exhaustive searches do not yield much data. Instead, the figures we present here, which show a proper subset of the normalized approximations with $q$ and $|\bm{p}|$ up to some bound, were generated using the results developed in Section \ref{sec.NormAlg} below. Even so, the computations to produce the plots required a significant amount of precision in the calculations. As we will see, the details of these plots reveal features of the underlying number fields. For example, observe that there is a `missing' ellipse in the 7th ring of the plot on the left. This is a manifestation of the fact that $7$ is an inert prime in the ring of integers of $\Q(2^{1/3})$, which implies that there is no element of norm 7 in the ring.
	
	\begin{figure}[h]
		\caption{$(1/2)$-normalized approximations to $\balph=(2^{1/3},2^{2/3})$ (left) and to $\balph=(\alpha,\alpha^2)$, where $\alpha$ is the positive root of $x^3+x^2-2x-1$ (right).\\}\label{fig.EllsHyps}
		\centering
		\includegraphics[width=\textwidth]{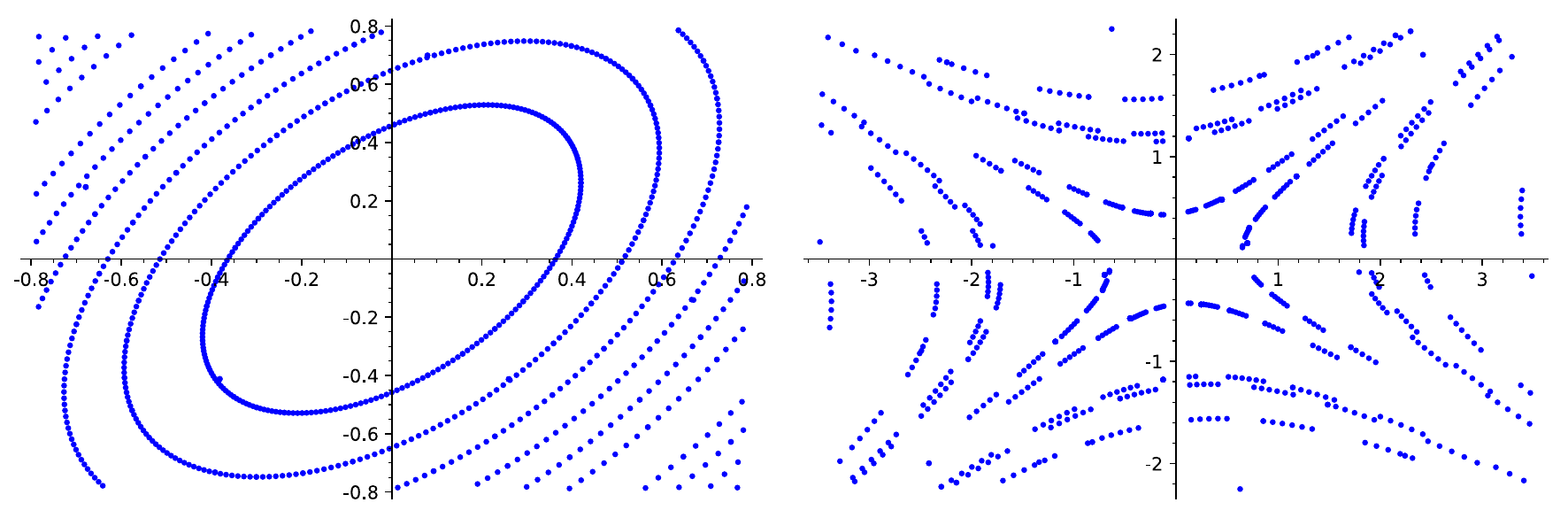}
	\end{figure}

	As a first explanation for these types of plots, results of Adams \cite{Adam1969a,Adam1969b,Adam1971}, Chevallier \cite{Chev2002}, Ito and Yasutomi \cite{ItoYasu2006}, and  Hensley \cite[Chapter 7]{Hens2006} have together demonstrated that if $1,\alpha_1,\ldots,\alpha_d$ form a basis for an algebraic number field then the collection $\mathrm{NA}_{1/d}(\balph)$ of ($1/d$)-normalized approximations to $\balph$ lie near specific curves and surfaces in $\R^d$ (see Theorem 7.3 and Section 7.8 in \cite{Hens2006}). In the second part of this paper we will give a complete description of the sets of accumulation points $\mc{A}_{1/d}(\balph)$ of normalized approximations, in the algebraic setting just described, when $d=1$ and $2$. First of all, we have the following theorem when $d=1$.
	\begin{theorem}\label{thm.QuadratApps}
		Suppose that $\alpha=a+b\sqrt{D}\in\R$, with $a,b\in\Q$, $b\not=0$, and $D\ge 2$ a squarefree integer, and let $K=\Q(\alpha)$. Then 	
		\[\mc{A}_1(\alpha)=\left\{\pm 2b\sqrt{D}\phantom{\cdot}\NN(s) : s\in\Lambda^*\setminus\{0\}\right\}\]
		where $\Lambda^*$ is the lattice in $K$ dual to \[\Lambda=\Z+\alpha\Z,\]
		with respect to the trace form.
	\end{theorem}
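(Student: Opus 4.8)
First I would introduce the nontrivial automorphism $\xi\mapsto\bar\xi$ of $K=\Q(\sqrt D)$, so that $\overline{a+b\sqrt D}=a-b\sqrt D$, $\alpha-\bar\alpha=2b\sqrt D$, $\NN(\xi)=\xi\bar\xi$ and $\Tr(\xi)=\xi+\bar\xi$, and I would translate everything into $\Lambda=\Z+\alpha\Z\subseteq K$. The computational heart is a single identity: for $q\in\Z\setminus\{0\}$ and $p\in\Z$ put $\beta=q\alpha-p\in\Lambda$; since $\bar\beta=(qa-p)-qb\sqrt D=\beta-2qb\sqrt D$, one gets $\NN(\beta)=\beta\bar\beta=\beta^2-2qb\sqrt D\,\beta$, hence
\[ |q|(q\alpha-p)=\operatorname{sgn}(q)\,\frac{\beta^2-\NN(\beta)}{2b\sqrt D}. \]
I would also record two elementary facts. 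Clearing denominators of $2a$ and $a^2-b^2D$ shows $\NN(\Lambda)=\{m^2+2amn+(a^2-b^2D)n^2:m,n\in\Z\}\subseteq\tfrac1N\Z$ for some $N\in\N$, so $\NN(\Lambda)$ is discrete in $\R$; and $\NN(\beta)=0$ forces $\beta=0$ since $1,\alpha$ are $\Q$-linearly independent. Finally, computing the basis dual to $\{1,\alpha\}$ for the trace form gives $\Lambda^*=\tfrac{1}{2b\sqrt D}\,\overline{\Lambda}$, and a one-line computation ($\NN(\bar\lambda/(2b\sqrt D))=-\NN(\lambda)/(4b^2D)$) shows that the set on the right-hand side of the theorem equals $T:=\{\pm\NN(\lambda)/(2b\sqrt D):\lambda\in\Lambda\setminus\{0\}\}$. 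It remains to prove $\mc{A}_1(\alpha)=T$.

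For the inclusion $\mc{A}_1(\alpha)\subseteq T$, I would take $\gamma\in\mc{A}_1(\alpha)$ and, using neighborhoods of radius $1/k$, a sequence $(q_k,p_k)$ with $q_k\neq0$, $\beta_k:=q_k\alpha-p_k$, $|q_k|\beta_k\to\gamma$ and $|q_k|\beta_k\neq\gamma$. If $|q_k|$ does not tend to infinity, then along a subsequence the $q_k$ take finitely many values and $|q_k|\beta_k$ lies in a finite union of arithmetic progressions of gap $\ge1$, hence in a set with no accumulation points; a convergent sequence in such a set is eventually constant, forcing $|q_k|\beta_k$ to be eventually $\gamma$ — a contradiction. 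So $|q_k|\to\infty$, whence $\beta_k=(|q_k|\beta_k)/|q_k|\to0$. Passing to a subsequence with $\operatorname{sgn}(q_k)=\epsilon$ constant, the displayed identity gives $\NN(\beta_k)=\beta_k^2-2b\sqrt D\,\epsilon\,|q_k|\beta_k\to-2b\sqrt D\,\epsilon\gamma$; a convergent sequence in the discrete set $\NN(\Lambda)$ is eventually a constant $n$, and $n\neq0$ by the second fact since $\beta_k\neq0$. Hence $\gamma=-\epsilon n/(2b\sqrt D)\in T$.

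For $T\subseteq\mc{A}_1(\alpha)$, I would fix $\lambda_0\in\Lambda\setminus\{0\}$, let $\mc{O}=\{x\in K:x\Lambda\subseteq\Lambda\}$ (an order in the real quadratic field $K$), and, by Dirichlet's unit theorem, let $\varepsilon>1$ be a fundamental unit of $\mc{O}$. With $u_k:=\varepsilon^{-2k}\in\mc{O}^\times$ (so $\NN(u_k)=1$) and $\beta_k:=u_k\lambda_0\in\Lambda$ (since $u_k\Lambda=\Lambda$), we have $\NN(\beta_k)=\NN(\lambda_0)=:n\neq0$ and $\beta_k\to0$ because $|u_k|=\varepsilon^{-2k}\to0$, while $\overline{u_k}=(\bar\varepsilon^{-1})^{2k}>0$ with $|\overline{u_k}|=\varepsilon^{2k}\to\infty$, so $\bar\beta_k$ has constant sign and $|\bar\beta_k|\to\infty$. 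Writing $\beta_k=q_k\alpha-p_k$, the relation $2q_kb\sqrt D=\beta_k-\bar\beta_k$ then shows $|q_k|\to\infty$ and $\operatorname{sgn}(q_k)$ is eventually a constant $\epsilon$, so by the displayed identity $|q_k|\beta_k=\epsilon(\beta_k^2-n)/(2b\sqrt D)\to-\epsilon n/(2b\sqrt D)$, with $|q_k|\beta_k\neq-\epsilon n/(2b\sqrt D)$ for every $k$ (otherwise $\beta_k=0$); thus $-\epsilon n/(2b\sqrt D)\in\mc{A}_1(\alpha)$. Since $\mathrm{NA}_1(\alpha)=-\mathrm{NA}_1(\alpha)$ via $(q,p)\mapsto(-q,-p)$, also $+\epsilon n/(2b\sqrt D)\in\mc{A}_1(\alpha)$, so $\pm n/(2b\sqrt D)\in\mc{A}_1(\alpha)$. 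Letting $\lambda_0$ range over $\Lambda\setminus\{0\}$ gives $T\subseteq\mc{A}_1(\alpha)$.

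I expect the main obstacle to be the interplay in the two facts above: the identity for $|q|(q\alpha-p)$ together with the discreteness and nonvanishing of $\NN(\Lambda)$ is what forces an accumulation point to arise from an \emph{eventually constant} norm, and the reverse inclusion needs Dirichlet's unit theorem for the possibly non-maximal order $\mc{O}$ in order to produce, for each attainable norm $n$, elements $\beta_k\to0$ whose conjugates escape to infinity. The remaining work is bookkeeping — tracking the sign of $b$, the sign of $q_k$, and the ambient ambiguity $\pm$ — together with checking that the constructed sequences accumulate at the claimed points rather than being eventually constant.
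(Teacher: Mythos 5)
Your proposal is correct, and it reaches the theorem by a genuinely different route than the paper. The paper deduces Theorem \ref{thm.QuadratApps} from the general machinery of Section \ref{sec.NormAlg}: Lemma \ref{lem.NormAppChar1} funnels every good approximation into the form $su$ with $s$ in a finite set $S_C\subseteq\Lambda^*$ and $u$ a dominant unit, and Lemma \ref{lem.NormAppChar2} shows $q_u(q_u\alpha-p_u)=\pm 2b\sqrt{D}\,\gamma_u$ with $\gamma_u\to|\NN(s)|$ but never equal to it, after which symmetry of $\mc{A}_1(\alpha)$ about $0$ finishes both inclusions. You instead work entirely inside $\Lambda=\Z+\alpha\Z$: the identity $|q|(q\alpha-p)=\mathrm{sgn}(q)\bigl(\beta^2-\NN(\beta)\bigr)/(2b\sqrt{D})$ with $\beta=q\alpha-p$, combined with $\NN(\Lambda)\subseteq\tfrac1N\Z$ and $\NN(\beta)\neq0$ for $\beta\neq0$, gives the inclusion $\mc{A}_1(\alpha)\subseteq T$ with no unit decomposition at all (your eventually-constant-norm argument, including the bounded-$|q_k|$ exclusion, is sound, and it also shows $0\notin\mc{A}_1(\alpha)$ without invoking Perron's badly-approximable result); for the reverse inclusion your explicit sequence $\beta_k=\varepsilon^{-2k}\lambda_0$, using a fundamental unit of the multiplier order $\{x\in K:x\Lambda\subseteq\Lambda\}=\mc{Z}_\Lambda$, is in essence the paper's dominant-unit construction made concrete, and the $\pm$ ambiguity is resolved by the same symmetry $(q,p)\mapsto(-q,-p)$ the paper uses. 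The bridge between the two formulations is your computation $\Lambda^*=\tfrac{1}{2b\sqrt{D}}\overline{\Lambda}$ and $\NN\bigl(\overline{\lambda}/(2b\sqrt{D})\bigr)=-\NN(\lambda)/(4b^2D)$, which is correct and shows your target set $T$ coincides with the set in the statement. What each approach buys: yours is the self-contained, elementary argument the paper explicitly anticipates could exist, while the paper's formulation via $S_C$, dominant units, and $M(\balph)$ is built to carry over verbatim to the cubic case of Theorem \ref{thm.CubicApps}, where no analogue of your one-line norm identity is available.
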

	In the statement of the theorem, $\NN(\cdot)$ denotes the $K/\Q$ norm. Definitions of `lattice,' `dual lattice,' and `trace form' are given in the next section. 
	
	It is conceivable that Theorem \ref{thm.QuadratApps} could have been observed in previous literature, since it is reasonable to expect that there are elementary ways to prove this theorem. However, with a view toward generalizing to higher dimensions, we will instead present a proof which is cast in the language of algebraic number theory. With this as a backdrop, we will then prove the following result for $d=2$.
	\begin{theorem}\label{thm.CubicApps}
		Suppose that $\bm{\alpha}\in\R^2$ and that $\{1,\alpha_1,\alpha_2\}$ is a basis for a cubic field $K/\Q$, and let $\Lambda^*$ be the lattice in $K$ dual to \[\Lambda=\Z+\alpha_1\Z+\alpha_2\Z,\] with respect to the trace form.
		\begin{itemize}[itemsep=5bp,parsep=10bp, topsep=0bp]
			\item If $K$ has a nontrivial complex embedding then there is an ellipse $\mc{C}_K\subset\R^2$ with the property that
			\[\mc{A}_{1/2}(\balph)=\bigcup_{s\in \Lambda^*\setminus\{0\}}\left|\NN (s)\right|^{1/2}\mc{C}_K.\]
			\item If $K$ is totally real then there is a hyperbola $\mc{C}_K^+\subset\R^2$ with the property that
			\[\mc{A}_{1/2}(\balph)=\bigcup_{s\in \Lambda^*\setminus\{0\}}\left|\NN (s)\right|^{1/2}\ell_{s}(\mc{C}_K^+),\]
			where for each $s\in \Lambda^*\setminus\{0\}$, the map $\ell_s:\R^2\rar\R^2$ is either the identity map, or a linear map taking $\mc{C}_K^+$ to its conjugate hyperbola.
		\end{itemize}
	\end{theorem}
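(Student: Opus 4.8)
The plan is to follow the strategy behind the proof of Theorem~\ref{thm.QuadratApps}, using the extra structure available in a cubic field. Write $\sigma_1=\mathrm{id},\sigma_2,\sigma_3$ for the embeddings $K\hookrightarrow\C$, so that $\sigma_3=\overline{\sigma_2}$ when $K$ has a nontrivial complex embedding and all three are real when $K$ is totally real, and let $\Tr=\sigma_1+\sigma_2+\sigma_3$. The key observation is that the trace form induces a perfect $\Z$-bilinear pairing between $\Lambda$ and $\Lambda^*$, so $s\mapsto(\Tr(s),\Tr(s\alpha_1),\Tr(s\alpha_2))$ is a bijection of $\Lambda^*$ onto $\Z^3$: every triple $(q,\bp)=(q,p_1,p_2)\in\Z^3$ comes from a \emph{unique} $s\in\Lambda^*$ with $q=\Tr(s)$ and $p_i=\Tr(s\alpha_i)$. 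Expanding $\Tr$ for such an $s$ gives the identity
\[q\alpha_i-p_i=\sigma_2(s)\bigl(\alpha_i-\sigma_2(\alpha_i)\bigr)+\sigma_3(s)\bigl(\alpha_i-\sigma_3(\alpha_i)\bigr),\qquad i=1,2,\]
together with $q=\sigma_1(s)+\sigma_2(s)+\sigma_3(s)$ and $\NN(s)=\sigma_1(s)\sigma_2(s)\sigma_3(s)$. Hence along any family of $s$ with $\sigma_2(s),\sigma_3(s)\rar 0$ we have $\sigma_1(s)=q(1+o(1))$, so the conjugate coordinates of $s$ govern the approximation error $q\balph-\bp$ while the real coordinate records $q$, and $q\,\sigma_2(s)\sigma_3(s)=\NN(s)(1+o(1))$.

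Next I would normalize. Put $\beta_i=\alpha_i-\sigma_2(\alpha_i)$, $\gamma_i=\alpha_i-\sigma_3(\alpha_i)$, and let $M$ be the linear map $(w_2,w_3)\mapsto(w_2\beta_1+w_3\gamma_1,\,w_2\beta_2+w_3\gamma_2)$ (in the complex case $w_3=\overline{w_2}$ and this is a real-linear map of $\C\cong\R^2$). A Vandermonde computation identifies $\det M$, up to sign, with $\det(\sigma_j(\omega_i))$ for $\omega_1=1,\omega_2=\alpha_1,\omega_3=\alpha_2$, which is nonzero precisely because $\{1,\alpha_1,\alpha_2\}$ is a $\Q$-basis of $K$; so $M$ is invertible. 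The identity above reads $|q|^{1/2}(q\balph-\bp)=M\bigl(|q|^{1/2}\sigma_2(s),\,|q|^{1/2}\sigma_3(s)\bigr)$, so along a sequence with $\sigma_2(s),\sigma_3(s)\rar 0$ and $|q|^{1/2}\sigma_j(s)$ convergent, the normalized approximation converges to $M$ of a point of the level set $\{w_2w_3=\NN(s)\cdot\mathrm{sign}(q)\}$. In the complex case $w_3=\overline{w_2}$, so after rescaling by $|\NN(s)|^{1/2}$ this set is a circle, and one sets $\mc{C}_K=M(\{|w|=1\})$, a genuine ellipse since $M$ is invertible. In the totally real case $\{w_2w_3=1\}$ and $\{w_2w_3=-1\}$ are a hyperbola and its conjugate; one sets $\mc{C}_K^+=M(\{w_2w_3=1\})$, and the linear reflection carrying $\{w_2w_3=1\}$ to $\{w_2w_3=-1\}$, conjugated by $M$, is the map playing the role of $\ell_s$, which for a given $s$ turns out to be the identity or this reflection according to the sign of $\sigma_2(s)\sigma_3(s)$.

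For the inclusion of $\mc{A}_{1/2}(\balph)$ in the asserted union, take $\bm\gamma\in\mc{A}_{1/2}(\balph)$, realized as $\lim_k|q_k|^{1/2}(q_k\balph-\bp_k)$ with $|q_k|\rar\infty$, and let $s_k\in\Lambda^*$ correspond to $(q_k,\bp_k)$. Since $q_k\balph-\bp_k\rar\bm{0}$ and $M$ is invertible, $\sigma_2(s_k),\sigma_3(s_k)\rar 0$, whence $\sigma_1(s_k)=q_k(1+o(1))$ and, because $\bigl(|q_k|^{1/2}\sigma_2(s_k),|q_k|^{1/2}\sigma_3(s_k)\bigr)=M^{-1}\bigl(|q_k|^{1/2}(q_k\balph-\bp_k)\bigr)$ converges, $\NN(s_k)=\sigma_1(s_k)\sigma_2(s_k)\sigma_3(s_k)$ converges (passing to a subsequence on which $\mathrm{sign}(q_k)$ is constant). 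Now $\Lambda^*$ is commensurable with the ring of integers $\mc{O}_K$, so $c\Lambda^*\subseteq\mc{O}_K$ for some $c\in\N$, whence $\NN(\Lambda^*)\subseteq c^{-3}\Z$ is discrete; thus $\NN(s_k)$ is eventually equal to a fixed nonzero value, which equals $\NN(s_0)$ for some $s_0\in\Lambda^*\setminus\{0\}$. Reading the identity at the limit then places $\bm\gamma$ on $|\NN(s_0)|^{1/2}\mc{C}_K$ in the complex case, and on $|\NN(s_0)|^{1/2}\ell_{s_0}(\mc{C}_K^+)$ in the totally real case, with $\ell_{s_0}$ determined by the sign of $\sigma_2(s_0)\sigma_3(s_0)$ as above.

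The reverse inclusion is the heart of the argument and the step I expect to be the main obstacle. Let $\mc{O}=\{x\in K:x\Lambda\subseteq\Lambda\}$ be the multiplier order of $\Lambda$; then $\mc{O}\Lambda^*\subseteq\Lambda^*$, so $\mc{O}^\times$ acts on $\Lambda^*\setminus\{0\}$ preserving $|\NN(\cdot)|$. Fix $s_0\in\Lambda^*\setminus\{0\}$ and a point of the relevant conic of radius $|\NN(s_0)|^{1/2}$; I want units $u_k$ with $|\sigma_1(u_ks_0)|\rar\infty$ — so that, by the first two paragraphs, the associated normalized approximations converge to a point of that conic — while the argument of $\sigma_2(u_ks_0)$ (complex case) or the ratio $\sigma_2(u_ks_0)/\sigma_3(u_ks_0)$ (real case) approaches the value selecting the prescribed point. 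In the complex case $\mc{O}^\times$ has rank $1$ by Dirichlet's unit theorem; a generator $\epsilon$ may be taken with $|\sigma_1(\epsilon)|>1$, so $|\sigma_2(\epsilon)|=|\sigma_1(\epsilon)|^{-1/2}<1$, and $\arg\sigma_2(\epsilon)$ is an irrational multiple of $\pi$ — otherwise a power of $\epsilon$ would satisfy $\sigma_2=\sigma_3$, hence lie in a proper subfield of $K$, forcing it to be $\pm1$ and contradicting $|\sigma_1(\epsilon)|>1$ — so Kronecker's equidistribution theorem makes $\{\arg\sigma_2(\epsilon^ns_0)\}_{n\ge1}$ dense while $|\sigma_1(\epsilon^ns_0)|\rar\infty$. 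In the totally real case $\mc{O}^\times$ has rank $2$; writing $L\subset\{x_1+x_2+x_3=0\}$ for its logarithmic image, the same proper-subfield argument shows the only units with $|\sigma_2|=|\sigma_3|$ are $\pm1$, so the functional $x_2-x_3$ is injective on $L$ and thus takes a dense set of values, while the linear map $(x_1,\,x_2-x_3)$ is injective on $\{x_1+x_2+x_3=0\}$; from these facts one extracts units $u$ with $\log|\sigma_1(u)|$ arbitrarily large and $\log|\sigma_2(u)/\sigma_3(u)|$ arbitrarily close to any prescribed value. In either case a suitable subsequence $u_ks_0$ produces an accumulation point on the target conic, and one invokes the central symmetry of the set of normalized approximations, together with the symmetry of $\Lambda^*$, to fill in both branches of each hyperbola and both choices of $\ell_s$; collecting these limits over all $s\in\Lambda^*\setminus\{0\}$ then completes the description of $\mc{A}_{1/2}(\balph)$.
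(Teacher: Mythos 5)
Your proposal is correct in substance and follows the same overall strategy as the paper: the trace-form duality identity $q\alpha_i-p_i=\sigma_2(s)(\alpha_i-\sigma_2(\alpha_i))+\sigma_3(s)(\alpha_i-\sigma_3(\alpha_i))$, the fixed invertible matrix $M$, the norm of $s$ controlling the dilation factor, and the unit-group density arguments (irrationality of $\arg\sigma_2(\epsilon)$ in the complex case; the fact that no unit other than $\pm1$ has $|\sigma_2|=|\sigma_3|$ in the totally real case) are exactly the paper's mechanism (Lemmas \ref{lem.NormAppChar1}, \ref{lem.NormAppChar2}, \ref{lem.TotRealUnitDist}). Where you differ is in organization, and your choices are defensible and in places cleaner: for the forward inclusion you bypass the paper's finite set $S_C$ of Lemma \ref{lem.NormAppChar1} entirely, using instead the bijection $\Lambda^*\leftrightarrow\Z^3$ plus discreteness of $\NN(\Lambda^*)\subseteq c^{-3}\Z$ to force the norms along a convergent sequence to stabilize; you prove invertibility of $M$ by a Vandermonde/discriminant computation rather than the paper's indirect rank argument; and in the totally real case you replace the paper's Blichfeldt-based Lemma \ref{lem.TotRealUnitDist} by the equivalent observation that the functional $x_2-x_3$ has trivial kernel on the logarithmic unit lattice, hence dense image, from which units with large $|\sigma_1(u)|$ and prescribed approximate ratio $|\sigma_2(u)/\sigma_3(u)|$ are extracted by a standard lattice argument. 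Two details should be tightened. First, to conclude that a limit point is a genuine accumulation point you must note (as the paper does via \eqref{eqn.GamEst}) that the normalized approximations never lie exactly on the limiting conic: here $|q|\,\sigma_2(su)\sigma_3(su)=\pm\NN(s)$ would force $\Tr(su)=\pm\sigma_1(su)$, i.e. $su\in\Q$, which is impossible, so the approximating points are distinct from the target. Second, in the totally real case the sign bookkeeping is slightly off as phrased: for a fixed $s$ only one map $\ell_s$ occurs in the union, so the unit extraction should be carried out inside the totally positive units (e.g. squares, a finite-index subgroup whose log-image retains the trivial-kernel property), which pins the constructed points onto $|\NN(s)|^{1/2}\ell_s(\mc{C}_K^+)$; accumulation points coming from units with $\sigma_2(u)\sigma_3(u)<0$ are not ``the other choice of $\ell_s$'' but must be re-attributed to $\tilde{s}=su\in\Lambda^*$, exactly as in the paper's closing argument, while central symmetry only supplies the second branch of the same hyperbola.
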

	We will see in the proof below that, in the totally real case of this theorem, the map $\ell_s$ is determined by whether or not the two real conjugates of $s$ have the same sign. In most cases, for each $s\in\Lambda^*\setminus\{0\}$ there will be another element $\tilde{s}\in\Lambda^*\setminus\{0\}$ with $\NN(s)=\NN(\tilde{s})$ and for which $\ell_s\not=\ell_{\tilde{s}}$. When this happens, the collection of accumulation points $\mc{A}_{1/2}(\balph)$ will be a union of dilates of a single pair of hyperbolas (as in the plot on the right in Figure \ref{fig.EllsHyps}). However, although it seems to be a rarer phenomenon, this can actually fail to happen. As an example, let $\balph=(\alpha,\alpha^2)$, where $\alpha$ is the smallest positive root of $x^3-19x+21$. The field $K=\Q(\alpha)$, which we discovered using the extensive data recorded in \cite{EnnoTuru1985}, has discriminant 15529. It is totally real and has the exotic feature that every positive unit in its ring of integers has only positive conjugates. This implies that if $s$ and $\tilde{s}$ are any elements of $\Lambda^*\setminus\{0\}$ with the same norm, then $\ell_s=\ell_{\tilde{s}}$. In other words, the points of $\mc{A}_{1/2}(\balph)$ are unions of scaled copies of each of the single hyperbolas in a pair of hyperbolas, and only one hyperbola from the pair appears for each possible norm. Although it is not trivial to discern this by casual observation, a careful examination of Figure \ref{fig.Hyps2} (comparing ratios of scaled hyperbolas with ratios of scaled conjugate hyperbolas) will reveal the behavior that we are describing.

	\begin{figure}[h]
	\caption{$(1/2)$-normalized approximations to $\balph=(\alpha,\alpha^2)$, where $\alpha$ is the smallest positive root of $x^3-19x+21$.\\}\label{fig.Hyps2}
	\centering
	\includegraphics[width=.6\textwidth]{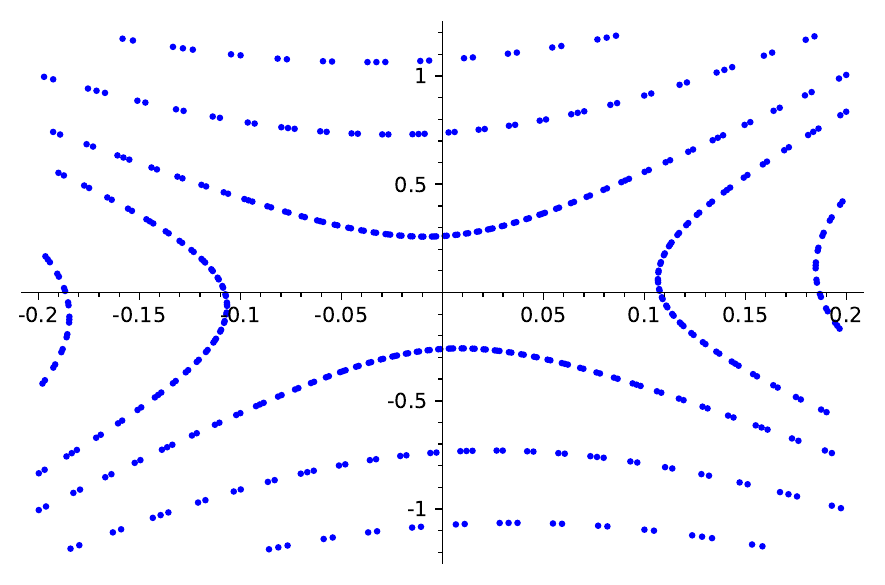}
\end{figure}

	Our proofs of Theorems \ref{thm.QuadratApps} and \ref{thm.CubicApps} will use a combination of ideas from the above mentioned authors who have worked on these types of problems, together with ideas from Cassels and Swinnerton-Dyer \cite{CassSwin1955} and Peck's work \cite{Peck1961} related to the Littlewood conjecture.
	
	The machinery which is going to be developed in this paper (in particular, in Section \ref{sec.NormAlg}) will provide us with a relatively nice characterization of the collection of $(1/d)$-approximations to $\balph\in\R^d$ which lie in some fixed bound, for any $d\in\N$, when the coordinates of $\balph$ together with 1 form a basis for a number field (see Lemmas \ref{lem.NormAppChar1} and \ref{lem.NormAppChar2}). Therefore it may be possible to extend our proofs to establish the obvious generalizations of Theorems \ref{thm.QuadratApps} and \ref{thm.CubicApps} for accumulation points of normalized approximations to algebraic numbers in dimensions $d\ge 3$. However, there appear to be some non-trivial details related to the distribution of units in the corresponding fields that would need to be worked out.
	
	This paper is structured as follows. In Section \ref{sec.Notation} we present notation and background material from Diophantine approximation, measure theory, Hausdorff dimension theory, and algebraic number theory. In Sections \ref{sec.ThmDens}-\ref{sec.JarBesThm} we present our proofs of Theorems \ref{thm.LebMeasDens}-\ref{thm.JarBesDens}. In Section \ref{sec.NormAlg} we establish a general framework for studying $(1/d)$-normalized approximation of algebraic numbers of the type described above. Finally, in Sections \ref{sec.QuadThm} and \ref{sec.CubicThm} we prove Theorems \ref{thm.QuadratApps} and \ref{thm.CubicApps}.
	
	\noindent\textbf{Acknowledgements:} We would like to thank Shigeki Akiyama, Pierre Arnoux, Val\'{e}rie Berth\'{e}, and Lenny Fukshansky for useful advice and comments. We also thank the Mathematisches Forschungsinstitut Oberwolfach for their hospitality during part of our work on this project.
	
	\section{Notation and preliminary results}\label{sec.Notation}
	
	\noindent \textbf{Notation:} We denote vectors in $\R^d$ using bold face, and their components with appropriate subscripts. For convenience in typesetting, column vectors are often denoted as transposes of row vectors. For $\bm{x}\in\R^d$, we set
	\[|\bm{x}|=\max\left\{|x_1|,\ldots ,|x_d|\right\}\quad\text{ and }\quad \|\bm{x}\|=\min_{\bm{p}\in\Z^d}\left|\bm{x}-\bm{p}\right|.\]
	For $\bm{x},\bm{y}\in\R^d$, we write $\bm{x}\bm{y}$ for the standard Euclidean inner product of $\bm{x}$ and $\bm{y}$. When working in $\R^d$, the symbol $\lambda$ denotes $d$-dimensional Lebesgue measure. For $x\in\R$ and $\delta>0$, the symbol $B(x,\delta)$ denotes the open ball of radius $\delta$ centered at $x$. We use $\ll$ and $\gg$ for the standard Vinogradov notation.  For any set $A\subseteq\R^d$, we write $\dim_H(A)$ for the Hausdorff dimension of $A$. If $A\subseteq\R^d$ and $\delta>0$, we say that $A$ is \textit{$\delta$-dense} in $\R^d$ if, for every $\bm{x}\in\R^d$, there exists a point $\bm{y}\in A$ with $|\bm{x}-\bm{y}|\le \delta$. We say that $A\subseteq\R^d$ is \textit{uniformly discrete} if there exists an $r>0$ such that $|\bm{x}-\bm{y}|>r$ for all distinct points $\bm{x},\bm{y}\in A.$
	
	For convenience we also collect together here the following notation from the introduction. For $\balph\in\R^d$ and $\eta\in\R$, the set of $\eta$-\textit{normalized approximations} to $\balph$ is
	\[\mathrm{NA}_\eta(\balph)=\{|q|^{\eta}\left(q\bm{\alpha}-\bm{p}\right):q\in\Z\setminus\{0\},\bp\in\Z^d\}.\]
	The collection of all accumulation points of $\mathrm{NA}_\eta(\balph)$ is denoted by $\mc{A}_\eta(\balph).$ The set of $\eta$-\textit{well approximable points} is defined by
	\[\mc{W}(\eta)=\{\balph\in\R^d:\bm{0}\in\mc{A}_\eta(\balph)\}.\]
	The set $\mc{W}(1/d)^c$ is the set of \textit{badly approximable points} in $\R^d$ (they are the points $\balph$ which satisfy \eqref{eqn.BadDef}). The set $\mc{D}(\eta)$ is defined by
	\[\mc{D}(\eta)=\{\balph\in\R^d:\mc{A}_\eta(\balph)=\R^d\}.\]
	Finally, we use $K_d$ to denote the set of points $\balph\in\R^d$ for which $1,\alpha_1,\ldots, \alpha_d$ are $\Q$-linearly independent.

	\subsection{Diophantine approximation} There are several results from Diophantine approximation which will be used in the proofs of our main results. The first is the following special case of the Khintchine-Groshev theorem.
	\begin{theorem}\label{thm.KhinGros}
		Suppose $d\in\N$, that $\psi:\N\rar (0,\infty)$ is a decreasing function, and that
		\begin{equation}\label{eqn.KGSum}
			\sum_{q=1}^{\infty}(q\psi (q))^d=\infty.
		\end{equation}
		Then, for almost every $\bm{\alpha}\in\R^d$, there are infinitely many $q\in\N$ which satisfy the inequality
		\begin{equation}\label{eqn.KGIneq}
			\|q\bm{\alpha}\|\le q\psi(q).
		\end{equation}
		If the sum in \eqref{eqn.KGSum} converges then, for almost every $\bm{\alpha}\in\R^d$, there are only finitely many $q\in\N$ which satisfy \eqref{eqn.KGIneq}.
	\end{theorem}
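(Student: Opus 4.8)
The plan is to handle the two assertions of the theorem separately. Since the condition $\|q\balph\|\le q\psi(q)$ depends only on $\balph$ modulo $\Z^d$, I would work on the torus $\R^d/\Z^d$ and study, for $q\in\N$, the set
\[
A_q \;=\; \big\{\,\balph\in\R^d/\Z^d : \|q\balph\|\le q\psi(q)\,\big\},
\]
which is a union of $q^d$ sup-norm balls of radius $\psi(q)$, one centred at each $q$-torsion point of the torus. We may assume $q\psi(q)<1/2$ for all large $q$: otherwise $A_q$ is the whole torus for infinitely many $q$, which makes the divergence conclusion immediate and contradicts convergence of the sum in \eqref{eqn.KGSum}. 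Under this assumption the constituent balls are pairwise disjoint, so $\lambda(A_q)\asymp_d (q\psi(q))^d$. If the sum in \eqref{eqn.KGSum} converges then $\sum_q\lambda(A_q)<\infty$, and the first Borel--Cantelli lemma gives $\lambda(\limsup_q A_q)=0$; that is, almost every $\balph$ lies in only finitely many $A_q$, which is the second assertion.

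\textbf{The divergence half.} When the sum diverges we have $\sum_q\lambda(A_q)=\infty$, and I would prove $\lambda(\limsup_q A_q)=1$ in two moves. First, a zero--one law for $\limsup_q A_q$ --- for which one may invoke the higher-dimensional Cassels-type lemma (Lemma~\ref{lem.CassHighD}), or Gallagher's theorem --- shows that $\lambda(\limsup_q A_q)\in\{0,1\}$, so it is enough to prove positivity. Second, I would apply the divergence form of the Borel--Cantelli lemma (the Chung--Erd\H{o}s inequality),
\[
\lambda\Big(\limsup_q A_q\Big)\;\ge\;\limsup_{Q\to\infty}\frac{\big(\sum_{q\le Q}\lambda(A_q)\big)^2}{\sum_{p,q\le Q}\lambda(A_p\cap A_q)},
\]
together with the quasi-independence on average estimate $\sum_{p,q\le Q}\lambda(A_p\cap A_q)\ll_d\big(\sum_{q\le Q}\lambda(A_q)\big)^2+\sum_{q\le Q}\lambda(A_q)$. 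Since $\sum_q\lambda(A_q)$ diverges, the additive term on the right is eventually negligible against its square, so the two displayed inequalities give $\lambda(\limsup_q A_q)>0$, hence $=1$ --- this is the first assertion.

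\textbf{The overlap estimate and the main obstacle.} Everything thus comes down to the overlap estimate quoted above, which is the genuinely delicate step. To bound $\lambda(A_p\cap A_q)$ for $p\ne q$ one asks when a ball of $A_p$ centred at $\bx/p$ (with $\bx\in\Z^d$) can meet a ball of $A_q$ centred at $\by/q$ (with $\by\in\Z^d$): this requires $|q\bx-p\by|\le pq(\psi(p)+\psi(q))$, and $q\bx-p\by\in\Z^d$. When the right-hand side is $<1$ this forces $q\bx=p\by$, i.e.\ $\bx/p=\by/q$, so the two balls are concentric and the number of such common centres in the torus is $\gcd(p,q)^d$; when it is $\ge 1$ one instead bounds the number of points of $\tfrac1q\Z^d$ lying near $\tfrac1p\Z^d$ using equidistribution at the relevant scale. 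In both ranges one obtains an estimate whose leading term is $\ll_d\lambda(A_p)\lambda(A_q)$, and the full strength of the monotonicity of $\psi$ is needed to show that the residual contribution --- coming from pairs $p,q$ with an unusually large common factor --- sums over $p,q\le Q$ to only $\ll_d\sum_{q\le Q}\lambda(A_q)$, rather than overwhelming the square of the linear sum. This is precisely where the argument would break down without monotonicity (for $d=1$ this is the Duffin--Schaeffer phenomenon), though for $d\ge 2$ the overlap estimates are considerably more forgiving.
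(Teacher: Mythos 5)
The paper does not prove this statement at all: Theorem \ref{thm.KhinGros} is quoted as a classical result (Khintchine--Groshev), with a pointer to \cite[Section 12.1]{BereDickVela2006} for a more general form, so there is no in-paper argument to compare yours against. Judged on its own terms, your outline follows the standard route: the convergence half via the first Borel--Cantelli lemma is complete and correct (the reduction to $q\psi(q)<1/2$ and the measure computation $\lambda(A_q)\asymp_d(q\psi(q))^d$ are fine), and the architecture of the divergence half (a zero--one law, e.g.\ Cassels/Gallagher, plus the Chung--Erd\H{o}s inequality and quasi-independence on average) is the right one.

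The genuine gap is that the quasi-independence estimate
\[
\sum_{p,q\le Q}\lambda(A_p\cap A_q)\;\ll_d\;\Bigl(\sum_{q\le Q}\lambda(A_q)\Bigr)^2+\sum_{q\le Q}\lambda(A_q)
\]
is asserted, not proved, and it carries essentially all of the analytic content of the divergence half. Your sketch correctly identifies when two balls can meet ($|q\bx-p\by|\le pq(\psi(p)+\psi(q))$ with $q\bx-p\by\in\Z^d$) and that common centres are governed by $\gcd(p,q)$, but the actual work is to count the near-coincidences when $pq(\psi(p)+\psi(q))\ge1$, to bound $\lambda(A_p\cap A_q)$ by $\lambda(A_p)\lambda(A_q)$ plus an explicit error involving $\gcd(p,q)$ and $\min(\psi(p),\psi(q))$, and then to show that this error summed over $p,q\le Q$ is $\ll\sum_{q\le Q}\lambda(A_q)$. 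For $d=1$ that summation is exactly where monotonicity must be deployed (e.g.\ via a rearrangement or a reduction to a standardized $\psi$), and naive gcd bookkeeping can genuinely fail there --- as you yourself note, this is the Duffin--Schaeffer phenomenon --- so saying that ``the full strength of monotonicity is needed'' describes the obstacle rather than overcoming it. For $d\ge2$ the overlap sums are indeed more forgiving (monotonicity can even be dropped), but that case too requires the computation to be carried out. As it stands, the proposal is a correct and well-organized plan whose decisive step is missing; the paper sidesteps all of this by citing the theorem.
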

	A more general form of this theorem for systems of linear forms can be found in \cite[Section 12.1]{BereDickVela2006}. Next, we will make use of two results of Cassels which are examples of \textit{transference principles} in Diophantine approximation (see \cite[Chapter 5]{Cass1957}). The first of these demonstrates that, roughly speaking, if a point $\balph\in\R^d$ is not too well approximable by rationals, then its orbit modulo $\Z^d$ must become dense in $\R^d/\Z^d$ at a certain rate.
	\begin{lemma}\label{lem.TransPrinc1}
		Suppose that $\balph\in\R^d, \delta>0,$ and $Q\in [1,\infty)$. If 
		\[\min_{0< q\le Q}\|q\balph\|\ge \delta\]
		then the set of points
		\[\left\{q\balph+\bm{p}:|q|\le Q',\bm{p}\in\Z^d\right\}\]
		is $\varrho$-dense in $\R^d,$ where
		\[Q'=\max\left\{Q,~\frac{1}{\delta^{d}}\right\}\quad\text{and}\quad\varrho=\max\left\{\delta,~\frac{1}{Q\delta^{d-1}}\right\}.\]	
	\end{lemma}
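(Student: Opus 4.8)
This is a transference statement, and I would prove it using the geometry of numbers, in the spirit of the classical transference theorems of \cite[Chapter 5]{Cass1957}. Unwinding the definition of $\varrho$-density, the conclusion says precisely that for every $\bm{\gamma}\in\R^d$ there exist $q\in\Z$ with $|q|\le Q'$ and $\bm{p}\in\Z^d$ such that $|q\balph-\bm{p}-\bm{\gamma}|\le\varrho$. It is convenient to record first that the hypothesis forces $Q\le\delta^{-d}$: by Dirichlet's theorem there is an integer $q$ with $1\le q\le Q$ and $\|q\balph\|\le Q^{-1/d}$, and the hypothesis $\|q\balph\|\ge\delta$ then gives $\delta\le Q^{-1/d}$ (and in particular $\delta\le 1/2$, since $\|\cdot\|\le 1/2$ always). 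Consequently $Q'=\delta^{-d}$ and $\varrho=(Q\delta^{d-1})^{-1}\ge\delta$, and I may work with these cleaner expressions.

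Next I would lift the problem to the unimodular lattice
\[\Lambda=\left\{\left(q,\ q\balph-\bm{p}\right)\in\R^{d+1}:q\in\Z,\ \bm{p}\in\Z^d\right\},\qquad \det\Lambda=1,\]
observing that a point of $\Lambda$ lying in the translated box $(0,\bm{\gamma})+\bigl([-Q',Q']\times[-\varrho,\varrho]^d\bigr)$ is exactly a solution of the required inequalities. The hypothesis becomes a statement about the symmetric box $\mc{C}=[-Q,Q]\times[-\delta,\delta]^d$: any nonzero $(q,\bm{v})\in\Lambda$ with $0<|q|\le Q$ has $|\bm{v}|\ge\|q\balph\|\ge\delta$, while any nonzero point with $q=0$ has $|\bm{v}|\ge 1>\delta$; thus the interior of $\mc{C}$ contains no nonzero point of $\Lambda$, so the first successive minimum (with respect to $\mc{C}$) satisfies $\lambda_1(\Lambda,\mc{C})\ge 1$. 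Applying Minkowski's second theorem to $\Lambda$ and $\mc{C}$, whose volume is $2^{d+1}Q\delta^d$, bounds the product of the successive minima $\lambda_1\le\cdots\le\lambda_{d+1}$ by $2^{d+1}/\mr{vol}(\mc{C})=(Q\delta^d)^{-1}$. Combined with $\lambda_i\ge\lambda_1\ge 1$ this reconfirms $Q\delta^d\le 1$, gives $\lambda_i\le(Q\delta^d)^{-1}$ for every $i$, and (optimizing the sum subject to $\prod_i\lambda_i\le(Q\delta^d)^{-1}$ and $\lambda_i\ge 1$) yields $\sum_i\lambda_i\le d+(Q\delta^d)^{-1}$.

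The final step is to pass from the successive minima to the inhomogeneous problem using the classical bound $\mu(\Lambda,\mc{C})\le\tfrac12\sum_i\lambda_i$ for the covering radius. This shows that every translate of the dilate $t\mc{C}$, with $t=\tfrac12\sum_i\lambda_i\le\tfrac12\bigl(d+(Q\delta^d)^{-1}\bigr)$, contains a point of $\Lambda$; since $t\mc{C}$ has half-width $tQ$ in the first coordinate and $t\delta$ in the remaining ones, one checks directly (using $Q\le\delta^{-d}$) that $t\mc{C}\subseteq[-Q',Q']\times[-\varrho,\varrho]^d$ when $d=1$, which proves the lemma in that case.

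The main obstacle is the sharp constants when $d\ge 2$. The covering-radius estimate overshoots the stated $Q'$ and $\varrho$ by a factor of order $d$ precisely in the regime where $Q$ is comparable to $\delta^{-d}$ — which is exactly the regime in which the lemma is strongest, asserting in effect that an orbit $\{q\balph\}$ which remains $\delta$-separated up to time $\delta^{-d}$ is already essentially $\delta$-dense. To remove this dimensional loss and obtain the exact values $Q'=\max\{Q,\delta^{-d}\}$ and $\varrho=\max\{\delta,(Q\delta^{d-1})^{-1}\}$, I would appeal to the refined transference estimates of Cassels \cite[Chapter 5]{Cass1957}, which produce the approximating denominator through a more careful iterative construction rather than through a single covering-radius bound. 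For the measure- and dimension-theoretic applications in this paper, however, the weaker version with an implied constant depending only on $d$ — which the argument above already supplies — would suffice.
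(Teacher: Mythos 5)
The paper does not give an argument for this lemma at all: its entire proof is the remark that the statement ``follows directly from \cite[Chapter 5, Theorem VI]{Cass1957}.'' Measured against that, your proposal is in an awkward position. The self-contained part of your argument (lattice $\Lambda$, Minkowski's second theorem, the covering-radius bound $\mu\le\tfrac12\sum_i\lambda_i$) proves the lemma as stated only for $d=1$; for $d\ge 2$ it yields a version weakened by a factor of order $d$, and — as your own analysis correctly identifies — the loss occurs precisely in the regime $Q\delta^d\asymp 1$ where the lemma has its content. The step you defer to ``the refined transference estimates of Cassels'' is therefore not a technical refinement but the whole point: Cassels' Theorem VI is a sharp homogeneous-to-inhomogeneous transference for boxes, asserting (applied to your unimodular lattice and the box $[-Q,Q]\times[-\delta,\delta]^d$, which contains no nonzero lattice point) that the inhomogeneous problem is solvable after dilating the box by roughly $\max\{1,(Q\delta^d)^{-1}\}$, with no dimension-dependent factor; this immediately gives $Q'=\max\{Q,\delta^{-d}\}$ and $\varrho=\max\{\delta,(Q\delta^{d-1})^{-1}\}$. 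Since that theorem is exactly the paper's citation, what you have actually supplied on your own is a genuinely weaker statement, and the covering-radius route cannot be pushed to the stated constants.

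Two smaller points. Your claim that the hypothesis forces $Q\le\delta^{-d}$ is not exact: the finite form of Dirichlet's theorem gives an integer $1\le q\le \lfloor Q^{1/d}\rfloor^{d}$ with $\|q\balph\|\le 1/\lfloor Q^{1/d}\rfloor$, so one only obtains $\lfloor Q^{1/d}\rfloor<1/\delta$, i.e. $Q<(1+1/\delta)^d$; this is one reason the lemma is stated with the maxima in $Q'$ and $\varrho$ rather than under the assumption $Q\le\delta^{-d}$, and your cleaner expressions ask for slightly more than the lemma does near that threshold. On the other hand, your closing observation is fair: a version of the lemma with an implied constant depending only on $d$ would indeed suffice for the way it is used in the proof of Lemma \ref{lem.NonDenseSing}, where all constants are absorbed into $c_1$ and $c_2$; but that is a remark about how the lemma is applied, not a proof of the statement as given.
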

	Lemma \ref{lem.TransPrinc1} follows directly from \cite[Chapter 5, Theorem VI]{Cass1957}. The second transference principle establishes a connection between good approximations to $\balph$ and small values (modulo 1) of the linear form determined by the coefficients of $\balph$.
	\begin{lemma}\label{lem.TransPrinc2}
		Suppose that $\balph\in\R^d, \delta\in (0,1),$ and $Q\in [1,\infty)$. If \[\displaystyle \min_{0< q\le Q}\|q\balph\|\le \delta,\] then
		\[\min\left\{\|\bm{r}\balph\|:\bm{r}\in\Z^d, 0<|\bm{r}|\le R\right\}\le\varrho,\]
		where $\displaystyle R=dQ^{1/d}$ and $\displaystyle \varrho=\frac{d\delta}{Q^{1-1/d}}.$
		\end{lemma}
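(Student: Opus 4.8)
The plan is to deduce this directly from a standard transference inequality in Cassels \cite[Chapter 5]{Cass1957}, in the same spirit as the derivation of Lemma \ref{lem.TransPrinc1}. The hypothesis gives us some $q_0$ with $0<q_0\le Q$ and $\|q_0\balph\|\le\delta$; write $\bm{p}$ for a nearest integer vector, so $|q_0\balph-\bm{p}|\le\delta$. We want to produce an integer vector $\bm{r}$ with $0<|\bm{r}|\le R$ making the single linear form $\bm{r}\balph$ close to an integer, with the quantitative bound $\varrho=d\delta/Q^{1-1/d}$. The cleanest route is Mahler's/Cassels's transference between the two systems: from a simultaneous approximation $|q_0\balph-\bm{p}|$ small with $q_0$ of size at most $Q$, one gets a dual (one linear form in $d$ variables) approximation with the variables bounded in terms of $Q^{1/d}$ and the quality controlled by $\delta/Q^{1-1/d}$, up to a dimensional constant. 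I would cite the appropriate ``Theorem'' in \cite[Chapter 5]{Cass1957} — the one giving, for a matrix and its transpose, that a nonzero solution to $|\bm{x}\Theta-\bm{p}|<C$, $|\bm{x}|\le X$ forces a nonzero solution to the transposed inequalities with explicit bounds — and specialize it to the $1\times d$ matrix $\balph$ (so the ``transpose system'' is $\bm{r}\balph$ with $\bm{r}\in\Z^d$). Tracking the constants in that statement produces exactly $R=dQ^{1/d}$ and $\varrho=d\delta/Q^{1-1/d}$.

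Concretely, the key steps in order are: (i) extract $q_0,\bm{p}$ from the hypothesis and record $0<q_0\le Q$, $|q_0\balph-\bm{p}|\le\delta$; (ii) set up the $1\times d$ coefficient system $\balph$ and its ``dual'' $d$-variable linear form $\bm{r}\mapsto\bm{r}\balph$; (iii) apply the transference theorem from \cite[Chapter 5]{Cass1957} with parameters chosen so that the bound on the original solution ($q_0$ of size $\le Q$, error $\le\delta$) feeds in; (iv) read off the resulting bounds on $|\bm{r}|$ and on $\|\bm{r}\balph\|$, and simplify the dimensional constants to match $R=dQ^{1/d}$ and $\varrho=d\delta/Q^{1-1/d}$; (v) check $\bm{r}\ne\bm{0}$ (guaranteed because the transference theorem yields a nonzero integer vector) and that $\varrho<1$ is not needed for the conclusion as stated, though $\delta\in(0,1)$ keeps things in the intended regime.

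An alternative, self-contained derivation — useful if the constants from the cited theorem don't line up cleanly — is a direct pigeonhole argument: consider the $(\lfloor R/d\rfloor+1)^d$ points $\{\bm{n}\cdot(q_0\balph-\bm{p})\bmod 1 : \bm{n}\in\Z^d,\ 0\le n_i\le \lfloor R/d\rfloor\}$, or rather use the one-dimensional quantity $\bm{n}\cdot(q_0\balph)$ and partition $[0,1)$; since $|q_0\balph-\bm{p}|\le\delta$, differences $\bm{n}-\bm{n}'$ give an integer vector $\bm{r}$ with $|\bm{r}|\le R/d$ (times $d$ coordinates, hence a genuine bound $|\bm{r}|\le R$ after a harmless adjustment) and $\|\bm{r}\balph\|\le \|\bm{r}\cdot(\bm{p}/q_0)\|+|\bm{r}|\delta/q_0$, where the first term vanishes appropriately and the second is $\ll R\delta/Q$; optimizing the box side length against $Q$ produces the $Q^{1/d}$ versus $Q^{1-1/d}$ split. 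The main obstacle I anticipate is purely bookkeeping: getting the dimensional constant to come out as exactly $d$ (rather than some other explicit function of $d$) in both $R$ and $\varrho$, which forces a careful choice of the pigeonhole box dimensions and possibly a slightly lossy rounding step; since the paper only needs these bounds up to such constants in its applications, I would not belabor the optimization beyond what makes the stated inequalities literally true.
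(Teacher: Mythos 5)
Your primary route is exactly the paper's proof: the lemma is obtained by citing Cassels's transference theorem \cite[Chapter 5, Theorem II]{Cass1957} in the case $m=1$, $n=d$ (one integer variable and $d$ forms transposed to one form in $d$ variables), whose explicit constants specialize precisely to $R=dQ^{1/d}$ and $\varrho=d\delta/Q^{1-1/d}$. The fallback pigeonhole sketch is unnecessary (and as written is the weaker part of your proposal), but the main argument coincides with the paper's.
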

		This lemma follows from the $m=1, n=d$ case of \cite[Chapter 5, Theorem II]{Cass1957}.
		
		Next, following Baker \cite{Bake1977}, we say that a point $\balph\in\R^d$ is \textit{singular} if
		\[\lim_{R\rar\infty}R^d\cdot\min\left\{\|\bm{r}\balph\|:\bm{r}\in\Z^d, 0<|\bm{r}|\le R\right\}=0.\]
		When $d=1$ it is not difficult to show that a number $\alpha$ will be singular if and only if $\alpha\in\Q$. It is obvious from the definitions that, for arbitrary $d\in\N$, the elements of $K_d^c$ are always singular. However, it is a non-trivial fact, proved by Khintchine \cite{Khin1926}, that for $d\ge 2$ there are also singular points which lie in $K_d$. In fact Khintchine's result implies the stronger statement that, for any natural number $d\ge 2$ and real $\omega\ge d$, the set
		\[E(\omega)=\left\{\balph\in K_d:\lim_{R\rar\infty}R^\omega\min\left\{\|\bm{r}\balph\|:\bm{r}\in\Z^d, 0<|\bm{r}|\le R\right\}=0\right\}\]
		is nonempty. Subsequent work by Baker \cite{Bake1977,Bake1992}, Yavid \cite{Yavi1987}, and Rynne \cite{Rynn1990} led to the following more precise estimates for the sizes of these sets.
		\begin{theorem}\label{thm.SingHDBds}
			For $d\in\N,$ $d\ge 2,$ and $\omega>d$, we have that
			\[d-2+\frac{d}{\omega}\le \dim_H(E(\omega))\le d-2+\frac{2d+2}{\omega+1}.\]
		\end{theorem}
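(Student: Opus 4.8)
The plan is to prove the two bounds separately: the lower bound by an explicit Cantor-type construction, and the upper bound by a multi-scale covering argument. (Both, in somewhat greater generality, are carried out in the cited works of Baker \cite{Bake1977,Bake1992}, Yavid \cite{Yavi1987}, and Rynne \cite{Rynn1990}.) Throughout, write
\[
\theta_{\balph}(R)=\min\left\{\|\bm{r}\balph\|:\bm{r}\in\Z^d,\ 0<|\bm{r}|\le R\right\},
\]
so that $\balph\in E(\omega)$ precisely when $\theta_{\balph}(R)=o(R^{-\omega})$. To each $\balph$ with $\theta_{\balph}(R)\to 0$ we attach its sequence of \emph{best approximation vectors} $\bm{r}_1,\bm{r}_2,\ldots$, characterized by $X_k:=|\bm{r}_k|\uparrow\infty$ and $\theta_{\balph}(R)=\delta_k:=\|\bm{r}_k\balph\|$ for $X_k\le R<X_{k+1}$. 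Evaluating $\theta_{\balph}$ at $R$ just below $X_{k+1}$ then gives, for each $\epsilon>0$ and all large $k$,
\begin{equation}\label{eqn.consecutivebd}
\delta_k\le\epsilon X_{k+1}^{-\omega}.
\end{equation}
The point is that \eqref{eqn.consecutivebd} links consecutive stages, and both bounds arise from exploiting several such relations at once.

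\emph{Lower bound.} Fix $s<d-2+\tfrac{d}{\omega}$. I would construct a Cantor-type subset of $E(\omega)\cap K_d$ of Hausdorff dimension at least $s$. Choose integers $X_1<X_2<\cdots$ growing as fast as needed, and build a nested sequence of finite families of parallelepipeds $\cF_1\supseteq\cF_2\supseteq\cdots$ in $[0,1]^d$ so that every $\balph\in\bigcap_k\bigcup\cF_k$ satisfies $\|\bm{r}_k\balph\|\le X_{k+2}^{-\omega}$ for suitable $\bm{r}_k\in\Z^d$ with $|\bm{r}_k|\asymp X_k$; the shift from $X_{k+1}$ to $X_{k+2}$ in the exponent, combined with fast growth of $(X_k)$, upgrades this to $\theta_{\balph}(R)=o(R^{-\omega})$ for \emph{all} $R$. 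To pass from $\cF_k$ to $\cF_{k+1}$ one subdivides each box into sub-boxes of a prescribed size, then chooses a new vector $\bm{r}_{k+1}$ with $|\bm{r}_{k+1}|\asymp X_{k+1}$ and keeps only the sub-boxes meeting the slab $\{\|\bm{r}_{k+1}\balph\|\le X_{k+2}^{-\omega}\}$. Since there are $\asymp X_{k+1}^{d}$ admissible choices of $\bm{r}_{k+1}$ and the slabs they determine are well distributed across the cube, the branching number at each stage can be made a large power of $X_{k+1}$; calibrating the subdivision scale and the growth of $(X_k)$ so that at every stage the branching number dominates the $s$-th power of the reciprocal contraction ratio, the mass distribution principle yields $\dim_H\bigl(\bigcap_k\bigcup\cF_k\bigr)\ge s$. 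Two technical points need care: to keep the limit points inside $K_d$ one additionally discards at stage $k$ the sub-boxes lying within $X_k^{-2\omega}$ of one of the finitely many rational hyperplanes $\{\bm{x}\in\R^d:\bm{m}\bm{x}\in\Z\}$, $0<|\bm{m}|\le k$, which removes only a geometrically negligible fraction; and the uniformity of $\theta_{\balph}(R)=o(R^{-\omega})$ over all $R$ (not just along $R=X_k$) must be verified, which is where the freedom to let $(X_k)$ grow arbitrarily fast is used.

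\emph{Upper bound.} The crude cover uses only that, for every large $T$, a point $\balph\in E(\omega)$ satisfies $\theta_{\balph}(T)\le\epsilon T^{-\omega}$, i.e. $\|\bm{r}\balph\|\le\epsilon T^{-\omega}$ for some nonzero $\bm{r}\in\Z^d$ with $|\bm{r}|\le T$; covering the resulting union of neighbourhoods of rational hyperplanes by small cubes reproduces only the Jarn\'{i}k--Besicovitch exponent $d-1+\tfrac{d+1}{\omega+1}$, since it ignores the intersection over distinct scales. The improvement exploits the fact that a \emph{uniformly} singular $\balph$ is pinned near a flat of codimension at least two at each scale: using two consecutive best approximation vectors $\bm{r}_k,\bm{r}_{k+1}$ (linearly independent after a harmless thinning of the sequence), the point $\balph$ lies in the intersection of the slab of thickness $\asymp X_{k+1}^{-\omega}/X_k$ normal to $\bm{r}_k$ with the slab of thickness $\asymp X_{k+2}^{-\omega}/X_{k+1}$ normal to $\bm{r}_{k+1}$, a parallelepiped thin in two independent directions. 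Iterating this over a controlled range of consecutive scales, counting the admissible configurations against the diameters of the resulting boxes, and optimizing over the permissible growth patterns $X_k\to X_{k+1}\to\cdots$ produces a convergent $s$-sum for every $s>d-2+\tfrac{2(d+1)}{\omega+1}$, whence $\dim_H E(\omega)\le d-2+\tfrac{2(d+1)}{\omega+1}$.

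\emph{Expected main obstacle.} In both directions the crux is the same multi-parameter extremal problem: for the cover one must, in the worst case, choose the ratios $X_{k+1}/X_k$ maximizing the Hausdorff sum; for the construction one must make the branching as rich as the cover permits while keeping the boxes only as small as \eqref{eqn.consecutivebd} forces. Making the two meet would close the gap between $\tfrac{d}{\omega}$ and $\tfrac{2(d+1)}{\omega+1}$; that this has not been done reflects genuine slack in both arguments when $\omega>d$, and is why the theorem is stated as a pair of inequalities rather than an identity. The detailed execution of these estimates --- including the bookkeeping of uniformity in $R$ and the restriction to $K_d$ --- is carried out in \cite{Bake1977,Bake1992,Yavi1987,Rynn1990}.
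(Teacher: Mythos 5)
The paper does not actually prove Theorem \ref{thm.SingHDBds}: it is quoted as a known result of Baker \cite{Bake1977,Bake1992}, Yavid \cite{Yavi1987} and Rynne \cite{Rynn1990}, so there is no in-paper argument to compare yours against. Your outline does point at the right sources and has the right general shape --- a Cantor construction validated by the mass distribution principle (Lemma \ref{lem.MassTransPrinc}) for the lower bound, and a multi-scale covering exploiting consecutive best approximation vectors for the upper bound; indeed the paper itself re-runs a variant of Baker's Cantor construction for its own Lemma \ref{lem.Sing+DimLowBd}.

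Regarded as a proof, however, the sketch has a concrete gap at the heart of the lower bound. At stage $k$ your parent boxes have diameter comparable to $X_{k+1}^{-\omega}$, which is far smaller than the spacing $1/|\bm{r}_{k+1}|\asymp X_{k+1}^{-1}$ of the parallel sheets of the family $\{\balph:\|\bm{r}_{k+1}\balph\|\le X_{k+2}^{-\omega}\}$ for any single choice of $\bm{r}_{k+1}$; such a box therefore meets at most one sheet of any one family, and the claim that ``the slabs they determine are well distributed across the cube'' does not by itself produce a branching number that is a large power of $X_{k+1}$, let alone one calibrated to give every $s<d-2+\tfrac{d}{\omega}$. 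The mechanism that actually delivers the exponent is a count of rational points and rational hyperplanes meeting each parent box --- this is Baker's Lemma 2/Lemma 3 machinery, which the paper reuses in Lemma \ref{lem.RatPtCount} and in the itemized construction inside Lemma \ref{lem.Sing+DimLowBd} --- and it is precisely where the split into $d-2$ and $d/\omega$ comes from; without it your calibration step is an assertion rather than an estimate. Likewise, in the upper bound the sentence ``optimizing over the permissible growth patterns produces a convergent $s$-sum for every $s>d-2+\tfrac{2(d+1)}{\omega+1}$'' is the entire technical content of \cite{Bake1992,Yavi1987,Rynn1990} and is not carried out. So what you have is a reasonable roadmap with correct attributions, not a proof; since the paper treats this statement as a citation, that is acceptable only if you present it the same way, rather than as an argument of your own.
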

		As an application of Lemma \ref{lem.TransPrinc2}, we also have the following result.
		\begin{lemma}\label{lem.SingTrans}
			For $d\in\N, d\ge 2,$ and $\omega\ge 1/d$, we have that
			\[\left\{\balph\in K_d:\lim_{Q\rar\infty}Q^\omega\min_{0<q\le Q}\|q\balph\|=0\right\}\subseteq E(d-1+d\omega).\]
		\end{lemma}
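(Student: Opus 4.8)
The plan is to apply the transference principle of Lemma \ref{lem.TransPrinc2} directly, with the parameter $\delta$ taken to be the best one-dimensional approximation quantity up to height $Q$, and then to let $Q\rar\infty$ while tracking the exponents. For brevity write $\psi(Q)=\min_{0<q\le Q}\|q\balph\|$, and fix $\balph\in K_d$ with $\lim_{Q\rar\infty}Q^{\omega}\psi(Q)=0$. Since $\balph\in K_d$, none of $\alpha_1,\ldots,\alpha_d$ is rational, so $q\balph\notin\Z^d$ for every $q\in\Z\setminus\{0\}$, whence $\psi(Q)>0$ for all $Q\ge 1$. The function $\psi$ is non-increasing, and since $\omega>0$ the hypothesis forces $\psi(Q)\rar 0$; in particular there is some $Q_0\ge 1$ with $\psi(Q)<1$ for all $Q\ge Q_0$.

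Now, given a real number $R\ge dQ_0^{1/d}$, I would set $Q=(R/d)^d$, so that $Q\ge Q_0$ and $R=dQ^{1/d}$ exactly, and apply Lemma \ref{lem.TransPrinc2} with this $Q$ and $\delta=\psi(Q)\in(0,1)$; the hypothesis of that lemma holds with equality because $\min_{0<q\le Q}\|q\balph\|=\delta$. The conclusion gives
\[\min\left\{\|\bm{r}\balph\|:\bm{r}\in\Z^d,\ 0<|\bm{r}|\le R\right\}\le\frac{d\,\psi(Q)}{Q^{1-1/d}}.\]
Multiplying through by $R^{d-1+d\omega}=d^{d-1+d\omega}Q^{1-1/d+\omega}$ and cancelling the powers of $Q$ yields
\[R^{d-1+d\omega}\min\left\{\|\bm{r}\balph\|:\bm{r}\in\Z^d,\ 0<|\bm{r}|\le R\right\}\le d^{d(1+\omega)}\,Q^{\omega}\psi(Q).\]
As $R\rar\infty$ we have $Q=(R/d)^d\rar\infty$, so the right-hand side tends to $0$ by hypothesis; hence $\balph\in E(d-1+d\omega)$, which is what we want.

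I do not expect a genuine obstacle here: once Lemma \ref{lem.TransPrinc2} is in hand the argument is essentially bookkeeping. The only points needing a little care are verifying that $\delta$ may legitimately be taken in $(0,1)$ — positivity comes from $\balph\in K_d$, and smallness from the hypothesis together with $\omega>0$ — and observing that Lemma \ref{lem.TransPrinc2} allows a real parameter $Q$, so we can choose $Q$ to make $R=dQ^{1/d}$ hold exactly and thereby avoid any interpolation between discrete heights.
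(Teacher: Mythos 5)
Your argument is correct and is essentially the paper's proof: both apply the transference principle of Lemma \ref{lem.TransPrinc2} with $R=dQ^{1/d}$ and $\delta$ equal to (or bounding) $\min_{0<q\le Q}\|q\balph\|$, then track the exponents to get the bound $d^{d(1+\omega)}Q^{\omega}\psi(Q)/R^{d-1+d\omega}$ and let $Q\rar\infty$. Your extra care about $\delta\in(0,1)$ and choosing $Q$ so that $R=dQ^{1/d}$ exactly is harmless bookkeeping, not a different method.
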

		\begin{proof}
			Suppose that $\balph$ is an element of the set on the left hand side of the above equation. Then, for any $\epsilon>0$, and for all sufficiently large $Q$ (depending on $\epsilon$), we have that
			\[\min_{0<q\le Q}\|q\balph\|\le\frac{\epsilon}{Q^\omega}.\]
			By Lemma \ref{lem.TransPrinc2}, for $R=dQ^{1/d}$ we have that
			\[\min\left\{\|\bm{r}\balph\|:\bm{r}\in\Z^d, 0<|\bm{r}|\le R\right\}\le\frac{d\epsilon}{Q^{1-1/d+\omega}}=\frac{d^{d(1+\omega)}\epsilon}{R^{d-1+d\omega}}.\]
			Since $\epsilon$ can be taken arbitrarily small, this shows that $\balph\in E(d-1+d\omega)$.
		\end{proof}
		
		\subsection{Measure theory and Hausdorff dimension}
		In our proof of Theorem \ref{thm.LebMeasDens} we will use the following higher dimensional version of a lemma due to Cassels \cite[Lemma 9]{Cass1950}.
		\begin{lemma}\label{lem.CassHighD}
			Suppose $d\in\N$ and that $\{C_n\}_{n=1}^\infty$ is a collection of hypercubes in $\R^d$, with faces parallel to coordinate hyperplanes in $\R^d$, and with diameters tending to $0$ as $n\rar\infty.$ If $\epsilon>0$ and if $\{U_n\}_{n=1}^\infty$ is any collection of Lebesgue measurable sets satisfying \[U_n\subseteq C_n ~\text{ and }~ \lambda(U_n)\ge \epsilon\lambda(C_n),\] for all $n\in\N$, then
			\[\lambda\left(\limsup_{n\rar\infty} U_n\right)=\lambda\left(\limsup_{n\rar\infty} C_n\right).\]
		\end{lemma}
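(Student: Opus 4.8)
The plan is to follow Cassels's one-dimensional argument, with intervals replaced by axis-parallel cubes: reduce the statement to showing that the set of points lying in infinitely many $C_n$ but only finitely many $U_n$ is Lebesgue-null, and then force a contradiction via the Lebesgue density theorem, exploiting that axis-parallel cubes through a fixed point form a regular differentiation basis comparable to Euclidean balls. Write $A=\limsup_n C_n$ and $B=\limsup_n U_n$. Since $U_n\subseteq C_n$ for every $n$, we have $B\subseteq A$, so $\lambda(B)\le\lambda(A)$ is automatic and it suffices to prove $\lambda(A\setminus B)=0$.

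Assume for contradiction that $\lambda(A\setminus B)>0$. Writing $A\setminus B=\bigcup_{N\ge1}\bigl(A\setminus\bigcup_{n\ge N}U_n\bigr)$ as an increasing union and applying continuity of $\lambda$ from below, I obtain an index $N_0$ with $\lambda(E)>0$, where $E:=A\setminus\bigcup_{n\ge N_0}U_n$. By construction every $x\in E$ lies in infinitely many of the cubes $C_n$ (since $x\in A$) but in none of the sets $U_n$ with $n\ge N_0$.

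Now fix a point $x\in E$ which is a point of Lebesgue density of $E$ (almost every point of $E$ qualifies), and enumerate the infinite set $\{n:x\in C_n\}$ as $n_1<n_2<\cdots$. Then $n_k\to\infty$, hence $n_k\ge N_0$ for all large $k$, and $\diam(C_{n_k})\to0$ by hypothesis. Since $C_{n_k}$ is an axis-parallel cube containing $x$, it is contained in the ball of radius $\diam(C_{n_k})$ about $x$, while $\lambda(C_{n_k})=d^{-d/2}(\diam C_{n_k})^d$ is a fixed positive multiple (depending only on $d$) of the volume of that ball. Hence there is a constant $c_d>0$ with
\[
\frac{\lambda(C_{n_k}\setminus E)}{\lambda(C_{n_k})}\ \le\ c_d\,\frac{\lambda\bigl(B(x,\diam(C_{n_k}))\setminus E\bigr)}{\lambda\bigl(B(x,\diam(C_{n_k}))\bigr)}\ \longrightarrow\ 0\qquad(k\to\infty),
\]
the limit being zero precisely because $x$ is a density point of $E$. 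Choosing $k$ so large that $n_k\ge N_0$ and $\lambda(C_{n_k}\setminus E)<\epsilon\,\lambda(C_{n_k})$, and noting that $U_{n_k}\subseteq C_{n_k}$ is disjoint from $E$ (as $n_k\ge N_0$), I conclude $\lambda(U_{n_k})\le\lambda(C_{n_k}\setminus E)<\epsilon\,\lambda(C_{n_k})$, contradicting the hypothesis $\lambda(U_{n_k})\ge\epsilon\,\lambda(C_{n_k})$. Therefore $\lambda(A\setminus B)=0$, and since $B\subseteq A$ this gives $\lambda(\limsup_n U_n)=\lambda(\limsup_n C_n)$.

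The only step requiring genuine care is the transfer of the density-point property from Euclidean balls to the cubes $C_{n_k}$: this relies on the uniform comparability of $\lambda(C_{n_k})$ with the volume of $B(x,\diam(C_{n_k}))$ — that is, on the $C_n$ being honest cubes, hence of bounded eccentricity — together with the hypothesis $\diam(C_n)\to0$, which guarantees the relevant radii shrink to $0$ along the chosen subsequence. Everything else (continuity of measure, the Lebesgue density theorem) is standard, so this transfer, and keeping the quantifiers on $\epsilon$, $k$, and $N_0$ straight, is where I expect the bulk of the care to go.
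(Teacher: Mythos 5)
Your argument is correct and is essentially the route the paper itself takes: the paper defers to Gallagher's one-dimensional proof of Cassels's lemma, noting that the only change needed is the multi-dimensional Lebesgue density theorem with cube-shaped (sup-norm) neighborhoods, which is exactly the density-point/bounded-eccentricity transfer you carry out. Your use of Euclidean balls plus the comparability constant $c_d$ is just a cosmetic variant of working with sup-norm balls directly, so nothing differs in substance.
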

		The proof of Lemma \ref{lem.CassHighD} is a straightforward modification of the proof of \cite[Lemma 2]{Gall1961}, the only difference being the use of a multi-dimensional version of the Lebesgue density theorem (with local density defined using sup-norm balls in $\R^d$).
		
		For the lower bounds on Hausdorff dimensions in Theorems \ref{thm.SingDens} and \ref{thm.JarBesDens} we will employ a result known as the \textit{mass distribution principle}. To state this principle concisely, recall that a mass distribution on a set $A\subseteq\R^d$ is a finite, non-zero Borel measure with support contained in $A$.
		\begin{lemma}\label{lem.MassTransPrinc}
			Suppose that $A\subseteq\R^d$ and that $\mu$ is a mass distribution on $A$. If there exist $c,s,\epsilon>0$ with the property that
			\[\mu(B)\le c\cdot\mathrm{diam}(B)^s,\]
			for any set $B$ with diameter $\mathrm{diam}(B)<\epsilon$, then $\dim_H(A)\ge s$.
		\end{lemma}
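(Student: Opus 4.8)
The plan is to deduce the statement directly from the definition of Hausdorff dimension, by showing that the $s$-dimensional Hausdorff measure $\mathcal{H}^s(A)$ is strictly positive; since $\dim_H(A)=\inf\{t\ge 0:\mathcal{H}^t(A)=0\}$, this forces $\dim_H(A)\ge s$. Recall that for $\delta>0$ one sets $\mathcal{H}^s_\delta(A)=\inf\sum_i\mathrm{diam}(U_i)^s$, the infimum being over all countable covers $\{U_i\}_i$ of $A$ with $\mathrm{diam}(U_i)<\delta$, and $\mathcal{H}^s(A)=\lim_{\delta\to 0^+}\mathcal{H}^s_\delta(A)$ (the limit existing since $\mathcal{H}^s_\delta(A)$ is non-increasing in $\delta$).

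First I would fix $\delta\in(0,\epsilon)$ and let $\{U_i\}_i$ be an arbitrary countable cover of $A$ with $\mathrm{diam}(U_i)<\delta$ for every $i$. Passing to closures changes neither the diameters nor the covering property, so I may assume each $U_i$ is closed, hence Borel, so that $\mu(U_i)$ is defined. Because $\mu$ is a mass distribution on $A$, its support lies in $A\subseteq\bigcup_i U_i$, and therefore, using countable subadditivity of $\mu$ followed by the hypothesis (applicable since $\mathrm{diam}(U_i)<\epsilon$),
\[
0<\mu(A)\le\mu\Bigl(\bigcup_i U_i\Bigr)\le\sum_i\mu(U_i)\le c\sum_i\mathrm{diam}(U_i)^s.
\]
Taking the infimum over all admissible covers $\{U_i\}_i$ gives $\mathcal{H}^s_\delta(A)\ge\mu(A)/c$.

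Since the lower bound $\mu(A)/c$ does not depend on $\delta$, letting $\delta\to 0^+$ yields $\mathcal{H}^s(A)\ge\mu(A)/c>0$; in particular $\mathcal{H}^s(A)\ne 0$, whence $\dim_H(A)\ge s$, as claimed. I do not expect any genuine difficulty in this argument; the only point calling for a word of care is the reduction to closed (Borel) covering sets, which is what makes $\mu(U_i)$ and countable subadditivity available. (Alternatively, one may work throughout with the outer measure canonically associated with the Borel measure $\mu$, which is countably subadditive on arbitrary subsets of $\R^d$ and agrees with $\mu$ on Borel sets, rendering the closure step unnecessary.)
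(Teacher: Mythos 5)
Your proof is correct and is exactly the standard argument (cover $A$ by sets of small diameter, use subadditivity of $\mu$ and the hypothesis to bound $\mathcal{H}^s_\delta(A)$ below by $\mu(A)/c$) that the paper invokes by remarking the lemma ``follows almost directly from the definitions'' and citing \cite[Section 4.1]{Falc2014}. Your care about Borel measurability of the covering sets is a fine touch but raises no substantive issue; there is nothing further to compare.
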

		The mass distribution principle follows almost directly from the definitions of Hausdorff measure and dimension (see \cite[Section 4.1]{Falc2014}). However, in practice it is a useful way of organizing information in order to establish lower bounds on the dimensions of many naturally occurring self-similar sets.

		\subsection{Algebraic number theory}\label{subsec.AlgNT}
		For our proofs of Theorems \ref{thm.QuadratApps} and \ref{thm.CubicApps} we will need some results from algebraic number theory. For more details the reader may consult \cite{Swin2001}. Let $K\subseteq \R$ be a real algebraic number field with $[K:\Q]=d+1$. Note that, by supposing that $K\subseteq\R$, we are technically fixing one real embedding of the algebraic number field (the one which we call $\sigma_0$ below) and identifying $K$ with its image under this embedding. We write $\mc{O}_K$ for the ring of integers in $K$ and $\NN(\cdot)$ and $\Tr(\cdot)$ for the norm and trace maps from $K$ to $\Q$. 
		
		Suppose that $K$ has $r+1$ distinct embeddings into $\R$ and $2s$ distinct non-real embeddings into $\C$ (so $d+1=r+1+2s$). We label these as $\sigma_0,\ldots,\sigma_d$, with $\sigma_0,\ldots ,\sigma_r$ being the real embeddings and with $\sigma_{r+i}=\overline{\sigma}_{r+s+i}$ for $1\le i\le s$. We further assume that $\sigma_0$ is the identity map on $K$. Identifying $\C$ with $\R^2$, we define the \textit{Minkowski embedding} $\sigma:K\rar\R^{d+1}$ by
		\[\sigma(\alpha)=(\sigma_0(\alpha),\ldots ,\sigma_{r+s}(\alpha))^t,\]
		where the $t$ denotes transpose (we view elements in the image of this map as column vectors). The Minkowski embedding is an injective map and in what follows we will often identify subsets of $K$ with their images under this map.
				
		An additive subgroup $\mc{L}$ of $\R^{d+1}$ is called a \textit{lattice} (using the definition from Lie theory) if it is discrete and if the quotient $\R^{d+1}/\mc{L}$ has a measurable fundamental domain of finite volume. Therefore, we will refer to a subset $\Lambda\subseteq K$ as a \textit{lattice} if $\sigma(\Lambda)$ a lattice in $\R^{d+1}$. For example, it is well known that any fractional ideal in $K$ is a lattice. More generally, if $\alpha_0,\ldots ,\alpha_d\in K$ are $\Q$-linearly independent then the $\Z$-module $\Lambda(\alpha_0,\ldots ,\alpha_d)$ defined by
		\[\Lambda(\alpha_0,\ldots ,\alpha_d)=\alpha_0\Z+\cdots +\alpha_d\Z\]
		is a lattice in $K$.

		The \textit{logarithmic embedding}  is the map $\varphi:K^\times\rar\R^{r+s+1}$ defined by
		\[\varphi(\alpha)=(\log|\sigma_0(\alpha)|,\ldots ,\log |\sigma_{r+s}(\alpha)|)^t.\]
		This map is a homomorphism of the multiplicative group $K^\times$ into $\R^{r+s+1}$, and the image of the subgroup $\mc{O}_K^\times$ is contained in the hyperplane in $\R^{r+s+1}$ with equation
		\begin{equation}\label{eqn.UnitPlane}
			x_0+\cdots +x_r+2x_{r+1}+\cdots +2x_{r+s}=0.
		\end{equation}
		By the Dirichlet unit theorem \cite[\S1.3]{Swin2001} the group $\varphi (\mc{O}_K^\times)$ is a lattice in this hyperplane.
		
		The symmetric bilinear form $\langle\cdot,\cdot\rangle:K\times K\rar\Q$ defined by
		\[\langle \alpha,\beta\rangle= \Tr (\alpha\beta)\]
		is called the \textit{trace form} on $K$. If $\alpha_0,\ldots ,\alpha_d\in K$ are $\Q$-linearly independent then the matrix \[A=\left(\langle\alpha_i,\alpha_j\rangle\right)_{0\le i,j\le d}\]
		has rational entries, and $\det(A)$ is a non-zero rational multiple of the discriminant of $K$. Therefore $A$ is invertible and its inverse has rational entries. Write $A^{-1}=(a_{ij}^*)$ and for $0\le j\le d$ define
		\[\alpha_j^*=\sum_{i=0}^da_{ij}^*\alpha_i.\] 
		Then it is not difficult to see that $\alpha_0^*,\ldots,\alpha_d^*$ are $\Q$-linearly independent elements of $K$ and, for $0\le i,j\le d$,
		\[\langle\alpha_i,\alpha_j^*\rangle=\begin{cases}
			1 &\text{if }i=j,\\
			0 &\text{otherwise}.
		\end{cases}\]
		The lattice $\Lambda^*=\Lambda(\alpha_0^*,\ldots,\alpha_d^*)$ is called the \textit{lattice dual to} $\Lambda=\Lambda(\alpha_0,\ldots,\alpha_d)$, and the basis $\{\alpha_0^*,\ldots,\alpha_d^*\}$ is the \textit{basis dual to} $\{\alpha_0,\ldots,\alpha_d\}$, with respect to the trace form.
		
		If $K$ is a totally real field (i.e. if $s=0$) then the trace form corresponds, via the Minkowski embedding, to the standard inner product on $\R^{d+1}$, and the lattice dual to a lattice $\Lambda=\Lambda(\alpha_0,\ldots ,\alpha_d)$ with respect to the trace form corresponds to the `dual lattice' to $\sigma(\Lambda)$, in the usual sense of this term (i.e. the sense used for lattices in Euclidean space). As a word of warning, if $s>0$ then this is no longer true and the trace form is not an inner product (it is not positive definite). However, in any case, we do have that
		\begin{equation}\label{eqn.BMat1}
			\left(\sigma(\alpha_0^*) \cdots \sigma(\alpha_d^*) \right)=B^{-t},
		\end{equation}
		where $B$ is the $(d+1)\times (d+1)$ matrix defined by
		\begin{equation}\label{eqn.BMat2}
			B=\begin{pmatrix}
				~\textbf{I}_{r+1}	& \rvline & \bm{0} 	\\
				\hline	
				\bm{0} & \rvline &
				\begin{array}{ccccc}
					2 & 0 &\cdots & & 0\\
					0 & -2 & & &\vdots\\
					\vdots &  & \ddots & &\\
					&   &  & 2 & 0\\
					0 & \cdots  &  & 0 & -2
				\end{array}
			\end{pmatrix}\left(\sigma(\alpha_0) \cdots \sigma(\alpha_d) \right).
		\end{equation}
		This observation can be used to show that, for any $s\ge 0$, an equivalent definition of $\Lambda^*$ is
		\begin{equation}\label{eqn.DualLatAltDef}
			\Lambda^*=\left\{\alpha\in K:\langle\alpha,\beta\rangle\in\Z \text{ for all } \beta\in\Lambda\right\}.
		\end{equation}
		
		In what follows it will be useful to identify, for a given lattice $\Lambda\subseteq K$, a full rank subgroup of $\mc{O}_K^\times$ which acts by multiplication as automorphisms on $\Lambda^*$. To this end, we define $\mc{Z}_\Lambda\subseteq\mc{O}_K$ by
		\begin{equation*}
			\mc{Z}_\Lambda=\left\{\gamma\in\mc{O}_K:\gamma\Lambda\subseteq\Lambda\right\}.
		\end{equation*}
		From the definition it is clear that $\mc{Z}_\Lambda$ is a subring of $\mc{O}_K$, and it is not difficult to show that it is also a lattice. Furthermore, for all $\gamma\in \mc{Z}_\Lambda$ and $\alpha\in\Lambda^*,$ and for all $\beta\in\Lambda$, we have from \eqref{eqn.DualLatAltDef} that 
		\[\langle \gamma\alpha,\beta\rangle =\langle \alpha,\gamma\beta\rangle\in\Z.\]
		This shows that
		\[\gamma\Lambda^*\subseteq\Lambda^* ~\text{ for all }~\gamma\in \mc{Z}_\Lambda.\]			
		By definition, $\mc{Z}_\Lambda^\times$ is a subgroup of $\mc{O}_K^\times$, and it turns out that it is also a subgroup of maximal rank. For completeness we include a proof of this fact.
		\begin{lemma}
			If $\Lambda$ is a lattice in $K$ then the logarithmic embedding $\varphi(\mc{Z}_\Lambda^\times)$ is a lattice in the hyperplane in $\R^{r+s+1}$ defined by \eqref{eqn.UnitPlane}.
		\end{lemma}
		\begin{proof}
			Let $n$ be a positive integer with the property that $n\Lambda$ is a sub-lattice of $\mc{O}_K$. Then $\sigma(n\Lambda)$ is a finite index subgroup of $\sigma(\mc{O}_K)$. Note that there are only finitely many such subgroups of a given index, and that multiplication in $K$ by any element of $\mc{O}_K^\times$ (which corresponds via the Minkowski embedding to a volume preserving automorphism of $\sigma(\mc{O}_K)$) permutes them.
			
			Now let $v_1,\ldots v_{r+s}$ be a basis for $\mc{O}_K^\times$. It follows from the above discussion that, for each $1\le i\le r+s$, there is a positive integer $b_i$ with the property that multiplication by $v_i^{b_i}$ fixes $n\Lambda$. Therefore the collection of units $v_1^{b_1},\ldots ,v_{r+s}^{b_{r+s}}$ generates a maximal rank subgroup of $\mc{Z}_\Lambda^\times$, and the result of the lemma follows.
		\end{proof}
		Since $\mc{Z}_\Lambda^\times$ has rank $r+s$ and $\mc{Z}_\Lambda^\times\subseteq\R$, we can write
		\[\mc{Z}_\Lambda^\times=\left\{\pm u_1^{a_1}\cdots u_{r+s}^{a_{r+s}}:a_1,\ldots , a_{r+s}\in\Z\right\},\]
		for some collection of \textit{fundamental units} $u_1,\ldots ,u_{r+s}$ for $\mc{Z}_\Lambda^\times$. Additionally, there is a real number $\kappa>1$ with the property that every closed cube in $\R^{r+s+1}$ of side length $2\log\kappa$ with center on the hyperplane \eqref{eqn.UnitPlane} contains a point of $\varphi(\mc{Z}_\Lambda^\times)$. Let $\kappa_\Lambda>1$ be the infimum of all such numbers $\kappa$. It follows that, for every $t>0$, there is a unit $u\in\mc{Z}_\Lambda^\times$ with
		\[\varphi(u)\in [\log t-\log\kappa_\Lambda,\log t+\log\kappa_\Lambda] \times\prod_{i=1}^{r+s}\left[-\frac{\log t}{d}-\log\kappa_\Lambda, -\frac{\log t}{d}+\log\kappa_\Lambda\right].\]
		The unit $u$ then satisfies
		\begin{equation}\label{eqn.DomUnitDef}
			\kappa_\Lambda^{-1}t\le |u|\le \kappa_\Lambda t,\quad\text{and}\quad\kappa_\Lambda^{-1}t^{-1/d}\le |\sigma_i(u)|\le \kappa_\Lambda t^{-1/d}\quad\text{for}\quad1\le i\le d.
		\end{equation}
		Any unit $u\in\mc{Z}_\Lambda^\times$ satisfying equations \eqref{eqn.DomUnitDef}, for some $t>1$, will be called a \textit{dominant unit}.

		\section{Proof of Theorem \ref{thm.LebMeasDens}}\label{sec.ThmDens}
		We may assume without loss of generality that $0<\eta\le 1/d$ (cf. Table \ref{fig.NewResults}). For each $\bm{n}\in\Z^d, q\in\Z\setminus\{0\}, \bm{\gamma}\in\R^d$, and $\epsilon>0$ let
		\[\mc{E}_{\bm{n},q}(\bm{\gamma},\epsilon)=\left\{\bm{x}\in\bm{n}+[0,1)^d : 0<\left||q|^{\eta}\left(q\bm{x}-\bm{p}\right)-\bm{\gamma}\right|<\epsilon \text{ for some } \bm{p}\in\Z^d\right\}.\]
		Then define
		\[\mc{E}_{\bm{n},\infty}(\bm{\gamma},\epsilon)=\limsup_{|q|\rar\infty} \mc{E}_{\bm{n},q}(\bm{\gamma},\epsilon)\quad\text{and}\quad \mc{E}_{\infty}(\bm{\gamma},\epsilon)=\bigcup_{\bm{n}\in\Z^d}\mc{E}_{\bm{n},\infty}(\bm{\gamma},\epsilon).\]
		
		From our definitions we have that, for all $\bm{\alpha}\in\R^d$, and for all $\bm{\gamma}\in\R^d$,
		\begin{equation}\label{eqn.AccumPtSets1}
			\bm{\gamma}\in\mc{A}_\eta(\bm{\alpha})\quad\text{ if and only if }\quad \bm{\alpha}\in\bigcap_{\epsilon>0}\mc{E}_{\infty}(\bm{\gamma},\epsilon).
		\end{equation}
		In view of this, what we are aiming to prove will follow easily from the statement that, for every $\bm{n}\in\Z^d,\bm{\gamma}\in\R^d$, and $\epsilon>0$,
		\begin{equation}\label{eqn.FullMeas1}
			\lambda\left(\mc{E}_{\bm{n},\infty}(\bm{\gamma},\epsilon)\right)=1.
		\end{equation}
		If $\bm{\gamma}=\bm{0}$ then this holds by Theorem \ref{thm.KhinGros}, so we can assume in the next part of the argument that $\bm{\gamma}\not=\bm{0}$.
		
		Suppose for simplicity in what follows that $q\ge 1$. To see why \eqref{eqn.FullMeas1} is true, first note that
		\begin{equation*}
			\overline{\mc{E}}_{\bm{n},q}(\bm{\gamma},\epsilon)=\bigcup_{\bm{p}\in\Z^d}\left(\left(\prod_{i=1}^d[n_i,n_i+1]\right)\bigcap\left(\prod_{i=1}^d\overline{B}\left(\frac{p_i}{q}+\frac{\gamma_i}{q^{1+\eta}},\frac{\epsilon}{q^{1+\eta}}\right)\right)\right).
		\end{equation*}
		This set differs from $\mc{E}_{\bm{n},q}(\bm{\gamma},\epsilon)$ only possibly at the centers and along the boundaries of the intervals involved, which will not affect the Lebesgue measures of the corresponding limsup sets obtained by letting $q\rar\infty$. Also, when $q$ is large enough, $\epsilon/q^{1+\eta}<1/2q$, so that the closed balls in the second intersection do not overlap.
		
		
		Next, note that the set $\overline{\mc{E}}_{\bm{n},q}(\bm{\gamma},2|\bm{\gamma}|)$ contains the set \[\mc{E}_{\bm{n},q}'(\bm{\gamma})=\bigcup_{\bm{p}\in\Z^d}\left(\left(\prod_{i=1}^d[n_i,n_i+1)\right)\bigcap\left(\prod_{i=1}^dB\left(\frac{p_i}{q},\frac{|\bm{\gamma}|}{q^{1+\eta}}\right)\right)\right),\]
		which implies that
		\begin{align*}
			\lambda\left(\mc{E}_{\bm{n},\infty}(\bm{\gamma},2|\bm{\gamma}|)\right)
			&\ge\lambda\left(\limsup_{q\rar\infty}\mc{E}_{\bm{n},q}'(\bm{\gamma})\right)\\
			&=\lambda\left(\left\{\bm{x}\in\bm{n}+[0,1)^d : \|q\bm{x}\|\le |\bm{\gamma}|q^{-\eta} \text{ for infinitely many } q\in\N\right\}\right).
		\end{align*}
		Therefore (since $\bm{\gamma}\not=\bm{0}$) by Theorem \ref{thm.KhinGros} we have that
		\begin{equation}\label{eqn.FullMeas2}
			\lambda\left(\mc{E}_{\bm{n},\infty}(\bm{\gamma},2|\bm{\gamma}|)\right)=1.
		\end{equation}
		
		We now wish to apply Lemma \ref{lem.CassHighD} but first we must deal with a small technicality, which is that the sets $\overline{\mc{E}}_{\bm{n},q}(\bm{\gamma},\epsilon)$ may not be finite unions of hypercubes. The issue is that along the boundaries of $\bm{n}+[0,1]^d$ these sets may contain connected components with arbitrarily large ratio of diameter to inradius (care must be taken since the Lebesgue density theorem, a key ingredient in the proof of Lemma \ref{lem.CassHighD}, does not hold in general when density is defined using collections of shapes like this). However, since $\epsilon/q^{1+\eta}<1/2q$ for large enough $q$, the number of connected components of $\overline{\mc{E}}_{\bm{n},q}(\bm{\gamma},\epsilon)$ which intersect the boundary of $\bm{n}+[0,1]^d$ is $\ll q^{d-1}$. Each of these connected components has volume
		$\ll q^{-d(1+\eta)}$. Since $\eta>0$ we have that
		\[\sum_{q\in\N}\frac{q^{d-1}}{q^{d(1+\eta)}}<\infty,\]
		so we deduce from the Borel-Cantelli lemma that removing these components from each of the sets $\overline{\mc{E}}_{\bm{n},q}(\bm{\gamma},\epsilon)$ does not affect the measure of the limsup set as $q\rar\infty$. However, after removing them we are left with sets which are finite unions of hypercubes in $\R^d$, with faces parallel to coordinate hyperplanes. Applying Lemma \ref{lem.CassHighD} to the resulting sets, we see that, for $\bm{\gamma}\not=\bm{0}$, \eqref{eqn.FullMeas1} follows from \eqref{eqn.FullMeas2}.
		
		Finally, by \eqref{eqn.AccumPtSets1} we have that
		\begin{align*}
			\bigcup_{\bm{\gamma}\in\Q^d}\left\{\bm{\alpha}\in\R^d : \bm{\gamma}\notin\mc{A}_\eta(\bm{\alpha})\right\}=	\bigcup_{\bm{\gamma}\in\Q^d}\left(\bigcap_{k\in\N}\mc{E}_{\infty}(\bm{\gamma},2^{-k})\right)^c=	\bigcup_{\bm{\gamma}\in\Q^d}\bigcup_{k\in\N}\mc{E}_{\infty}(\bm{\gamma},2^{-k})^c.
		\end{align*}
		The right hand side is a countable union of null sets, so we have that $\Q^d\subseteq \mc{A}_\eta(\bm{\alpha}),$ for almost every $\bm{\alpha}\in\R^d$. Since $\mc{A}_\eta(\balph)$ is a closed set, this concludes the proof of Theorem \ref{thm.LebMeasDens}.

		\section{Proof of Theorem \ref{thm.SingDens}}\label{sec.SingThm}
		We divide the proof of Theorem \ref{thm.SingDens} into two parts.
		
		\noindent \textbf{Upper bound for Hausdorff dimension:} For $\omega>0$, define
		\[S(\omega)=\left\{\balph\in K_d:\lim_{Q\rar\infty}Q^\omega\min_{0<q\le Q}\|q\balph\|=0\right\}.\]
		In order to prove the upper bound from \eqref{eqn.HDSingUpBd}, we first establish the following lemma.
		\begin{lemma}\label{lem.NonDenseSing}
			For $d\in\N$ and $0<\eta<1/d$, we have that
			\[\mc{D}(\eta)^c\setminus K_d^c\subseteq S(\omega),\]
			for all $\omega<1/(d\eta+d-1)$.
		\end{lemma}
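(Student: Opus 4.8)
The goal is to show that if $\balph\in K_d$ but $\balph\notin\mc{D}(\eta)$ — i.e. $\mc{A}_\eta(\balph)\ne\R^d$ — then $\balph\in S(\omega)$ for every $\omega<1/(d\eta+d-1)$; that is, $Q^\omega\min_{0<q\le Q}\|q\balph\|\to 0$. I would argue by contrapositive: assume that $\balph$ is \emph{not} in $S(\omega)$ for some admissible $\omega$, so that $\limsup_{Q\to\infty}Q^\omega\min_{0<q\le Q}\|q\balph\|>0$; equivalently there is a constant $\delta_0>0$ and arbitrarily large $Q$ with $\min_{0<q\le Q}\|q\balph\|\ge \delta_0 Q^{-\omega}$. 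The aim is then to deduce that $\mc{A}_\eta(\balph)=\R^d$, i.e. that for every $\bm{\gamma}\in\R^d$ and every $\epsilon>0$ there exist $q\in\Z\setminus\{0\}$ and $\bm{p}\in\Z^d$ satisfying \eqref{eqn.AccPtForm2}, namely $0<\|q\balph - \bm{\gamma}|q|^{-\eta}\|<\epsilon|q|^{-\eta}$.

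**Key steps.** First, fix $\bm{\gamma}$ and $\epsilon$. For a large $Q$ on which the lower bound $\min_{0<q\le Q}\|q\balph\|\ge\delta Q^{-\omega}$ holds (with $\delta=\delta_0$), apply Lemma \ref{lem.TransPrinc1} with $\delta$ replaced by $\delta Q^{-\omega}$: since $\min_{0<q\le Q}\|q\balph\|\ge \delta Q^{-\omega}$, the set $\{q\balph+\bm{p}:|q|\le Q',\bm{p}\in\Z^d\}$ is $\varrho$-dense in $\R^d$ with $Q'=\max\{Q,(\delta Q^{-\omega})^{-d}\}$ and $\varrho=\max\{\delta Q^{-\omega},(Q(\delta Q^{-\omega})^{d-1})^{-1}\}=\max\{\delta Q^{-\omega},\delta^{1-d}Q^{\omega(d-1)-1}\}$. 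The constraint $\omega<1/(d\eta+d-1)<1/(d-1)$ makes $\omega(d-1)-1<0$, so both terms in $\varrho$ tend to $0$ as $Q\to\infty$; moreover, because $\omega<1/(d\eta+d-1)$ one checks that $(\delta Q^{-\omega})^{-d}=\delta^{-d}Q^{d\omega}$ satisfies $Q'\asymp Q^{\max\{1,d\omega\}}$, so $\varrho\ll (Q')^{-\theta}$ for some $\theta>0$ related to $\eta$; the exponent bookkeeping is arranged precisely so that $\varrho = o\big((Q')^{-\eta}\big)$. Second, given this, the point $\bm{\gamma}(Q')^{-\eta}$ (or, more carefully, $\bm{\gamma}|q|^{-\eta}$ for the $q$ that will be produced) is within distance $\varrho$ of some $q\balph+\bm{p}$ with $|q|\le Q'$; rescaling, $\big|\,|q|^\eta(q\balph-(-\bm{p})) - \bm{\gamma}\,\big|\le |q|^\eta\varrho \le (Q')^\eta\varrho\to 0$. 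Third, handle the non-strict inequality / the possibility $q\balph-\bm{p}=\bm{0}$: since $\balph\in K_d$, the orbit $\{q\balph+\bm{p}\}$ never actually hits $\bm{\gamma}|q|^{-\eta}$ unless $\bm{\gamma}=\bm{0}$ and $q\balph\in\Z^d$, which cannot happen for $q\ne0$; a small perturbation argument (replace $\bm{\gamma}$ by a nearby point, or choose the approximating $q$ appropriately) secures the strict lower bound $0<\cdots$ in \eqref{eqn.AccPtForm2}. Letting $Q\to\infty$ through the sequence on which the lower bound holds then exhibits $\bm{\gamma}$ as an accumulation point of $\mathrm{NA}_\eta(\balph)$, and since $\bm{\gamma}$ was arbitrary, $\mc{A}_\eta(\balph)=\R^d$, contradicting $\balph\notin\mc{D}(\eta)$.

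**Main obstacle.** The delicate point is the exponent calculus: one must verify that the $\varrho$ produced by Lemma \ref{lem.TransPrinc1}, when expressed in terms of the effective search bound $Q'$ for $q$, decays strictly faster than $(Q')^{-\eta}$ — this is exactly what forces the threshold $\omega<1/(d\eta+d-1)$, and getting the two cases in the definitions of $Q'$ and $\varrho$ (i.e. which term achieves the max) to both yield the desired inequality requires care. A secondary nuisance is that Lemma \ref{lem.TransPrinc1} only gives $\varrho$-density for the specific scale $Q$, so one should be slightly careful that the $q$ whose scale $|q|^{-\eta}$ one uses to normalize $\bm{\gamma}$ is consistent with the $q$ extracted from the density statement; this is resolved by noting the estimates are uniform over $1\le |q|\le Q'$ and absorbing constants. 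Everything else — the reduction to \eqref{eqn.AccPtForm2}, the contrapositive framing, and the use of $\balph\in K_d$ to rule out exact hits — is routine given the transference machinery already available.
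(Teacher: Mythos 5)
Your overall route is the same as the paper's: argue the contrapositive, feed the lower bound $\min_{0<q\le Q}\|q\balph\|\ge \delta_0Q^{-\omega}$ into Lemma \ref{lem.TransPrinc1}, and check that the resulting density scale $\varrho$ beats $(Q')^{-\eta}$ exactly when $\omega<1/(d\eta+d-1)$; that exponent bookkeeping is correct. The genuine gap is the step you wave off as a ``secondary nuisance.'' The $\varrho$-density of $\{q\balph+\bm{p}:|q|\le Q'\}$ only lets you approximate one \emph{fixed} target, chosen before you know which $q$ witnesses the approximation. If you take the target to be $\bm{\gamma}(Q')^{-\eta}$, the witness $q$ may be far smaller than $Q'$ (nothing in Lemma \ref{lem.TransPrinc1} prevents it from being of bounded size), and then $|q|^{\eta}(q\balph-\bm{p})$ is within $|q|^{\eta}\varrho$ of $\bm{\gamma}(|q|/Q')^{\eta}$, not of $\bm{\gamma}$; for $|q|\ll Q'$ that point is near the origin, so the argument yields an error of order $|\bm{\gamma}|$ and the required bound $\|q\balph-\bm{\gamma}|q|^{-\eta}\|<\epsilon|q|^{-\eta}$ cannot be concluded once $\epsilon<|\bm{\gamma}|$. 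Choosing the target as ``$\bm{\gamma}|q|^{-\eta}$ for the $q$ that will be produced'' is circular, and uniformity over $1\le|q|\le Q'$ plus absorbing constants does not repair this: the whole difficulty is to force the witness $q$ to be comparable to the scale at which $\bm{\gamma}$ is normalized.

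The paper supplies exactly the missing device. Since $\varrho$-density is translation invariant, the set $\{q\balph+\bm{p}:Q_0-Q_1<q\le Q_0\}$, with $Q_1\asymp Q^{d\omega}$, is $(c_1/Q_1^{\eta'})$-dense for \emph{every} choice of $Q_0$, where $\eta'=\tfrac{1}{d\omega}-\tfrac{d-1}{d}>\eta$ precisely because $\omega<1/(d\eta+d-1)$. Taking $Q_0=\rho Q_1/(\rho-1)$ with $\rho=(1+\epsilon/|\bm{\gamma}|)^{1/\eta}$ confines every admissible $q$ to the window $Q_0/\rho<q\le Q_0$, of bounded multiplicative width, over which the moving target $\bm{\gamma}/q^{\eta}$ varies by less than $c_\epsilon/Q_1^{\eta}$; since $c_1/Q_1^{\eta'}<c_\epsilon/Q_1^{\eta}$ for $Q$ large, some $q$ in the window satisfies $\|q\balph-\bm{\gamma}/q^{\eta}\|<\epsilon/q^{\eta}$. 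Some such restriction of $q$ to a window whose endpoints have bounded ratio is essential, and without it your proof does not go through. A smaller inaccuracy: exact hits $\|q\balph-\bm{\gamma}|q|^{-\eta}\|=0$ occur precisely when $\bm{\gamma}\in\mathrm{NA}_\eta(\balph)$, not only when $\bm{\gamma}=\bm{0}$; the fix you suggest (perturb $\bm{\gamma}$ and use that $\mc{A}_\eta(\balph)$ is closed) is fine and is what the paper does by choosing $\bm{\gamma}\notin\mathrm{NA}_\eta(\balph)$ from the outset.
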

		\begin{proof}
			Suppose that $\omega$ satisfies the required bound and, without loss of generality, assume also that $\omega>1/d.$ If $\balph\notin S(\omega)\cup K_d^c$, then there are a constant $c_0>0$ and infinitely many $Q\in\N$ for which
			\[\min_{0<q\le Q}\|q\balph\|\ge \frac{c_0}{Q^\omega}.\]
			The transference principle from Lemma \ref{lem.TransPrinc1} then guarantees that
			\begin{equation}\label{eqn.TransfDenseSet}
				\left\{q\balph+\bm{p}:|q|\le Q',\bm{p}\in\Z^d\right\}
			\end{equation}
			is $\varrho$-dense in $\R^d$ where, using our assumption that $\omega>1/d$ and taking $Q$ sufficiently large,
			\[Q'=\frac{Q^{d\omega}}{c_0^d}\quad\text{and}\quad\varrho=\frac{1}{c_0^{d-1}Q^{1-(d-1)\omega}}.\]
			Translating the set in \eqref{eqn.TransfDenseSet} and relabeling, we have that, for any $Q_0\in\R$, the set
			\[\left\{q\balph+\bm{p}:Q_0-Q_1 <q\le Q_0, \bm{p}\in\Z^d\right\}\]
			is $(c_1/Q_1^{\eta'})$-dense, where
			\[Q_1=c_2Q^{d\omega},\quad\eta'=\frac{1}{d\omega}-\frac{d-1}{d},\]
			and $c_1$ and $c_2$ are positive constants which depend only on $c_0$ and $d$. Note that the assumption that $\omega<1/(d\eta+d-1)$ guarantees that $\eta'>\eta$.
			
			Now, choose $\bm{\gamma}\in\R^d\setminus\{\bm{0}\}$ with $\bm{\gamma}\notin\mathrm{NA}_\eta(\balph).$ We wish to show that, for any $\epsilon>0$, there exists a $q\in\N$ for which 
			\begin{equation}\label{eqn.AccPtForm3}
				\left\|q\bm{\alpha}-\frac{\bm{\gamma}}{q^{\eta}}\right\|<\frac{\epsilon}{q^{\eta}}.
			\end{equation}
			In order to do this, first set
			\[\rho=\left(1+\frac{\epsilon}{|\bm{\gamma}|}\right)^{1/\eta}\quad\text{and}\quad c_\epsilon=\frac{(\rho-1)^\eta(\rho^\eta-1)|\bm{\gamma}|}{\rho^\eta},\]
			choose $Q$ large enough so that
			\[\frac{c_1}{Q_1^{\eta'}}<\frac{c_\epsilon}{Q_1^\eta},\]
			and then set
			\[Q_0=\frac{\rho Q_1}{\rho-1}.\]
			Then $Q_0-Q_1=Q_0/\rho$ and so, as $q$ varies from $Q_0-Q_1$ to $Q_0$, the $i$th component of $\bm{\gamma}/q^\eta$ changes by at most
			\[\frac{(\rho^\eta-1)|\gamma_i|}{Q_0^\eta}<\frac{\epsilon}{Q_0^\eta}=\frac{c_\epsilon}{Q_1^\eta}.\]
			It therefore follows that there is a $Q_0-Q_1<q\le Q_0$ for which \eqref{eqn.AccPtForm3} is satisfied. Since $\epsilon$ is arbitrary, this shows that $\balph\in\mc{D}(\eta)$, which completes the proof of the lemma.
		\end{proof}
		Note that the $d=1$ case of this lemma (together with the fact that $S(1)=\emptyset$ when $d=1$) implies the $d=1$ case of Theorem \ref{thm.SingDens}.
	
		Combining the result of Lemma \ref{lem.NonDenseSing} with that of Lemma \ref{lem.SingTrans}, we have that, for $d\ge 2,$ $0<\eta<1/d$, and $\omega<1/(d\eta+d-1)$,
		\[\mc{D}(\eta)^c\setminus K_d^c\subseteq E(d-1+d\omega).\]
		By Theorem \ref{thm.SingHDBds} this implies that
		\[\dim_H(\mc{D}(\eta)^c\setminus K_d^c)\le d-2+\frac{2d+2}{d+d\omega},\]
		and letting $\omega$ tend to $1/(d\eta+d-1)$ gives the bound \eqref{eqn.HDSingUpBd} in the statement of Theorem \ref{thm.SingDens}.
		
		\noindent \textbf{Lower bound for Hausdorff dimension:} To prove the lower bound \eqref{eqn.HDSingLowBd}, we will show that, for $d\ge 2$ and $0<\eta<1/d$, the set $\mc{D}(\eta)^c\setminus K_d^c$ contains a subset of $E(1+1/\eta)$ with Hausdorff dimension bounded below by the required amount. To make this precise, for $\omega\ge d$ define
		\[E_+(\omega)=\left\{\balph\in K_d:\lim_{R\rar\infty}R^\omega\min_{}\left\{\|\bm{r}\balph\|:\bm{r}\in\Z^d\setminus\{\bm{0}\},~ 0\le r_1,\ldots, r_d\le R\right\}=0\right\}.\]
		First we will prove the following lemma, using a modification of the proof of \cite[Chapter 5, Theorem XV]{Cass1957}.
		\begin{lemma}\label{lem.Sing+Cont}
			For $d\ge 2$ and $0<\eta<1/d$, we have that
			\[E_+(1+1/\eta)\subseteq \mc{D}(\eta)^c\setminus K_d^c.\]
		\end{lemma}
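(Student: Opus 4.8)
Since membership in $K_d$ is part of the definition of $E_+(\omega)$, we have $E_+(1+1/\eta)\cap K_d^c=\emptyset$, so it remains to prove that $E_+(1+1/\eta)\cap\mc{D}(\eta)=\emptyset$. Fix $\balph\in E_+(1+1/\eta)$; I want to exhibit a single point $\bm{\gamma}_0\in\R^d$ that is not an accumulation point of $\NA$. By the reformulation recorded in \eqref{eqn.AccPtForm2}, it suffices to produce $\bm{\gamma}_0$ and $\epsilon_0>0$ with
\[\left\|q\balph-\frac{\bm{\gamma}_0}{|q|^\eta}\right\|\ge\frac{\epsilon_0}{|q|^\eta}\qquad\text{for all }q\in\Z\setminus\{0\}.\]
The plan is to take $\bm{\gamma}_0=c\,(1,\dots,1)^t$ for a small constant $c=c(d)>0$, and, for each $q$, to play $\bm{\gamma}_0$ against the integer vector supplied by the hypothesis $\balph\in E_+(1+1/\eta)$ at the matching parameter $R=|q|^\eta$.

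In detail, fix a small $\epsilon>0$ (to be chosen after $c$). For all sufficiently large $|q|$, applying the hypothesis with $R=|q|^\eta$ yields $\bm{r}\in\Z^d\setminus\{\bm{0}\}$ with $0\le r_1,\dots,r_d\le|q|^\eta$ and $\|\bm{r}\balph\|<\epsilon R^{-(1+1/\eta)}=\epsilon|q|^{-(1+\eta)}$; the exponent $1+1/\eta$ is exactly what makes $|q|\,\|\bm{r}\balph\|<\epsilon/R$ after this substitution. Write $\bm{r}\balph=M+\psi$ with $M\in\Z$ and $|\psi|=\|\bm{r}\balph\|$, and put $S=r_1+\dots+r_d$, so that $1\le S\le dR$ and, since the entries of $\bm{r}$ are nonnegative, $\bm{r}\cdot\bm{\gamma}_0=cS$. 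Then
\[\bm{r}\cdot\!\left(q\balph-\frac{\bm{\gamma}_0}{|q|^\eta}\right)=qM+\left(q\psi-\frac{cS}{R}\right),\]
so the distance from the left-hand side to $\Z$ is at least $\mathrm{dist}(cS/R,\Z)-|q\psi|$. Choosing $c\le 1/(2d)$ forces $cS/R\in(0,1/2]$, so $\mathrm{dist}(cS/R,\Z)=cS/R$; choosing then $\epsilon\le c/2$ and using $|q\psi|<\epsilon/R$ leaves $\mathrm{dist}\bigl(\bm{r}\cdot(q\balph-\bm{\gamma}_0/|q|^\eta),\Z\bigr)\ge cS/(2R)$.

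To convert this into the desired bound on $\|q\balph-\bm{\gamma}_0/|q|^\eta\|$, I use the elementary fact that $\mathrm{dist}(\bm{s}\cdot\bm{x},\Z)\le(s_1^2+\dots+s_d^2)^{1/2}\,\mathrm{dist}(\bm{x},\Z^d)$ for every $\bm{s}\in\Z^d\setminus\{\bm{0}\}$ (apply Cauchy--Schwarz to $\bm{x}$ minus its nearest integer point), together with the inequality $(r_1^2+\dots+r_d^2)^{1/2}\le r_1+\dots+r_d=S$, which holds precisely because the $r_i$ are nonnegative. These give
\[\left\|q\balph-\frac{\bm{\gamma}_0}{|q|^\eta}\right\|\ge\frac{cS/(2R)}{(r_1^2+\dots+r_d^2)^{1/2}}\ge\frac{c}{2R}=\frac{c/2}{|q|^\eta}\]
for all sufficiently large $|q|$, so $\epsilon_0=c/2$ works for those $q$. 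For the finitely many remaining small $q$, note that $\balph\in K_d$ together with $d\ge2$ forces $\alpha_i-\alpha_j\notin\Q$ for $i\ne j$, so $\bm{\gamma}_0=c\,(1,\dots,1)^t$ can never lie in $\NA$; hence $\|q\balph-\bm{\gamma}_0/|q|^\eta\|>0$ for every $q$, and decreasing $\epsilon_0$ to absorb this finite set shows $\bm{\gamma}_0\notin\mc{A}_\eta(\balph)$ and therefore $\balph\notin\mc{D}(\eta)$.

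This is the proof of \cite[Chapter 5, Theorem XV]{Cass1957}, adapted to the one-sided quantity in the definition of $E_+(\omega)$ and arranged so as to output the normalization exponent $\eta$. The step that carries the argument is the choice $\bm{\gamma}_0=c\,(1,\dots,1)^t$, which is what links $\bm{r}\cdot\bm{\gamma}_0$ to $S=r_1+\dots+r_d$; the nonnegativity of $\bm{r}$ built into $E_+$ is then exactly what makes $S$ dominate the Euclidean length of $\bm{r}$ in the transfer step. The other point to watch is that one cannot use a single scale $R$: a fixed $R$ only controls $q$ of size about $R^{1/\eta}$, so one must run the estimate separately for each $q$ with $R=|q|^\eta$, and it is this that pins the exponent down to $1+1/\eta$. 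Beyond choosing $c$, $\epsilon$, $\epsilon_0$ in the right order, I do not expect any further difficulty.
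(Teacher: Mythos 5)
Your proof is correct and takes essentially the same route as the paper's: both are one-sided adaptations of the Cassels \cite[Chapter 5, Theorem XV]{Cass1957} argument in which the nonnegative vector $\bm{r}$ supplied by $E_+(1+1/\eta)$ is paired against a target proportional to $(1,\dots,1)$ so that $\bm{r}\cdot\bm{\gamma}_0$ is comparable to $r_1+\cdots+r_d\ge 1$; the paper merely runs it as a contradiction with $(1,\dots,1)\in\mc{A}_\eta(\balph)$ at scale $R=|q|^\eta/(2d)$, while you give a direct uniform lower bound. The only quibble is your transfer step: since the paper's $\|\cdot\|$ is the sup-norm, the Cauchy--Schwarz version costs an extra factor $\sqrt{d}$ (or simply use $|\bm{r}\cdot(\bm{x}-\bm{p})|\le (r_1+\cdots+r_d)\,\|\bm{x}\|$), which only changes $\epsilon_0$ to $c/(2\sqrt{d})$ and does not affect the conclusion.
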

		\begin{proof}
		Suppose by way of contradiction that $\balph\in E_+(1+1/\eta)\cap \mc{D}(\eta)$. Choose $\epsilon$ so that
		\[\epsilon<\frac{1}{2(2d)^{1+1/\eta}},\]
		and let $R_0$ be large enough so that, for every $R\ge R_0$, there exists $\bm{r}\in \Z^d\setminus\{\bm{0}\}$ with $0\le r_1,\ldots, r_d\le R$ and
		\begin{equation}\label{eqn.SingLemInc1}
			\|\bm{r}\balph\|\le \frac{\epsilon}{R^{1+1/\eta}}.
		\end{equation}
		
		Since $(1,1,\ldots,1)\in\mc{A}_\eta(\balph)$, we can find $q\in\Z$ with $|q|^\eta>2dR_0$, such that
		\[	0<\max_{1\le i\le d}\left\|q\alpha_i-\frac{1}{|q|^{\eta}}\right\|<\frac{\epsilon}{|q|^{\eta}}.\]
		If we then take $R=|q|^\eta/(2d)$ and $\bm{r}$ as above, non-zero and with non-negative coordinates, and satisfying $|\bm{r}|\le |q|^\eta/(2d)$ and \eqref{eqn.SingLemInc1}, we have that
		\begin{align*}
		\frac{r_1+\cdots+r_d}{|q|^\eta}&=\left\|\frac{r_1+\cdots +r_d}{|q|^\eta}\right\|\\
		&=\left\|\sum_{i=1}^d r_i\left(q\alpha_i-\frac{1}{|q|^\eta}\right)-q(\bm{r}\balph)\right\|\\
		&\le\sum_{i=1}^dr_i\left\|q\alpha_i-\frac{1}{|q|^\eta}\right\|+|q|\|\bm{r}\balph\|\\
		&<\frac{\epsilon(r_1+\cdots+r_d)}{|q|^\eta}+\frac{\epsilon (2d)^{1+1/\eta}}{|q|^\eta}.
		\end{align*}
		By our choice of $\epsilon$, this is a contradiction. Therefore the statement of the lemma is established.
		\end{proof}		
		The lower bound in the statement of Theorem \ref{thm.SingDens} follows from combining Lemma \ref{lem.Sing+Cont} with the following result.
		\begin{lemma}\label{lem.Sing+DimLowBd}
			For $d\ge 2$ and $\omega\ge d$, we have that
			\[\dim_H(E_+(\omega))\ge \frac{d}{\omega}.\]
		\end{lemma}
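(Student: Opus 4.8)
The plan is to construct, for each $\omega \ge d$, a Cantor-like subset of $E_+(\omega)$ together with a mass distribution on it, and then apply the mass distribution principle (Lemma \ref{lem.MassTransPrinc}) to conclude $\dim_H(E_+(\omega)) \ge d/\omega$. The basic idea, following the classical Jarník–Besicovitch lower bound constructions, is to build $\balph$ by successively forcing the existence of integer vectors $\bm{r}$ with non-negative coordinates bounded by some rapidly growing sequence $R_k$ making $\|\bm{r}\balph\|$ extremely small (of order $R_k^{-\omega}$ or smaller), while retaining enough freedom to keep the construction $d$-dimensional and to stay inside $K_d$.

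First I would fix a rapidly increasing sequence of integers $R_1 < R_2 < \cdots$ (say $R_{k+1} = R_k^{M}$ for a large parameter $M$, or a tower-type sequence, the precise choice to be optimized at the end). At stage $k$ one has a collection of base-$d$-dimensional cubes (``boxes'') $C$ of sidelength roughly $R_k^{-\omega}$, and inside each such box one chooses a sub-collection of $\asymp R_{k+1}^{d}/R_k^{?}$ disjoint sub-boxes of sidelength roughly $R_{k+1}^{-\omega}$, each centred at a point $\balph$ for which there is an $\bm{r} \in \Z^d \setminus \{\bm{0}\}$ with $0 \le r_1,\dots,r_d \le R_{k+1}$ and $\|\bm{r}\balph\|$ of size at most $R_{k+1}^{-\omega - 1}$, say. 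The point is that the set of $\balph$ in a fixed box for which such an approximation exists is, locally, a union of small neighbourhoods of affine hyperplane pieces $\{\balph : \bm{r}\balph \equiv \bm{p} \pmod 1\}$, and counting $\bm{r}$ with non-negative coordinates up to $R_{k+1}$ together with the admissible $\bm{p}$ gives $\gg R_{k+1}^{d}$ such pieces passing through a box of sidelength $\gg R_k^{-\omega}$; spacing them out yields the desired number of children. Defining $\mu$ by distributing mass equally among children at each level gives a Borel probability measure supported on the limiting Cantor set $F_\omega \subseteq E_+(\omega)$. (That $F_\omega \subseteq K_d$ can be arranged by an elementary modification: at each stage discard the boundedly many children that lie on one of the finitely many relevant rational hyperplanes, which does not affect the counting.)

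Next I would verify the ball condition $\mu(B) \ll \diam(B)^{d/\omega}$ for all balls $B$ of small diameter, by the standard dichotomy: given $B$ with $\diam(B) = \rho$, locate the generation $k$ with $R_{k+1}^{-\omega} \le \rho < R_k^{-\omega}$ (roughly), bound $\mu(B)$ by the number of generation-$(k+1)$ boxes meeting $B$ times the mass of each, and compare. When $\rho$ is comparable to a $k$-th level sidelength the bound is immediate from the uniform mass splitting; when $\rho$ is intermediate one uses that the generation-$(k+1)$ boxes inside a given generation-$k$ box are separated by $\gg R_k^{-\omega}/R_{k+1}^{d/\ldots}$ so that $B$ can meet at most $\ll (\rho R_{k+1}^{?})^{d} \cdot (\text{mass})$ of them, and the exponent works out to $d/\omega$ provided the $R_k$ grow fast enough to swamp the lower-order factors coming from the separation and from the logarithmic constants. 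Finally, letting the growth rate of $(R_k)$ tend to infinity sends any loss in the exponent to zero, giving $\dim_H(F_\omega) \ge d/\omega$ and hence the lemma.

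The main obstacle I expect is the counting/separation bookkeeping in the inductive step: one must simultaneously guarantee that each parent box has $\gg R_{k+1}^{d}$ genuinely distinct, well-separated children (so that the measure is spread out enough for the ball estimate) and that all of these children actually lie inside the parent (so the nesting, and hence $F_\omega \subseteq E_+(\omega)$, is valid). This requires carefully matching the sidelength $R_{k+1}^{-\omega}$ of the children against both the sidelength $R_k^{-\omega}$ of the parent and the natural spacing $\asymp 1/R_{k+1}$ of the hyperplane family $\{\bm{r}\balph \equiv \bm{p}\}$, and it is here that the condition $\omega \ge d$ is used — it ensures $R_{k+1}^{-\omega} \le R_{k+1}^{-d} \ll$ (spacing)$^{d}$ per available box, so that the $\gg R_{k+1}^d$ hyperplane slabs can be thinned to disjoint children without overcrowding. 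Everything else is the routine Cantor-set-plus-mass-distribution machinery.
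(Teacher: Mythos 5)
There is a genuine gap: you have built a limsup (Jarn\'{i}k--Besicovitch-type) Cantor set, but $E_+(\omega)$ is defined by a \emph{uniform} condition --- for every sufficiently large $R$ there must exist a non-negative $\bm{r}$ of height at most $R$ with $\|\bm{r}\balph\|=o(R^{-\omega})$ --- and with your numerology the inclusion $F_\omega\subseteq E_+(\omega)$ fails. A point $\balph$ of your limit set lies in a stage-$k$ box of side roughly $R_k^{-\omega}$ whose \emph{center} is well approximated by some $\bm{r}^{(k)}$ with $0\le r^{(k)}_i\le R_k$; since $|\bm{r}^{(k)}|$ may be as large as $R_k$, at $\balph$ itself you only control $\|\bm{r}^{(k)}\balph\|\lesssim R_k\cdot R_k^{-\omega}=R_k^{1-\omega}$. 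For $R$ just below $R_{k+1}$ the only vectors of height $\le R$ your construction supplies are those from stages $\le k$, so nothing guarantees a value better than about $R_k^{1-\omega}$, which is vastly larger than $R^{-\omega}\approx R_{k+1}^{-\omega}$ once $R_{k+1}\gg R_k$ --- and rapid growth of $R_k$ is exactly what your dimension count needs. The repair is what the paper (following Baker) does: each child box must have side comparable to $1/(R_kR_{k+1}^{\omega})$, i.e.\ shrunk by the height of the \emph{previous} vector, must be centered essentially on the \emph{old} hyperplane so that $\|\bm{r}^{(k)}\bm{\theta}\|<R_{k+1}^{-\omega}$ holds for \emph{every} $\bm{\theta}$ in the child (conditions (ii)--(iv) in the paper's version of Baker's lemma), and its center must simultaneously lie on a \emph{new} hyperplane of height $\le R_{k+1}$ to continue the induction (condition (vi)). This forces the children to sit along a $(d-1)$-dimensional slice of the parent, so the correct child count is $t\gg\diam(C)^{d-1}R_{k+1}^{d}$ and the covering exponent in the ball estimate is $d-1$, not $d$ as in your sketch.

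You also do not engage with the one place where the non-negativity constraint genuinely bites, which is the only new idea in the paper's proof relative to Baker: to put a prospective child center on a new hyperplane compatible with the old one, Baker uses Siegel's lemma to solve $a_1y_1+\cdots+a_{d-1}y_{d-1}+cq=0$ with $0<|\bm{y}|\le (dN)^{1/(d-1)}$, and one cannot in general force $y_i\ge 0$ with that size bound. The paper instead takes the explicit choice $\bm{y}=(c,\ldots,c)$, $q=-(a_1+\cdots+a_{d-1})$, accepting the much weaker bound $|\bm{y}|\le N$; this loss is precisely why the lemma claims only $d/\omega$ rather than Baker's $d-2+d/\omega$ for $E(\omega)$. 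Merely counting non-negative vectors $\bm{r}$ whose hyperplanes pass through the parent box, as you do, does not address this compatibility problem, and the role you assign to the hypothesis $\omega\ge d$ (child size versus slab spacing) is not where the difficulty actually lies.
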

		\begin{proof}
		We will explain how to derive this result using a technical modification in the proof of the lower bound for $\dim_H(E(\omega))$ in \cite{Bake1977}, together with an application of the mass distribution principle. The key modification is to the part of the proof of \cite[Lemma 3]{Bake1977} at the top of page 382 of that article, where the author appeals to a special case of Siegel's lemma to deduce that, if $a_1,\ldots ,a_{d-1},c$ are non-negative integers bounded by $N$, with $c>0,$ then there exist $\bm{y}\in\Z^{d-1}$ and $q\in\Z$ such that
		\[a_1y_1+\cdots+a_{d-1}y_{d-1}+cq=0\quad\text{and}\quad 0<|\bm{y}|\le (dN)^{1/(d-1)}.\]
		Here we would like to choose $\bm{y}$ so that all of its components are non-negative. Unfortunately, it is not possible to do this in general, while maintaining good control over the size of $|\bm{y}|$. Therefore we settle (at the cost of some loss in our final lower bound for the Hausdorff dimension) for the choice
		\[\bm{y}=(c,c,\ldots ,c)\quad\text{and}\quad q=-(a_1+\cdots +a_{d-1}),\]
		which satisfies the much weaker bound $|\bm{y}|\le N$. Then, by the same argument in the proof of \cite[Lemma 3]{Bake1977}, but with $N=X$, we obtain the following result.		
		\begin{itemize}[itemsep=10bp,parsep=10bp, topsep=0bp]
			\item Suppose that $\omega_0> d$, that $C\in [0,1)^d$ is a cube with center $\bm{\Phi}$ satisfying the equation
			\begin{equation}\label{eqn.SingLemHyperplane}
				\bm{x}\bm{\Phi}+r=0,
			\end{equation}
			where $\bm{x}\in\Z^d, r\in\Z$, and $|\bm{x}|\le A$. For all sufficiently large $X$ (depending on $C, A,$ and $\omega$), there are points $\balph_1,\ldots ,\balph_t$, with $\bm{x}\balph_i+r=0$ for each $i$, and pairwise disjoint cubes $B_1,\ldots ,B_t$ in $C$ such that, for each $1\le i,j\le t$,
			\begin{itemize}[itemsep=5bp,parsep=10bp, topsep=0bp]
				\item[(i)] $\displaystyle |\balph_i-\balph_j|\ge X^{-d/(d-1)}$ for $i\not=j$,
				\item[(ii)] The center $\bm{\beta}_i$ of each $B_i$ satisfies
				$\displaystyle |\balph_i-\bm{\beta}_i|\le 1/(2dAX^{\omega_0}),$
				\item[(iii)] Each $B_i$ has side length $1/(2d^2AX^{\omega_0})$,
				\item[(iv)] For all $\bm{\theta}\in B_i$, we have $\|\bm{x}\bm{\theta}\|<X^{-\omega_0}$,
				\item[(v)] $B_i$ does not intersect any plane with equation 
				\[\bm{z}\bm{\theta}+s=0,\]
				with $\bm{z}\in\Z^d, s\in\Z$, and $|\bm{z}|\le A$,
				\item[(vi)] Each $\bm{\beta}_i$ lies on a plane with equation
				\[\bm{y}^{(i)}\bm{\theta}+q_i=0,\]
				with $\bm{y}^{(i)}\in\Z^d\setminus\{\bm{0}\}, q_i\in\Z$, and $0\le\bm{y}^{(i)}_k\le X$ for $1\le k\le d$, and
				\item[(vii)] $t\gg \mathrm{diam}(C)^{d-1}X^d$, where the implied constant depends only on $d$.
			\end{itemize}
		\end{itemize}
		One key difference between what we are presenting here and the lemma cited above is in point (vi), where we now have that all of the components of the vectors $\bm{y}^{(i)}$ are non-negative. Using this result, we build a Cantor set $\mc{E}$ and a probability mass distribution $\mu$ supported on $\mc{E}$, by starting with the unit cube and successively dividing each of the cubes constructed at each stage into subcubes, distributing the mass of each cube evenly among the subcubes, and then taking the weak-$^\ast$ limit as this process tends to infinity. Note that condition (vi) allows us to iterate this construction. Conditions (iv) and (v) then guarantee that $\mc{E}\subseteq E_+(\omega_1),$ for all $\omega_1<\omega_0$.
		
		
		Now we apply the mass distribution principle. At the $k$th level of our construction of $\mc{E}$ we write $A=Q_{k-1}$ and $X=Q_k$. Suppose in what follows that $k\ge 4.$ Each cube from the $(k-1)$st level has diameter \[\gg\frac{1}{Q_{k-2}Q_{k-1}^{\omega_0}},\]
		so by (vii) the number $m_k$ of cubes constructed at the $k$th level, from each $(k-1)$st level cube, satisfies
		\[m_k\gg \frac{Q_k^d}{Q_{k-2}^{d-1}Q_{k-1}^{\omega_0(d-1)}}.\]
		Furthermore, by (i) and (ii), these cubes are separated by
		\[\epsilon_k\gg Q_k^{-d/(d-1)}.\]
		
		Suppose that $B$ is any set in $\R^d$ with $\epsilon_k\le \mathrm{diam}(B)<\epsilon_{k-1}$. Then $B$ can intersect at most one cube from the $(k-1)$st level of our construction. The number of cubes from the $k$th level which it intersects is bounded above by $m_k$ and, by virtue of (i), (ii), and the fact that the $\balph_i$ all lie on the hyperplane \eqref{eqn.SingLemHyperplane} (see the proof of \cite[Lemma 4]{Bake1977} for details), this number is also
		\[\ll \left(\frac{\mathrm{diam}(B)}{\epsilon_k}\right)^{d-1}.\]
		Therefore, the number of $k$th level cubes which intersect $B$ is
		\[\ll \min\left(m_k,\left(\frac{\mathrm{diam}(B)}{\epsilon_k}\right)^{d-1}\right)\le \frac{m_k^{1-s/(d-1)}\mathrm{diam}(B)^s}{\epsilon_k^s},\]
		for any $s\in[0,d-1]$. For our mass distribution $\mu$ this means that
		\[\frac{\mu(B)}{\diam(B)^s}\ll\frac{1}{m_1\cdots m_{k-1}\left(m_k^{1/(d-1)}\epsilon_k\right)^s}.\]
		We would like to choose $s$ so that the right hand side above is bounded above by a fixed constant. Therefore consider
		\begin{align*}
		&\frac{\log (m_1\cdots m_{k-1})}{-\log\left(m_k^{1/(d-1)}\epsilon_k\right)}\\
		&\hspace*{40bp}\gg\frac{d\log Q_{k-1}-(d-1)\log(Q_1\cdots Q_{k-3})-(\omega_0(d-1)-d)\log(Q_1\cdots Q_{k-2})}{\omega_0\log Q_{k-1}+\log Q_{k-2}}.
		\end{align*}
		By choosing $Q_k$ large enough, depending on $Q_{k-1}$,  at each stage of our construction, we can ensure that the quantity above tends to $d/\omega_0$ as $k\rar\infty$. This guarantees that, as long as $s<d/\omega_0$, we have that
		\[\mu(B)\ll\diam(B)^s.\]
		The mass distribution principle (Lemma \ref{lem.MassTransPrinc}) then implies that
		\[\dim_H(\mc{E})\ge\frac{d}{\omega_0}.\]
		Taking the limit as $\omega_0\rar\omega^+$ completes the proof of Lemma \ref{lem.Sing+DimLowBd}.
		\end{proof}

		\section{Proof of Theorem \ref{thm.JarBesDens}}\label{sec.JarBesThm}
		In this section we suppose that $d\in\N$ and that $\eta>1/d$. Note that, since $\mc{D}(\eta)\subseteq\mc{W}(\eta),$ we have from the Jarn\'{i}k-Besicovitch theorem that
		\[\dim_H(\mc{D}(\eta))\le \frac{d+1}{1+\eta}.\]
		Therefore, to establish Theorem \ref{thm.JarBesDens}, it suffices to prove the lower bound for the Hausdorff dimension. For this we will make use of the following result, taken from \cite[Lemma 2]{Bake1977}.
		\begin{lemma}\label{lem.RatPtCount}
		Suppose that $C$ is a cube in $\R^d$ and that $U$ is a closed subset of $E$ with $d$-dimensional Lebesgue measure zero. Then, for all sufficiently large $Q$ (depending on $C$ and $U$), there are rational points $\bm{\beta}_1,\ldots,\bm{\beta}_t$ in $C\setminus U$ satisfying the following properties:
		\begin{itemize}[itemsep=5bp,parsep=10bp, topsep=0bp]
			\item[(i)] For each $i$, $\bm{\beta}_i=(a_{1,i}/q_i,\ldots ,a_{d,i}/q_i)$, with $a_{1,i},\ldots,a_{d,i},q_i\in\Z,$ and $1 \le q_i\le Q$,
			\item[(ii)] $\displaystyle |\bm{\beta}_i-\bm{\beta}_j|\ge Q^{-(d+1)/d}$ whenever $i\not=j$, and
			\item[(iii)] $t\ge c_0\hspace*{2bp}\diam (C)^dQ^{d+1}$, for some constant $c_0$ depending only on $d$.
		\end{itemize}
		\end{lemma}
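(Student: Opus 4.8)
The plan is to read the statement as a counting-and-selection problem for the set $F_Q$ of rational points $\bm{a}/q$ with $1\le q\le Q$ lying in a fixed bounded neighbourhood of $C$, and to extract from $F_Q\setminus U$ a well-separated subfamily whose cardinality is a positive proportion of the largest possible size of a $Q^{-(d+1)/d}$-separated subset of $C$ (which is $\gg\diam(C)^dQ^{d+1}$). The argument splits into three steps, of which only the last is substantial.

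First, a routine lattice-point count: for each $q$ the cube $C$ contains $\gg (q\,\diam(C))^d$ reduced fractions with denominator $q$, so summing over $Q/2<q\le Q$ gives $\#(F_Q\cap C)\gg \diam(C)^dQ^{d+1}$. I would also record the companion upper estimate, proved the same way: a box of side $s\ge 1/Q$ contains $\ll s^dQ^{d+1}$ points of $F_Q$, while a box of side $s<1/Q$ contains at most $Q$ of them (at most one per denominator). Second, discard the points of $F_Q$ near $U$: since $U\cap C$ is compact with $\lambda(U)=0$, cover it by finitely many open cubes of total measure $<\epsilon$; for all large $Q$ these cubes have side $\ge 1/Q$, so by the estimate just recorded they contain $\ll\epsilon Q^{d+1}$ points of $F_Q$, and choosing $\epsilon$ small (in terms of $\diam(C)$ and the implied constants) removes at most, say, a quarter of the points, leaving $\gg\diam(C)^dQ^{d+1}$ of them in $C\setminus U$.

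Third, I would thin this family to one that is $Q^{-(d+1)/d}$-separated while losing only a bounded factor, which is what (ii) and (iii) together demand, since $Q^{-(d+1)/d}$ is exactly the mean spacing of $F_Q$. A greedy extraction of a maximal $Q^{-(d+1)/d}$-separated subfamily only yields $\#(F_Q\cap(C\setminus U))$ divided by the worst-case local multiplicity $\max_{\bm{x}}\#\bigl(F_Q\cap B(\bm{x},Q^{-(d+1)/d})\bigr)$, and this multiplicity is genuinely large because bounded-height rationals accumulate near rational affine subspaces. The remedy is to first excise these accumulations — the points of $F_Q$ within a suitable constant multiple of $Q^{-(d+1)/d}$ of the low-height rational affine subspaces — using that the union of these neighbourhoods is Lebesgue-null and, crucially, at scale $Q^{-(d+1)/d}$ is sparse enough that it meets only a small proportion of $F_Q$ (so the excision is affordable), and then to show that what survives is spread out at scale $Q^{-(d+1)/d}$, so that a maximal separated subfamily of it has size $\gg\diam(C)^dQ^{d+1}$. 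Relabelling the survivors as $\bm{\beta}_1,\dots,\bm{\beta}_t$ then gives (i) by construction, (ii) by separation, and (iii) with $c_0$ absorbing the constants, all of which depend only on $d$.

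I expect the main obstacle to be precisely this third step: pinning down which neighbourhoods of which rational subspaces must be removed, and proving simultaneously that the removed part is a small fraction of $F_Q$ and that the remainder is evenly distributed at the critical scale. This is the content that makes the cited lemma of Baker nontrivial; the first two steps are essentially bookkeeping.
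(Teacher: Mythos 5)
Your steps 1 and 2 are sound bookkeeping, but the proposal has a genuine gap exactly where you place it: step 3 is not an argument, only a declaration of intent. Note first that the paper itself does not prove this lemma at all --- it is quoted verbatim from Lemma 2 of Baker's 1977 paper --- so the relevant comparison is with Baker's argument. Your plan requires (a) specifying a family of rational affine subspaces of every dimension together with a height cutoff, (b) proving that the points of $F_Q$ lying within $O(Q^{-(d+1)/d})$ of some member of the family are a small proportion of $F_Q\cap C$, and (c) proving that away from these neighbourhoods a ball of radius $Q^{-(d+1)/d}$ contains only $O(1)$ points of $F_Q$. None of (a)--(c) is supplied, and they are of essentially the same depth as the lemma itself: for instance, in $d=2$ a ball of radius $Q^{-3/2}$ centred on a rational line of bounded height can contain $\asymp Q^{1/2}$ points of $F_Q$, and the same phenomenon recurs near rational subspaces of every intermediate dimension and a whole range of heights, so the excision threshold and the multiplicity bound have to be played off against each other; your sketch offers no mechanism for doing this, and it is not evident that a single scale and height cutoff even works. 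As it stands, the crucial separation-with-positive-proportion claim (which is what makes (ii) and (iii) compatible) is unproved.

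You can avoid the multiplicity problem entirely by the measure-theoretic covering argument that underlies the cited lemma. By Dirichlet's theorem every $\bm{x}\in C$ admits $q\le Q$ and $\bm{a}\in\Z^d$ with $|\bm{x}-\bm{a}/q|\le 1/(qQ^{1/d})$. The set of $\bm{x}\in C$ for which every such $q$ is at most $\delta Q$ is contained in $\bigcup_{q\le \delta Q}\{\bm{x}\in C:\|q\bm{x}\|\le Q^{-1/d}\}$, whose measure is at most $2^d\delta\,\lambda(C)$ up to boundary effects; taking $\delta=2^{-d-1}$, at least half of $C$ in measure lies within $\delta^{-1}Q^{-(d+1)/d}$ of a rational point with denominator in $(\delta Q,Q]$. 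Remove in addition a finite open cover of $U\cap C$ of measure $<\lambda(C)/4$ (your step 2), and call $G$ the remaining set, of measure $\ge\lambda(C)/4$; for large $Q$ every rational point within $\delta^{-1}Q^{-(d+1)/d}$ of $G$ lies in $C\setminus U$ (shrinking $C$ slightly if needed, which only changes the constant in (iii)). Now let $\bm{\beta}_1,\ldots,\bm{\beta}_t$ be a maximal $Q^{-(d+1)/d}$-separated subfamily of these rational points. By maximality, balls of radius $(1+\delta^{-1})Q^{-(d+1)/d}$ about the $\bm{\beta}_i$ cover $G$, so $t\,\bigl(2(1+\delta^{-1})Q^{-(d+1)/d}\bigr)^d\ge\lambda(G)\gg\diam(C)^d$, which is (iii) with a constant depending only on $d$; (i) and (ii) hold by construction. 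This covering argument replaces your counting-plus-multiplicity scheme and is the route you should either adopt or, if you insist on the direct thinning approach, you must supply complete proofs of the excision and bounded-multiplicity claims, which your proposal currently lacks.
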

		We will use this lemma to construct a Cantor set $\mc{C}\subseteq\mc{D}(\eta)$ which has maximum possible Hausdorff dimension. Let $\{\bm{\gamma}_i\}_{i\in\N}=\Q^d$ be any enumeration of the rationals in $\R^d$, and define a sequence of positive integers $\{i_k\}_{k\in\N}$ by successively concatenating 1, followed by 1,2, followed by 1,2,3, and so on, so that
		\[(i_1,i_2,\ldots)=(1,1,2,1,2,3,1,2,3,4,\ldots).\]
		Our Cantor set will be constructed in levels, starting from the 0th level and working upward. Each level will consist of cubes of the same diameter, and each new level will be constructed by subdividing each of these cubes into subcubes. For each $k\ge 0$ we will also define an integer $Q_k\ge 1$ with the property that every point $\balph$ which lies in the $k$th level of the Cantor set, for $k\in\N$, satisfies
		\begin{equation}\label{eqn.Thm3AccumPtCond}
		0<\left\|q\bm{\alpha}-\frac{\bm{\gamma}_{i_k}}{q^{\eta}}\right\|<\frac{1}{q^\eta\log q},
		\end{equation}
		for some $Q_{k-1}< q\le Q_k$. This guarantees that, for all $\balph\in\mc{C}$, we have that $\Q^d\subseteq\mc{A}_\eta(\balph)$. Since the set $\mc{A}_\eta(\balph)$ is closed, this implies that $\mc{C}\subseteq\mc{D}(\eta).$
		
		For the 0th level of our Cantor set we start with the cube $[0,1)^d$, and we set $Q_0=1$. Then for $k\in\N$, we construct the $k$th level of our Cantor set as follows. Let $C_1,\ldots ,C_s$ be the cubes from level $k-1$ and write $\delta_{k-1}$ for their common diameter. Choose $R$ large enough so that 
		\begin{equation}\label{eqn.Thm3CubeShrink}
			R>Q_{k-1}\quad\text{and}\quad \frac{|\bm{\gamma}_{i_k}|}{R^\eta}<\frac{\delta_{k-1}}{3}.
		\end{equation}
		For each $1\le r\le s$ let $C_r'$ be the cube with the same center as $C_r$ and with diameter $\delta_{k-1}/3$, and let $U_r$ be the set of all rational points in $C_r'$ with denominators (least common multiples of the denominators of the reduced fraction components) less than or equal to $R$. Applying Lemma \ref{lem.RatPtCount} to each of the pairs $C_r'$ and $U_r$, we can choose $Q_k$  large enough (independently of $r$) so that each cube $C_r'$ contains at least
		\begin{equation}\label{eqn.Thm3m_k}
			m_k=c_0\delta_{k-1}^dQ_k^{d+1}
		\end{equation}
		rational points, with denominators lying in $(R,Q_k]$, which are separated by at least
		\begin{equation}\label{eqn.Thm3eps_k}
			\epsilon_k=Q_k^{-(d+1)/d}.
		\end{equation}
		Now suppose that
		\[\bm{\beta}_1=\left(\frac{a_{1,1}}{q_1},\ldots,\frac{a_{d,1}}{q_1}\right),\ldots, ~\bm{\beta}_t=\left(\frac{a_{1,t}}{q_t},\ldots,\frac{a_{d,t}}{q_t}\right)\]
		are the rational points constructed in this way, which lie in one of the cubes $C_r'$. Then at level $k$ we divide the cube $C_r$ into the subcubes
		\[B\left(\bm{\beta}_1-\frac{\bm{\gamma}_{i_k}}{q_1^{1+\eta}},\frac{1}{Q_k^{1+\eta}\log Q_k}\right),\ldots , ~B\left(\bm{\beta}_t-\frac{\bm{\gamma}_{i_k}}{q_t^{1+\eta}},\frac{1}{Q_k^{1+\eta}\log Q_k}\right).\]
		We make the analogous subdivision for each of the cubes $C_r,~1\le r\le s.$ By \eqref{eqn.Thm3CubeShrink}, each of the new cubes is contained in a level $k-1$ cube, and any $\balph$ which lies in one of the $k$th level cubes satisfies \eqref{eqn.Thm3AccumPtCond} for some $Q_{k-1}<q<Q_k$. Note also that
		\begin{equation}\label{eqn.Thm3del_k}
			\delta_k\gg\frac{1}{Q_k^{1+\eta}\log Q_k}.
		\end{equation}
		
		Now we apply the mass distribution principle. Let $\mu$ be the probability mass distribution constructed using the above Cantor set, by subdividing the measure assigned to each cube evenly among each of next level subcubes which it contains. If $B$ is any subset of $\R^k$ with $\epsilon_k\le \diam(B)<\epsilon_{k-1}$ then it intersects at most
		\[\ll\min\left(m_k,\left(\frac{\diam(B)}{\epsilon_k}\right)^d\right)\le\frac{m_k^{1-s/d}\diam(B)^s}{\epsilon_k^s}\]
		level $k$ cubes, for any $s\in [0,d]$. This implies that
		\[\frac{\mu(B)}{\diam(B)^s}\ll\frac{1}{m_1\cdots m_{k-1}\left(m_k^{1/d}\epsilon_k\right)^s}.\]
		Using \eqref{eqn.Thm3m_k}-\eqref{eqn.Thm3del_k} we have that
		\begin{align*}
			&\frac{\log (m_1\cdots m_{k-1})}{-\log\left(m_k^{1/d}\epsilon_k\right)}\\
			&\hspace*{10bp}\gg\frac{(d+1)\log Q_{k-1}-(d\eta-1)\log(Q_1\cdots Q_{k-2})-d\log(\log(Q_1)\cdots \log(Q_{k-2}))}{(1+\eta)\log Q_{k-1}+\log\log Q_{k-1}}.
		\end{align*}
		By returning to our Cantor set construction, if necessary, we can ensure that the integers $Q_k$ are chosen so that the right hand side here tends to $(d+1)/(1+\eta)$ as $k$ tends to infinity. Then, as we saw in the previous section, the mass distribution principle guarantees that
		\[\dim_H(\mc{C})\ge\frac{d+1}{1+\eta}.\]
		This therefore completes the proof of Theorem \ref{thm.JarBesDens}.

		\section{Analysis of normalized approximations to algebraic numbers}\label{sec.NormAlg}
		In this section we will use the notation and background material introduced in Section \ref{subsec.AlgNT}. We would like to re-emphasize that many of the building blocks of the results which we are collecting together in this and the following two sections appear in various forms in work of Cassels and Swinnerton-Dyer \cite{CassSwin1955}, Peck \cite{Peck1961}, Adams \cite{Adam1969a,Adam1969b,Adam1971}, Chevallier \cite{Chev2002}, Ito and Yasutomi \cite{ItoYasu2006}, and  Hensley \cite[Chapter 7]{Hens2006}.
		
		Suppose that $1,\alpha_1,\ldots,\alpha_d$ form a $\Q$-basis for an algebraic number field $K\subseteq \R$, and set $\Lambda=\Lambda(1,\alpha_1,\ldots,\alpha_d)$. What we aim to show
		can be loosely summarized in the following two statements:
		\begin{itemize}[itemsep=10bp,parsep=10bp, topsep=10bp]
			\item[(i)] All $(1/d)$-normalized approximations to $\balph$ with sup-norm at most $C>0$ can be associated in a natural way with an algebraic number of the form $su\in\Lambda^*$, where $s$ is taken from a finite set $S_C\subseteq\Lambda^*$ and $u\in\mc{Z}_\Lambda^\times$ is a dominant unit (i.e. one satisfying \eqref{eqn.DomUnitDef}, for some $t>1)$.
			\item[(ii)] If $s\in\Lambda^*$ then numbers of the form $su$, where $u$ runs through all dominant units in $\mc{Z}_\Lambda^\times$, give rise to normalized approximations which lie near a dilation of a fixed surface in $\R^d$, homothetically scaled about the origin by a factor of $|\NN(s)|^{1/d}$.
		\end{itemize}
		To make statement (i) precise, we have the following lemma.
		\begin{lemma}\label{lem.NormAppChar1}
			Suppose that $\balph\in\R^d$ and that $1,\alpha_1,\ldots,\alpha_d$ form a $\Q$-basis for an algebraic number field, and set $\Lambda=\Lambda(\alpha_0,\alpha_1,\ldots,\alpha_d)$, with $\alpha_0=1$. For every $C>0$, there is a finite set $S_C\subseteq\Lambda^*$ with the property that, if $q\in\N$ and $\bm{p}\in\Z^d$ satisfy
			\begin{equation}\label{eqn.NormAppCBnd}
				|q\balph-\bm{p}|\le\frac{C}{q^{1/d}}
			\end{equation}
			then 
			\[q\alpha_0^*+p_1\alpha_1^*+\cdots +p_d\alpha_d^*=su,\]
			for some $s\in S_C$ and dominant unit $u\in\mc{Z}_\Lambda^\times$.
		\end{lemma}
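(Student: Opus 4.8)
The plan is to associate to each solution of \eqref{eqn.NormAppCBnd} the dual--lattice element
\[
\xi:=q\alpha_0^*+p_1\alpha_1^*+\cdots+p_d\alpha_d^*\in\Lambda^*,
\]
to show that its conjugates are \emph{dominant} (one of them large, comparable to $q$, the others small with well--controlled product), and then to divide $\xi$ by a suitable dominant unit so that what remains is forced into a fixed bounded region of $\Lambda^*$. Since $\Lambda^*$ is a lattice, that region contains only finitely many of its points, and this finite set will be $S_C$.

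Write $p_0=q$ and $w_i=q\alpha_i-p_i$ for $0\le i\le d$; since $\alpha_0=1$ we have $w_0=0$, and \eqref{eqn.NormAppCBnd} gives $|w_i|\le Cq^{-1/d}$ for $1\le i\le d$. First I would record the orthogonality relations forced by $\langle\alpha_i,\alpha_j^*\rangle=\delta_{ij}$. Written out, this says $\sum_{k=0}^d\sigma_k(\alpha_i)\sigma_k(\alpha_j^*)=\delta_{ij}$, i.e. $M^tN=I$ for the $(d+1)\times(d+1)$ matrices $M=(\sigma_k(\alpha_i))$ and $N=(\sigma_k(\alpha_j^*))$; the identity $NM^t=I$ then gives $\sum_{i=0}^d\sigma_k(\alpha_i^*)\,\sigma_0(\alpha_i)=\delta_{k0}$. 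Substituting $p_i=q\sigma_0(\alpha_i)-w_i$ into $\sigma_k(\xi)=\sum_{i=0}^d p_i\sigma_k(\alpha_i^*)$ and using this last relation makes the main term collapse, leaving
\[
\sigma_0(\xi)=q-\sum_{i=1}^d w_i\alpha_i^*,\qquad
\sigma_k(\xi)=-\sum_{i=1}^d w_i\sigma_k(\alpha_i^*)\quad(1\le k\le d).
\]
Since the $\sigma_k(\alpha_i^*)$ are fixed, it follows that $|\sigma_0(\xi)-q|\le C_1q^{-1/d}$ and $|\sigma_k(\xi)|\le C_2q^{-1/d}$ for $1\le k\le d$, where $C_1,C_2$ depend only on $C$; in particular $|\sigma_0(\xi)|\le q+C_1$ and $|\sigma_k(\xi)|\le C_2$ for every $q\ge 1$.

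Next I would apply the construction preceding \eqref{eqn.DomUnitDef}, with $t=q+1$ (which is $>1$ since $q\in\N$), to obtain a dominant unit $u\in\mc{Z}_\Lambda^\times$ with $q+1\le|u|\le\kappa_\Lambda(q+1)$ and $(q+1)^{-1/d}\le|\sigma_k(u)|\le\kappa_\Lambda(q+1)^{-1/d}$ for $1\le k\le d$. Since $u^{-1}\in\mc{Z}_\Lambda^\times\subseteq\mc{Z}_\Lambda$ and $\mc{Z}_\Lambda$ stabilizes $\Lambda^*$, the element $s:=u^{-1}\xi$ again lies in $\Lambda^*$, and $\xi=su$. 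Combining the displayed bounds,
\[
|\sigma_0(s)|=\frac{|\sigma_0(\xi)|}{|u|}\le\frac{q+C_1}{q+1},\qquad
|\sigma_k(s)|=\frac{|\sigma_k(\xi)|}{|\sigma_k(u)|}\le C_2\bigl((q+1)/q\bigr)^{1/d}\quad(1\le k\le d),
\]
so every conjugate of $s$ is bounded by a constant $M_C$ depending only on $C$. As complex embeddings occur in conjugate pairs of equal modulus, this means $\sigma(s)$ lies in a fixed bounded subset of $\R^{d+1}$; since $\Lambda^*$ is a lattice, the set $S_C:=\{\beta\in\Lambda^*:|\sigma_k(\beta)|\le M_C\text{ for }0\le k\le d\}$ is finite, and by construction $s\in S_C$ and $u$ is a dominant unit, which is what was claimed.

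The real content is the observation that the dual--lattice element attached to a bounded normalized approximation is automatically dominant, so that removing a single dominant unit drops it into a fixed box; the identities between $\{\alpha_i\}$ and $\{\alpha_i^*\}$ that pin down the sizes of the $\sigma_k(\xi)$ are then just linear algebra. The two points that need a little care are that $u$ must satisfy \eqref{eqn.DomUnitDef} for some $t>1$ --- which is why I take $t=q+1$ rather than $t=q$, so that the argument is uniform down to $q=1$ --- and that $u^{-1}\xi$ stays in $\Lambda^*$, for which one uses the $\mc{Z}_\Lambda$--invariance of $\Lambda^*$ established in Section~\ref{subsec.AlgNT}.
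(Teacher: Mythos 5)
Your proposal is correct and follows essentially the same route as the paper: express the conjugates of $\xi=q\alpha_0^*+\sum_i p_i\alpha_i^*$ via the duality between $\{\alpha_i\}$ and $\{\alpha_i^*\}$ (the paper does this through the matrix $B$ of \eqref{eqn.BMat1}--\eqref{eqn.BMat2}, you through the orthogonality relations directly, which is the same linear algebra), then divide by a dominant unit scaled to $q$ and invoke discreteness of $\Lambda^*$ to extract the finite set $S_C$. Your two refinements --- taking $t=q+1$ so the unit is dominant even when $q=1$, and explicitly citing the $\mc{Z}_\Lambda$-stability of $\Lambda^*$ to keep $s=u^{-1}\xi$ in $\Lambda^*$ --- are minor tightenings of details the paper leaves implicit, not a different argument.
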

		\begin{proof}
			Let $B$ be the matrix defined by \eqref{eqn.BMat2}, so that \eqref{eqn.BMat1} holds. Write
			\[\alpha^*=q\alpha_0^*+p_1\alpha_1^*+\cdots +p_d\alpha_d^*.\]
			Then we have that
			\begin{align*}
				\sigma(\alpha^*)&=B^{-t}(q,p_1,\ldots ,p_d)^t\\
				&=B^{-t}\left((0,p_1-q\alpha_1,\ldots ,p_d-q\alpha_d)^t+q(1,\alpha_1,\ldots ,\alpha_d)^t\right)\\
				&=\sum_{i=1}^d(p_i-q\alpha_i)\sigma(\alpha_i^*)+q\left((1,\alpha_1,\ldots  ,\alpha_d)B^{-1}\right)^t.
			\end{align*}
			Since
			\[(1,\alpha_1,\ldots  ,\alpha_d)=(1,0,\ldots ,0)(\sigma(\alpha_0)\sigma(\alpha_1)\cdots\sigma(\alpha_d)),\]
			it follows from the definition of $B$ that
			\[(1,\alpha_1,\ldots  ,\alpha_d)B^{-1}=(1,0,\ldots,0).\]
			Therefore we have that
			\[\sigma(\alpha^*)=\sum_{i=1}^d(p_i-q\alpha_i)\sigma(\alpha_i^*)+(q,0,\ldots ,0)^t.\]
			By the assumption that \eqref{eqn.NormAppCBnd} holds, this implies that
			\[|\alpha^*-q|\le \frac{C(\balph)}{q^{1/d}},\]
			and that
			\[|\sigma_i(\alpha^*)|\le\frac{C(\balph)}{q^{1/d}}\quad\text{for}\quad 1\le i\le d,\]
			where
			\[C(\balph)=C\sum_{i=1}^d|\sigma(\alpha_i^*)|.\]
			
			By the discussion preceding \eqref{eqn.DomUnitDef}, there is a dominant unit $u\in\mc{Z}_\Lambda^\times$ satisfying
			\[	\kappa_\Lambda^{-1}q\le |u|\le \kappa_\Lambda q,\quad\text{and}\quad \kappa_\Lambda^{-1}q^{-1/d}\le |\sigma_i(u)|\le \kappa_\Lambda q^{-1/d}\quad\text{for}\quad1\le i\le d.\]
			We then have that
			\[|\sigma(\alpha^*u^{-1})|\le \kappa_\Lambda\left(1+C(\balph)(1+q^{-1-1/d})\right).\]
			Now define
			\[S_C=\left\{s\in\Lambda^*:|\sigma(s)|\le \kappa_\Lambda\left(1+C(\balph)(1+q^{-1-1/d})\right)\right\}.\]
			The fact that $\Lambda^*$ is a lattice guarantees that $S_C$ is a finite set. Finally, since $\sigma$ is injective and $\sigma(\alpha^*u^{-1})\in\sigma(S_C)$, we have that $\alpha^*=su$, for some $s\in S_C$.
		\end{proof}
		The previous lemma demonstrates that integers $q$ and $\bm{p}$ in good $(1/d)$-normalized approximations to $\balph$ can be recovered by the formulas
		\[q=\Tr(su)\quad\text{and}\quad p_{i}=\Tr(su\alpha_i)\quad\text{for}\quad1\le i\le d,\]
		for certain elements $s\in\Lambda^*$ and units $u\in\mc{Z}_\Lambda^\times$. In sympathy with this fact, for any fixed $s\in\Lambda^*$ we define $q_u=q_u(s)\in\Z$ and $\bm{p}_u=\bm{p}_u(s)\in\Z^d$ by
		\[q_u=\Tr(su)\quad\text{and}\quad p_{u,i}=\Tr(su\alpha_i)\quad\text{for}\quad1\le i\le d.\]
		Now we consider statement (ii) from the beginning of this section. To make this statement precise, we have the following lemma.
		\begin{lemma}\label{lem.NormAppChar2}
			Suppose that $\balph$ and $\Lambda$ are as in the previous lemma, and that $s\in\Lambda^*$. Then, with $q_u$ and $\bm{p}_u$ defined as above, for each $u\in\mc{Z}_\Lambda^\times$, there exist $\gamma_u=\gamma_u(s)\in\R$ and $\bm{\beta}_u=\bm{\beta}_u(s)\in\R^d$ satisfying
			\begin{equation}\label{eqn.MatCurveEqn1}
				|q_u|^{1/d}(q_u\balph-\bm{p}_u)=\gamma_uM(\balph)\bm{\beta}_u,
			\end{equation}
			where $M(\balph)$ is an invertible, real $d\times d$ matrix which only depends on $\balph$, where $\bm{\beta}_u$ lies on the surface in $\R^d$ with equation
			\begin{equation}\label{eqn.SurfEqn}
				\left|x_1\cdots x_r\right|\prod_{i=1}^s\left(x_{r+2i-1}^2+x_{r+2i}^2\right)=1,
			\end{equation}
			and where $\gamma_u$ tends to $|\NN(s)|^{1/d}$, as $u$ tends to infinity along any sequence of dominant units.
		\end{lemma}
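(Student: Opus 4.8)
The plan is to reverse the linear-algebra computation from the proof of Lemma~\ref{lem.NormAppChar1}, now applied to the algebraic number $\alpha^* = su$. Since $u \in \mc{Z}_\Lambda^\times$ and $s \in \Lambda^*$ we have $su \in \Lambda^*$, so expanding in the dual basis gives $su = q_u\alpha_0^* + p_{u,1}\alpha_1^* + \cdots + p_{u,d}\alpha_d^*$, because the $\alpha_j^*$-coefficient of any $\xi \in \Lambda^*$ equals $\langle \xi,\alpha_j\rangle$, and here $\langle su,\alpha_0\rangle = \Tr(su) = q_u$ while $\langle su,\alpha_i\rangle = \Tr(su\alpha_i) = p_{u,i}$. (The case $s = 0$ is trivial, so I assume $s \ne 0$.) Exactly as in the proof of Lemma~\ref{lem.NormAppChar1}, this yields
\[
\sigma(su) = \sum_{i=1}^d (p_{u,i} - q_u\alpha_i)\,\sigma(\alpha_i^*) + (q_u, 0, \ldots, 0)^t .
\]
Discarding the $0$th coordinate of each term (where the last term contributes nothing) gives $\bm{w}_u^t = -N\,(q_u\balph - \bm{p}_u)^t$, where $\bm{w}_u \in \R^d$ collects coordinates $1,\ldots,d$ of $\sigma(su)$ and $N$ is the real $d \times d$ matrix obtained from $(\sigma(\alpha_1^*)\,\cdots\,\sigma(\alpha_d^*))$ by deleting its $0$th row.

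Next I would show $N$ is invertible and set $M(\balph) := N^{-t}$, so that $q_u\balph - \bm{p}_u = -\bm{w}_u M(\balph)$; since the dual basis $\alpha_0^*,\ldots,\alpha_d^*$ is determined by $1,\alpha_1,\ldots,\alpha_d$, this $M(\balph)$ depends only on $\balph$. For the invertibility: the identity $(1,\alpha_1,\ldots,\alpha_d)B^{-1} = (1,0,\ldots,0)$ from the proof of Lemma~\ref{lem.NormAppChar1}, transposed and combined with $B^{-t} = (\sigma(\alpha_0^*)\,\cdots\,\sigma(\alpha_d^*))$, gives $\sigma(\alpha_0^*) + \sum_{i=1}^d \alpha_i\,\sigma(\alpha_i^*) = (1,0,\ldots,0)^t$. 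If $N$ were singular there would be $\bm{c} \in \R^d \setminus \{\bm{0}\}$ with $\sum_{i=1}^d c_i\,\sigma(\alpha_i^*) = \mu\,(1,0,\ldots,0)^t$ for some $\mu \in \R$; substituting the displayed expression for $(1,0,\ldots,0)^t$ and using that $\sigma(\alpha_0^*),\ldots,\sigma(\alpha_d^*)$ are linearly independent (being the columns of the invertible matrix $B^{-t}$) forces $\mu = 0$ and then $\bm{c} = \bm{0}$, a contradiction.

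The geometric point is the choice of $\bm{\beta}_u$. Because $\sigma$ is multiplicative and $u$ is a unit, one computes directly
\[
|w_{u,1}\cdots w_{u,r}|\prod_{i=1}^s\!\left(w_{u,r+2i-1}^2 + w_{u,r+2i}^2\right) = \prod_{j=1}^r |\sigma_j(su)|\prod_{i=1}^s |\sigma_{r+i}(su)|^2 = \frac{|\NN(su)|}{|\sigma_0(su)|} = \frac{|\NN(s)|}{|su|},
\]
using $|\NN(u)| = 1$. Since the left-hand side of \eqref{eqn.SurfEqn} is homogeneous of degree $r + 2s = d$, setting
\[
\bm{\beta}_u := -\left(\frac{|su|}{|\NN(s)|}\right)^{1/d}\bm{w}_u
\]
places $\bm{\beta}_u$ on the surface \eqref{eqn.SurfEqn} (the sign being chosen so that the scalar below comes out positive; the surface is invariant under $\bm{x} \mapsto -\bm{x}$). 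Substituting $\bm{w}_u = -(|\NN(s)|/|su|)^{1/d}\bm{\beta}_u$ into $q_u\balph - \bm{p}_u = -\bm{w}_u M(\balph)$ and multiplying through by $|q_u|^{1/d}$ produces \eqref{eqn.MatCurveEqn1} with $\gamma_u = \bigl(|q_u|\,|\NN(s)|/|su|\bigr)^{1/d}$.

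Finally, for the limiting assertion I would use the dominant-unit inequalities \eqref{eqn.DomUnitDef}: writing $q_u = \Tr(su) = su + \sum_{j=1}^d \sigma_j(s)\sigma_j(u)$ (the sum running over the $d$ non-identity embeddings), each term of the sum is $O(t^{-1/d})$ by \eqref{eqn.DomUnitDef}, whereas $|su| = |s|\,|u| \ge |s|\,t$; hence $q_u/(su) \to 1$, and therefore $\gamma_u \to |\NN(s)|^{1/d}$, as $u$ runs to infinity along any sequence of dominant units. The only places needing care are the invertibility of $N$ and the bookkeeping between Minkowski-embedding coordinates and the coordinates in \eqref{eqn.SurfEqn}; the substance — that the rescaled vector lands exactly on the fixed surface — is forced by the norm identity together with the degree-$d$ homogeneity, so there is no real obstacle beyond organizing the computation.
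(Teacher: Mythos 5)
Your proof is correct, and its core mechanism is the same as the paper's: the same normalization $\gamma_u^d=|q_u|\,|\NN(s)|/|su|$, the same use of the norm identity plus degree-$d$ homogeneity to place the rescaled Minkowski vector of $su$ on the surface \eqref{eqn.SurfEqn}, and the same dominant-unit estimate \eqref{eqn.DomUnitDef} for the limit $\gamma_u\rar|\NN(s)|^{1/d}$. Where you genuinely diverge is in how the matrix identity and its invertibility are obtained. The paper expands $\Tr(su)\alpha_i-\Tr(su\alpha_i)=\sum_{j\ge 1}\sigma_j(su)(\alpha_i-\sigma_j(\alpha_i))$ directly, which produces the explicit conjugate-difference matrix \eqref{eqn.DefMalpha} together with the explicit formulas \eqref{eqn.Defgammau} and \eqref{eqn.Defbetau}; invertibility is then argued indirectly, by noting that varying $s$ (with $u=1$) realizes every pair $(q,\bm{p})$, so the rank of $M(\balph)$ must be maximal. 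You instead expand $su$ in the dual basis and reuse the identity from the proof of Lemma \ref{lem.NormAppChar1} to get $\bm{w}_u^t=-N(q_u\balph-\bm{p}_u)^t$, take $M(\balph)=N^{-t}$, and prove invertibility of $N$ directly from the linear independence of $\sigma(\alpha_0^*),\ldots,\sigma(\alpha_d^*)$ (columns of $B^{-t}$, via \eqref{eqn.BMat1}) together with the relation $\sigma(\alpha_0^*)+\sum_i\alpha_i\sigma(\alpha_i^*)=(1,0,\ldots,0)^t$. That invertibility argument is cleaner and more self-contained than the paper's spanning argument, and since both identities hold with the same row vector $\bm{w}_u$, your $M(\balph)$ necessarily agrees with \eqref{eqn.DefMalpha} up to sign, so nothing is lost for the lemma as stated; the only trade-off is that the paper's route yields the explicit entries of $M(\balph)$, $\gamma_u$, and $\bm{\beta}_u$ that are quoted verbatim in the proofs of Theorems \ref{thm.QuadratApps} and \ref{thm.CubicApps}, so if your version were substituted one would still want to record those explicit formulas (or consistently track your sign convention) for later use.
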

		\begin{proof} Assume that $s\not=0$, since otherwise the result is trivial. First of all, from the definitions of $q_u$ and $\bm{p}_u$, we have that
			\begin{align}
				q_u\balph-\bm{p}_u&=\begin{pmatrix}
					\alpha_1 &\alpha_1&\cdots &\alpha_1\\
					\vdots&\vdots&\ddots&\vdots\\
					\alpha_d&\alpha_d&\cdots &\alpha_d
				\end{pmatrix}\sigma(su)-\begin{pmatrix}
				\sigma(\alpha_1)^t\\
				\vdots\\
				\sigma(\alpha_d)^t
			\end{pmatrix}\sigma(su)\nonumber\\
				&=\begin{pmatrix}
					\alpha_1-\sigma_1(\alpha_1) &\cdots &\alpha_1-\sigma_d(\alpha_1)\\
					\vdots&\ddots &\vdots\\
					\alpha_d-\sigma_1(\alpha_d)&\cdots &\alpha_d-\sigma_d(\alpha_d)
				\end{pmatrix}\begin{pmatrix}
				\sigma_1(su)\\
				\vdots\\
				\sigma_d(su)
			\end{pmatrix}.\label{eqn.MatCurveEqn2}
			\end{align}
			Now for $1\le i\le 2s$ and $1\le j\le d$ write 
			\begin{equation}\label{eqn.Defziwij}
				z_i=\sigma_{r+i}(su)\quad\text{and}\quad w_{i,j}=\alpha_j-\sigma_{r+i}(\alpha_j).
			\end{equation}
			Then we have that
			\begin{align*}
				&\begin{pmatrix}
					\alpha_1-\sigma_{r+1}(\alpha_1)&\cdots &\alpha_1-\sigma_{d}(\alpha_1)\\
					\vdots&\ddots&\vdots\\
					\alpha_d-\sigma_{r+1}(\alpha_d)&\cdots &\alpha_d-\sigma_d(\alpha_d)
				\end{pmatrix}\begin{pmatrix}
				\sigma_{r+1}(su)\\
				\vdots\\
				\sigma_d(su)
			\end{pmatrix}\\
				&\hspace*{20bp} =\begin{pmatrix}
					w_{1,1}&\cdots &w_{s,1}&\overline{w_{1,1}}&\cdots&\overline{w_{s,1}}\\
					\vdots&\ddots & \vdots&\vdots&\ddots & \vdots\\
					w_{1,d}&\cdots&w_{s,d}&\overline{w_{1,d}}&\cdots&\overline{w_{s,d}}
				\end{pmatrix}(z_1,\cdots,z_s,\overline{z_1},\cdots,\overline{z_s})^t\\
				&\hspace*{20bp} =2\sum_{i=1}^s\left(\mathrm{Re}(z_iw_{i,1}),\ldots,\mathrm{Re}(z_iw_{i,d})\right)^t\\
				&\hspace*{20bp} =\begin{pmatrix}
					2\mathrm{Re}(w_{1,1})&-2\mathrm{Im}(w_{1,1})&\cdots&2\mathrm{Re}(w_{s,1})&-2\mathrm{Im}(w_{s,1})\\
					\vdots&\vdots&\ddots&\vdots&\vdots\\
					2\mathrm{Re}(w_{1,d})&-2\mathrm{Im}(w_{1,d})&\cdots&2\mathrm{Re}(w_{s,d})&-2\mathrm{Im}(w_{s,d})	
				\end{pmatrix}\begin{pmatrix}
				\mathrm{Re}(z_1)\\ \mathrm{Im}(z_1)\\ \vdots \\\mathrm{Re}(z_s) \\\mathrm{Im}(z_s)
			\end{pmatrix}.
			\end{align*}
			Using this fact in \eqref{eqn.MatCurveEqn2}, and taking
			\begin{equation}\label{eqn.Defgammau}
				\gamma_u=|q_u|^{1/d}\left|\sigma_1(su)\cdots\sigma_d(su)\right|^{1/d},
			\end{equation}
			we have that \eqref{eqn.MatCurveEqn1} holds with 
			\begin{equation}\label{eqn.Defbetau}
				\bm{\beta}_u=\frac{|q_u|^{1/d}}{\gamma_u}\left(\sigma_1(su),\ldots,\sigma_{r}(su),\mathrm{Re}(z_1),\mathrm{Im}(z_1),\ldots,\mathrm{Re}(z_s),\mathrm{Im}(z_s)\right)^t,
			\end{equation}
			and with a real $d\times d$ matrix  $M(\balph)$ which only depends on $\balph$. The vector $\bm{\beta}_u$ is easily seen to satisfy the equation \eqref{eqn.SurfEqn}.
			
			Next, we have that
			\begin{align}\label{eqn.GamEst}
				\gamma_u^d&=\frac{|q_u||\NN(su)|}{|\sigma_0(su)|}=\frac{|\NN(s)|}{|su|}|\Tr(su)|=|\NN(s)|\left|\frac{su}{|su|}+\sum_{i=1}^{d}\frac{\sigma_i(su)}{|su|}\right|.
			\end{align}
			As $u$ tends to infinity along any collection of units satisfying \eqref{eqn.DomUnitDef}, the right hand side of this expression approaches $|\NN(s)|$, so $\gamma_u$ tends to $|\NN(s)|^{1/d}$, as required.

			Finally, to see that $M(\balph)$ is invertible note that, by allowing $s$ to vary in the above argument we can achieve all possible choices of $q\in\Z$ and $\bm{p}\in\Z^d$ as values of $q_u$ and $\bm{p}_u$. For example, we can always take
			\[s=q\alpha_0^*+p_1\alpha_1^*+\cdots +p_d\alpha_d^*\]
			and $u=1$, and we have that $q_u=q$ and $\bm{p}_u=\bm{p}$. However, since $M(\balph)$ depends only on $\balph$, this allows us to deduce from \eqref{eqn.MatCurveEqn1} that its rank must be maximal.
		\end{proof}
		In our proofs of Theorems \ref{thm.QuadratApps} and \ref{thm.CubicApps} in the following two sections, we will make use of the specific values of $\gamma_u$ and $\bm{\beta}_u$ from \eqref{eqn.Defgammau} and \eqref{eqn.Defbetau}, as well as that of
		\begin{equation}\label{eqn.DefMalpha}
			M(\balph)=\begin{pmatrix}
				\alpha_1-\sigma_1(\alpha_1)&\cdots& \alpha_1-\sigma_r(\alpha_1)&2\mathrm{Re}(w_{1,1})&\cdots&-2\mathrm{Im}(w_{s,1})\\
				\vdots&\ddots&\vdots&\vdots&\ddots&\vdots\\
				\alpha_d-\sigma_1(\alpha_d)&\cdots& \alpha_d-\sigma_r(\alpha_d)&2\mathrm{Re}(w_{1,d})&\cdots&-2\mathrm{Im}(w_{s,d})
			\end{pmatrix},
		\end{equation}
		where the $z_i$ and $w_{i,j}$ are given by \eqref{eqn.Defziwij}.
		
		\section{Proof of Theorem \ref{thm.QuadratApps}}\label{sec.QuadThm}
		Let $\alpha=a+b\sqrt{D}\in\R$, with $a,b\in\Q$, $b\not=0$, and $D\ge 2$ a squarefree integer, let $K=\Q(\alpha)$, and let $\Lambda=\Lambda(\alpha_0,\alpha_1)$, with $\alpha_0=1$ and $\alpha_1=\alpha$. First we show that
		\begin{equation}\label{eqn.Thm4PfIncl}
			\mc{A}_1(\alpha)\subseteq\left\{\pm 2b\sqrt{D}\phantom{\cdot}\NN(s) : s\in\Lambda^*\setminus\{0\}\right\}.
		\end{equation}
		Fix any $C>0$ and let $S_C$ be the set from Lemma \ref{lem.NormAppChar1}. The dominant units in $\mc{Z}_\Lambda^\times$ are a uniformly discrete set (the set of all units is a group of rank 1). Therefore we can enumerate the collection of all positive dominant units in $\mc{Z}_\Lambda^\times$ as $\{u_k\}_{k\in\N}$, with $u_k\rar\infty$ as $k\rar\infty$. The negative dominant units are, of course, just the negatives of these. By Lemma \ref{lem.NormAppChar1}, if $q\in\N$ and $p\in\Z$ satisfy
		\[q\left|q\alpha-p\right|\le C\]
		then
		\[q\alpha_0^*+p\alpha_1^*=\pm su_k,\]
		for some $s\in S_C$ and $k\in\N$ and, furthermore, we have that
		\[q=q_{u_k}=\Tr(\pm su_k)\quad\text{and}\quad p=p_{u_k}=\Tr(\pm su_k\alpha).\]
		By Lemma \ref{lem.NormAppChar2}, together with the definition of $M(\alpha)$ from \eqref{eqn.DefMalpha}, we have for each $s\in S_C$ and $k\in\N$ that
		\begin{equation}\label{eqn.Thm4PfNormAppForm}
		q_{u_k}(q_{u_k}\alpha-p_{u_k})=\pm2b\sqrt{D}\gamma_{u_k},
		\end{equation}
		and that $\gamma_{u_k}\rar |\NN(s)|$ as $k\rar\infty$. This establishes the inclusion in \eqref{eqn.Thm4PfIncl}. The reader may also wish to note that $0\notin\mc{A}_1(\alpha)$, by virtue of the fact that $\alpha$ is badly approximable.
		
		To prove the reverse inclusion, it is enough to note that, for any $s\in\Lambda^*\setminus\{0\}$ and for any $k\in\N$, the number $\gamma_{u_k}$ above is never equal to $|\NN(s)|$. This follows from formula \eqref{eqn.GamEst}, and it guarantees that at least one of the points $\pm2b\sqrt{D}|\NN(s)|$ is an accumulation point of the sequence in \eqref{eqn.Thm4PfNormAppForm}. Since, by definition, the set $\mc{A}_1(\alpha)$ is symmetric about 0, this completes the proof.

		\section{Proof of Theorem \ref{thm.CubicApps}}\label{sec.CubicThm}
		Suppose that $\bm{\alpha}\in\R^2$ and that $\{1,\alpha_1,\alpha_2\}$ is a basis for a cubic field $K/\Q$, and let $\Lambda=\Lambda(\alpha_0,\alpha_1,\alpha_2)$, with $\alpha_0=1$. We divide the proof of Theorem \ref{thm.CubicApps} into two cases, depending on whether or not $K$ has a non-trivial complex embedding.
		
		\noindent{\textbf{Complex case:}} Suppose that $K$ has a non-trivial embedding into $\C$. Using Lemmas \ref{lem.NormAppChar1} and \ref{lem.NormAppChar2}, and essentially the same argument given in the previous section, we find that $\mc{A}_{1/2}(\balph)$ is a subset of
		\begin{equation}\label{eqn.Thm5AccPts1}
			\bigcup_{s\in \Lambda^*\setminus\{0\}}\left|\NN (s)\right|^{1/2}\mc{C}_K,
		\end{equation}
		where $\mc{C}_K$ is the ellipse defined by
		\[\mc{C}_K=\left\{M(\balph)\bm{x}:\bm{x}\in\R^2, x_1^2+x_2^2=1\right\}.\]
		
		To prove the reverse inclusion, let $u>1$ be a fundamental unit for $\mc{Z}_\Lambda^\times$. Then
		\[\sigma_1(u)=u^{-1/2}e(\theta),\]
		for some $\theta\in\R$, where $e(z)=e^{2\pi iz}$. Observe that $\theta$ must be irrational, since otherwise we would have that $\sigma_1(u)^k=\sigma_2(u)^k\in\R$ for some $k\in\N$. This in turn would imply that $u^k\in\Q$, which would mean that $u^k=\pm 1$, a contradiction.
		
		Now suppose that $s\in\Lambda^*\setminus\{0\}$ and write
		\[\sigma_1(s)=\rho_s e(\psi_s),\]
		with $\rho_s,\psi_s\in\R$. For each $k\in\N$ and $i=1,2$ let
		\[q_k=\Tr(su^k)\quad\text{and}\quad p_{k,i}=\Tr(su^k\alpha_i).\]
		By Lemma \ref{lem.NormAppChar2}, we have that
		\[|q_k|^{1/2}(q_k\balph-\bm{p}_k)=\gamma_kM(\balph)\bm{\beta}_k,\]
		where $\gamma_k\rar |\NN(s)|^{1/2}$ as $k\rar\infty$ and, by equations \eqref{eqn.Defgammau} and \eqref{eqn.Defbetau},
		\[\bm{\beta}_k=\frac{1}{|\sigma_1(su^k)|}\left(\mathrm{Re}(su^k),\mathrm{Im}(su^k)\right)^t=\left(\cos (\psi_s+k\theta),\sin (\psi_s+k\theta)\right)^t.\]
		Since $\theta$ is irrational, the sequence $\{\bm{\beta}_k\}_{k\in\N}$ is dense on the unit circle in $\R^2$. Also, as before, by equation \eqref{eqn.GamEst} the numbers $\gamma_k$ are never actually equal to $|\NN(s)|^{1/2}$. Therefore every point in \eqref{eqn.Thm5AccPts1} is an accumulation point of $\mc{A}_{1/2}(\balph).$

		\noindent{\textbf{Totally real case:}} If $K$ does not have a non-trivial embedding into $\C$ then, by the same arguments as before, we find that $\mc{A}_{1/2}(\balph)$ is a subset of
		\begin{equation*}
		\bigcup_{s\in \Lambda^*\setminus\{0\}}\left|\NN (s)\right|^{1/2}\left(\mc{C}^+_K\cup\mc{C}^-_K\right),
		\end{equation*}	
		where $\mc{C}_K^+$ and $\mc{C}_K^-$ are the conjugate hyperbolas defined by
		\[\mc{C}^\pm_K=\left\{M(\balph)\bm{x}:\bm{x}\in\R^2, x_1x_2=\pm 1\right\}.\]
		
		Let $u_1$ and $u_2$ be a pair of fundamental units for $\mc{Z}_\Lambda^\times$ and, for each $\bm{a}\in\Z^2$, write
		\[u_{\bm{a}}=u_1^{2a_1}u_2^{2a_2}.\]
		First we establish the following result, which is a key ingredient in the proof.
		\begin{lemma}\label{lem.TotRealUnitDist}
		For any constants $0<c_1<c_2$ the set
		\begin{equation}\label{eqn.UnitHypSection}
			\mc{U}(c_1,c_2)=\left\{u_{\bm{a}}>1:\bm{a}\in\Z^2, c_1<\frac{\sigma_1(u_{\bm{a}})}{\sigma_2(u_{\bm{a}})}<c_2\right\}
		\end{equation}
		is infinite and uniformly discrete in $(0,\infty)$.
		\end{lemma}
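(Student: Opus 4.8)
The plan is to transport everything to the logarithmic embedding, where the claim becomes a lattice‑geometric statement about a rank‑$2$ lattice meeting a half‑strip, powered by one arithmetic input of Kronecker type. Since $K$ is totally real cubic we have $r=2$, $s=0$, and $\varphi(\mc{Z}_\Lambda^\times)$ is a full‑rank lattice in the plane $\mathcal{H}=\{x_0+x_1+x_2=0\}\subseteq\R^3$. Because the exponents in $u_{\bm a}=u_1^{2a_1}u_2^{2a_2}$ are even, the conjugates $\sigma_0(u_{\bm a})=u_{\bm a}$, $\sigma_1(u_{\bm a})$, $\sigma_2(u_{\bm a})$ are positive, so $\varphi(u_{\bm a})=(\log u_{\bm a},\log\sigma_1(u_{\bm a}),\log\sigma_2(u_{\bm a}))^t$ and the two conditions defining $\mc{U}(c_1,c_2)$ become the linear conditions
\[
x_0\big(\varphi(u_{\bm a})\big)>0 \qquad\text{and}\qquad (x_1-x_2)\big(\varphi(u_{\bm a})\big)\in(\log c_1,\log c_2).
\]
The functionals $f=x_0$ and $g=x_1-x_2$ are linearly independent on $\mathcal{H}$, so $\Phi=(f,g)\colon\mathcal{H}\to\R^2$ is a linear isomorphism; let $L=\Phi(\varphi(\{u_1^{2a_1}u_2^{2a_2}:\bm a\in\Z^2\}))$, a full‑rank lattice in $\R^2$ with coordinates $(y_1,y_2)$. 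The map $\bm a\mapsto p:=\Phi(\varphi(u_{\bm a}))$ is injective (as $\varphi(u_1),\varphi(u_2)$ form a basis of $\varphi(\mc{Z}_\Lambda^\times)$) and carries $\mc{U}(c_1,c_2)$ bijectively onto $L\cap R$, where $R=\{(y_1,y_2):y_1>0,\ \log c_1<y_2<\log c_2\}$, with $u_{\bm a}=e^{y_1(p)}$.

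The arithmetic input I would isolate is that no nonzero $u\in\mc{Z}_\Lambda^\times$ satisfies $|\sigma_1(u)|=|\sigma_2(u)|$: total realness forces $\sigma_1(u)=\pm\sigma_2(u)$, hence $\sigma_1(u^2)=\sigma_2(u^2)$, so $u^2$ has a repeated conjugate and lies in the unique proper subfield $\Q$ of $K$, whence $u^2=1$. Via $\varphi$ this says $\varphi(\mc{Z}_\Lambda^\times)$ contains no nonzero vector in the direction $(-2,1,1)$, and since $\Phi(-2,1,1)=(-2,0)$ it says $L\cap\{y_2=0\}=\{0\}$; in particular $\pi_2\colon(y_1,y_2)\mapsto y_2$ is injective on $L$, so $\pi_2(L)\cong\Z^2$ is a non‑cyclic, hence dense, subgroup of $\R$.

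The two assertions then follow by short lattice arguments. For uniform discreteness, distinct $u_{\bm a},u_{\bm a'}\in\mc{U}(c_1,c_2)$ produce $p-p'\in L\setminus\{0\}$ with $|y_2(p-p')|<\log(c_2/c_1)$ and $y_1(p-p')\neq0$ (else $u_{\bm a}=e^{y_1(p)}=e^{y_1(p')}=u_{\bm a'}$); discreteness of $L$ then bounds $|y_1(p-p')|$ below by a constant $\delta_0>0$ depending only on $L$ and $c_2/c_1$, and since $y_1(p),y_1(p')>0$ this yields $|u_{\bm a}-u_{\bm a'}|\geq e^{\delta_0}-1>0$. For infinitude, density of $\pi_2(L)$ supplies $v_k\in L\setminus\{0\}$ with $0<|y_2(v_k)|\to0$; for large $k$ automatically $y_1(v_k)\neq0$ (nonzero points of $L$ on the $y_2$‑axis have $|y_2|$ bounded below), so after replacing $v_k$ by $-v_k$ we may take $y_1(v_k)>0$, and a bounded such sequence is impossible by discreteness of $L$ together with $L\cap\{y_2=0\}=\{0\}$, so $y_1(v_k)\to+\infty$ along a subsequence. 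Fixing $p_0\in L$ with $y_2(p_0)\in(\log c_1,\log c_2)$ (which exists by density), the points $p_0+v_k$ lie in $R$ for all large $k$ and are distinct, so $L\cap R$ is infinite.

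The main obstacle is the infinitude claim: a half‑strip of positive width need not meet a prescribed lattice at all, so one genuinely needs the fact that the strip's axis carries no lattice point, and the delicate part is the bookkeeping that makes this forbidden axis coincide precisely with the direction excluded by the arithmetic input above. Everything else is routine lattice geometry.
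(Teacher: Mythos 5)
Your argument is correct, and while its skeleton (logarithmic embedding, reduction to lattice geometry in the plane $x_0+x_1+x_2=0$, and the arithmetic fact that $\varphi(\mc{Z}_\Lambda^\times)$ contains no nonzero vector with $x_1=x_2$) matches the paper's, your infinitude step is genuinely different. The paper argues by contradiction: assuming $\mc{U}(c_1,c_2)=\emptyset$, it invokes Minkowski's convex body theorem to rule out nonzero unit-lattice points in the whole two-sided strip $\log c_1<x_1-x_2<\log c_2$, and then Blichfeldt's theorem on this infinite-volume strip to manufacture a nonzero lattice vector $\bm{\xi}$ with $\xi_1=\xi_2$, contradicting the arithmetic fact; infinitude is then deduced from nonemptiness for every pair $(c_1,c_2)$. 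You instead read the arithmetic fact as injectivity of the functional $x_1-x_2$ on the unit lattice, so its image is a rank-two and hence dense subgroup of $\R$; lattice vectors with $x_1-x_2$ nonzero and tending to $0$ must (after a sign change and passage to a subsequence) have $x_0\to\infty$ by discreteness, and translating a single base point with $x_1-x_2\in(\log c_1,\log c_2)$ by these vectors places infinitely many lattice points in the half-strip directly. Your route avoids both geometry-of-numbers theorems and yields infinitude in one stroke, while the paper's is closer to classical unit-theorem arguments; the uniform-discreteness halves of the two proofs are essentially identical (finitely many lattice points in a bounded slab, and no nonzero lattice point with $x_0=0$). One cosmetic slip: your isolated arithmetic input should say ``no $u\neq\pm1$'' rather than ``no nonzero $u$'' satisfies $|\sigma_1(u)|=|\sigma_2(u)|$, since $\pm1$ trivially do; the lattice formulation you actually derive and use is the correct one.
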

		\begin{proof}
		First of all, to see that this set is uniformly discrete, consider the logarithmic embedding $\varphi:\mc{Z}_\Lambda^\times\rar\R^3$. For any $\bm{a}\in\Z^2$, the point $u_{\bm{a}}$ will be in $\mc{U}(c_1,c_2)$ if and only if
		\begin{equation*}
			\varphi(u_{\bm{a}})\in \mc{R}_>(c_1,c_2),
		\end{equation*}
		where
		\[\mc{R}_>(c_1,c_2)=\{(x_0,x_1,x_2)\in\R^3:x_0>0, ~\log c_1<x_1-x_2<\log c_2\}.\]
		For any $c_3>0$, the intersection of the hyperplane
		\begin{equation}\label{eqn.TotRealHypEqn}
			x_0+x_1+x_2=0
		\end{equation}
		with the region
		\[\{(x_0,x_1,x_2)\in\R^3:0<x_0<c_3, ~\log c_1<x_1-x_2<\log c_2\}\]
		is bounded, therefore it contains finitely many points of $\varphi(\mc{Z}_\Lambda^\times)$. Since $\varphi(\mc{Z}_\Lambda^\times)$ is a lattice in this hyperplane, this argument easily implies that the $x_0$ coordinates of the lattice points in $\mc{R}_>(c_1,c_2)$ are a uniformly discrete set. This is in turn implies that $\mc{U}(c_1,c_2)$ is uniformly discrete (in fact, its elements are spaced apart at least geometrically).
		
		To prove that $\mc{U}(c_1,c_2)$ is infinite, it is enough to show that, for any $0<c_1<c_2$, it is non-empty. Suppose to the contrary that $0<c_1<c_2$ and that
		\begin{equation}\label{eqn.TotRealUnitsDist2}
			\varphi\left(\left\{u_{\bm{a}}:\bm{a}\in\Z^2\right\}\right)\cap\mc{R}_>(c_1,c_2)=\emptyset.
		\end{equation}
		Under this assumption it is not difficult to show that we also must have
		\begin{equation}\label{eqn.TotRealUnitsDist3}
		\varphi\left(\left\{u_{\bm{a}}:\bm{a}\in\Z^2\setminus{\bm{0}}\right\}\right)\cap\mc{R}_\le(c_1,c_2)=\emptyset,
	\end{equation}
		where
		\[\mc{R}_\le(c_1,c_2)=\{(x_0,x_1,x_2)\in\R^3:x_0\le 0,~\log c_1<x_1-x_2<\log c_2\}.\]
		To see this, note that $\varphi\left(\left\{u_{\bm{a}}:\bm{a}\in\Z^2\right\}\right)$ is a lattice in the hyperplane \eqref{eqn.TotRealHypEqn}, and suppose there were a non-zero point $\bm{\lambda}=(\lambda_0,\lambda_1,\lambda_2)$ of this lattice in $\mc{R}_\le(c_1,c_2)$. By the Minkowski convex body theorem \cite[Appendix B, Theorem 2]{Cass1957}, for any $\epsilon>0$ there is a non-zero point $\bm{\lambda}'=(\lambda_0',\lambda_1',\lambda_2')$ of the lattice with
		\[|\lambda_1'-\lambda_2'|<\epsilon.\]
		By choosing $\epsilon$ sufficiently small, we can ensure that $\lambda_0'>|\lambda_0|$ and that
		\[\log c_1<(\lambda_1-\lambda_2)+(\lambda_1'-\lambda_2')<\log c_2.\]
		This implies that $\bm{\lambda}+\bm{\lambda}'\in\mc{R}_>(c_1,c_2)$, which contradicts  \eqref{eqn.TotRealUnitsDist2}. Therefore if \eqref{eqn.TotRealUnitsDist2} holds then there is no non-zero point of $\varphi\left(\left\{u_{\bm{a}}:\bm{a}\in\Z^2\right\}\right)$ in the set
		\begin{equation}
			\mc{R}(c_1,c_2)=\{(x_0,x_1,x_2)\in\R^3:\log c_1<x_1-x_2<\log c_2\}.
		\end{equation}
		
		Still assuming that \eqref{eqn.TotRealUnitsDist2} holds, note that the intersection of $\mc{R}(c_1,c_2)$ with the hyperplane \eqref{eqn.TotRealHypEqn} has infinite volume. Therefore, by Blichfeldt's theorem \cite[Appendix B, Theorem 1]{Cass1957}, there are distinct points $\bm{y}$ and $\bm{y}'$ in this intersection with
		\[\bm{y}-\bm{y}'=\bm{\xi}=(\xi_0,\xi_1,\xi_2)\in\varphi\left(\left\{u_{\bm{a}}:\bm{a}\in\Z^2\right\}\right).\]
		By replacing $\bm{\xi}$ with its negative, if necessary, we then have that
		\[0\le \xi_1-\xi_2<\log c_2-\log c_1.\]
		If $\xi_1-\xi_2>0$ then an integer multiple of $\bm{\xi}$ lies in $\mc{R}(c_1,c_2),$ which gives a contradiction. Therefore we must have that $\xi_1=\xi_2$. This implies that there is a non-zero $\bm{a}\in\Z^2$ with
		\[|\sigma_1(u_{\bm{a}})|=|\sigma_2(u_{\bm{a}})|.\]
		However this cannot happen because:
		\begin{itemize}[itemsep=5bp,parsep=5bp, topsep=0bp]
			\item[(i)] The unit $u_{\bm{a}}$ is an irrational number in a cubic extension of $\Q$. Therefore it is also a cubic irrational, which by separability implies that $\sigma_1(u_{\bm{a}})\not=\sigma_2(u_{\bm{a}})$.
			\item[(ii)] If $\sigma_1(u_{\bm{a}})=-\sigma_2(u_{\bm{a}})$ then
			\[\Tr(u_{\bm{a}})=u_{\bm{a}}\notin\Q,\]
			which is a contradiction.
		\end{itemize}
		We conclude that, for $0<c_1<c_2$, equation \eqref{eqn.TotRealUnitsDist2} cannot hold. This therefore completes the proof of the lemma.
		\end{proof}
		Next suppose that $s\in\Lambda^*\setminus\{0\}$ and, for each $\bm{a}\in\Z^2$ and $i=1,2$ write
		\[q_{\bm{a}}=\Tr\left(su_{\bm{a}}\right)\quad\text{and}\quad p_{\bm{a},i}=\Tr\left(su_{\bm{a}}\alpha_i\right).\]
		The totally real case of Theorem \ref{thm.CubicApps} will follow almost immediately from the following result.
		\begin{lemma}\label{lem.TotRealAccumPtStruc}
		For any $s\in\Lambda^*\setminus\{0\}$ and $C>0$, the collection of accumulation points of
		\begin{equation}\label{eqn.HypAccumPts}
			\{\pm|q_{\bm{a}}|^{1/2}(q_{\bm{a}}\balph-\bm{p}_{\bm{a}}):\bm{a}\in\Z^2, u_{\bm{a}}>1\}\cap [-C,C]^2
		\end{equation}
		is either
		\begin{equation*}
			\left|\NN (s)\right|^{1/2}\mc{C}_K^+\cap[-C,C]^2\quad\text{or}\quad\left|\NN (s)\right|^{1/2}\mc{C}_K^-\cap[-C,C]^2,
		\end{equation*}
		depending on whether $\sigma_1(s)\sigma_2(s)$ is positive or negative.
		\end{lemma}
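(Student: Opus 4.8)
The plan is to combine Lemma~\ref{lem.NormAppChar2} (which places every normalized approximation on a dilate of a fixed conic) with Lemma~\ref{lem.TotRealUnitDist} (which controls the distribution of the relevant units), together with one elementary sign observation that produces the dichotomy. First I would unwind Lemma~\ref{lem.NormAppChar2} in the present situation, where $K$ is a totally real cubic field, so in the notation of Section~\ref{sec.NormAlg} we have $d=2$, $r=2$, and $s=0$ complex places. For $\bm{a}\in\Z^2$ with $u_{\bm{a}}>1$, Lemma~\ref{lem.NormAppChar2} gives
\[
|q_{\bm{a}}|^{1/2}\left(q_{\bm{a}}\balph-\bm{p}_{\bm{a}}\right)=\gamma_{\bm{a}}\,\bm{\beta}_{\bm{a}}M(\balph),
\]
where, by \eqref{eqn.Defgammau}--\eqref{eqn.Defbetau},
\[
\bm{\beta}_{\bm{a}}=\frac{\left(\sigma_1(su_{\bm{a}}),\,\sigma_2(su_{\bm{a}})\right)}{\left|\sigma_1(su_{\bm{a}})\sigma_2(su_{\bm{a}})\right|^{1/2}}
\]
lies on the curve $|x_1x_2|=1$ of \eqref{eqn.SurfEqn}, and $\gamma_{\bm{a}}\to|\NN(s)|^{1/2}$ along any sequence of dominant units. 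The sign observation is that $u_{\bm{a}}=(u_1^{a_1}u_2^{a_2})^2$ is the square of an element of $\mc{Z}_\Lambda^\times\subseteq\R$, so $\sigma_1(u_{\bm{a}})\sigma_2(u_{\bm{a}})>0$; hence $\sigma_1(su_{\bm{a}})\sigma_2(su_{\bm{a}})$ has the constant sign $\varepsilon_s:=\mathrm{sgn}(\sigma_1(s)\sigma_2(s))$ for all $\bm{a}$, and therefore $\bm{\beta}_{\bm{a}}$ lies on $x_1x_2=\varepsilon_s$. In other words $\bm{\beta}_{\bm{a}}M(\balph)\in\mc{C}_K^+$ when $\sigma_1(s)\sigma_2(s)>0$ and $\bm{\beta}_{\bm{a}}M(\balph)\in\mc{C}_K^-$ when $\sigma_1(s)\sigma_2(s)<0$, which already singles out which hyperbola can occur.

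For the inclusion ``$\subseteq$'', suppose $\bm{\gamma}$ is an accumulation point of the set in \eqref{eqn.HypAccumPts}. Since $M(\balph)$ is invertible, lying in $[-C,C]^2$ is equivalent to $|q_{\bm{a}}|\,\sigma_i(su_{\bm{a}})^2\ll 1$ for $i=1,2$; combining this with the elementary bound $|su_{\bm{a}}|\le 3|q_{\bm{a}}|$ (valid once $|q_{\bm{a}}|\ge1$, from $q_{\bm{a}}=\Tr(su_{\bm{a}})$) and the identity $|su_{\bm{a}}|\,|\sigma_1(su_{\bm{a}})|\,|\sigma_2(su_{\bm{a}})|=|\NN(s)|$, I would deduce $|\sigma_i(u_{\bm{a}})|\asymp|u_{\bm{a}}|^{-1/2}$ for $i=1,2$, i.e. that $u_{\bm{a}}$ is a dominant unit with implied constants depending only on $C$ and $s$. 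A short lattice point count in the logarithmic embedding then shows that only finitely many $\bm{a}$ yield a point of \eqref{eqn.HypAccumPts} with $u_{\bm{a}}$ bounded, so any sequence realizing $\bm{\gamma}$ has $u_{\bm{a}}\to\infty$; along it Lemma~\ref{lem.NormAppChar2} gives $\gamma_{\bm{a}}\to|\NN(s)|^{1/2}$, while $\bm{\beta}_{\bm{a}}M(\balph)$ stays on the closed curve $\mc{C}_K^{\varepsilon_s}$. Hence $\bm{\gamma}\in|\NN(s)|^{1/2}\mc{C}_K^{\varepsilon_s}\cap[-C,C]^2$.

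For the reverse inclusion I would start from $\bm{\gamma}=\pm|\NN(s)|^{1/2}\bm{v}M(\balph)$ in $|\NN(s)|^{1/2}\mc{C}_K^{\varepsilon_s}\cap[-C,C]^2$, with $v_1v_2=\varepsilon_s$. Writing $\bm{\beta}_{\bm{a}}$ in terms of the ratio $\sigma_1(u_{\bm{a}})/\sigma_2(u_{\bm{a}})$ and the fixed signs of $\sigma_1(s),\sigma_2(s)$, it suffices to produce $\bm{a}^{(n)}$ with $u_{\bm{a}^{(n)}}>1$, $u_{\bm{a}^{(n)}}\to\infty$, and $\sigma_1(u_{\bm{a}^{(n)}})/\sigma_2(u_{\bm{a}^{(n)}})$ converging to the positive number $v_1^2|\sigma_2(s)|/|\sigma_1(s)|$; the sign $\pm$ in \eqref{eqn.HypAccumPts} then selects whichever of $\bm{v}$, $-\bm{v}$ (the two branches of $\mc{C}_K^{\varepsilon_s}$, which is symmetric about the origin) is actually realized. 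This is exactly what Lemma~\ref{lem.TotRealUnitDist} delivers: applying it with $c_1,c_2$ a shrinking sequence of windows around the target ratio produces infinitely many admissible $u_{\bm{a}}>1$, uniform discreteness forces them to tend to infinity, and the bounded ratio forces them to be dominant, so $\gamma_{\bm{a}}\to|\NN(s)|^{1/2}$ and $\bm{\beta}_{\bm{a}}M(\balph)\to\bm{v}M(\balph)$, giving $\bm{\gamma}$ as an accumulation point. (For $\bm{\gamma}$ on the boundary of $[-C,C]^2$ one approaches it along the hyperbola from inside the square, or simply notes that in the applications $C$ is eventually sent to $\infty$.)

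The step I expect to be the main obstacle is the quantitative claim in the ``$\subseteq$'' direction, that a bounded orbit point forces $u_{\bm{a}}$ to be nearly dominant: one must rule out ``unbalanced'' units, where $|\sigma_1(su_{\bm{a}})|$ is comparatively large and $|\sigma_2(su_{\bm{a}})|$ compensatingly small (or vice versa), and this requires playing the trace estimate for $|q_{\bm{a}}|$ off against the norm identity, with some care about cancellation in $q_{\bm{a}}=\Tr(su_{\bm{a}})$ (the degenerate case $q_{\bm{a}}=0$ is excluded because such points are not in $\mathrm{NA}_{1/2}(\balph)$, and $\bm{0}$ is not an accumulation point since $\balph$ is badly approximable). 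A secondary point of care is ensuring, in the ``$\supseteq$'' direction, that the units supplied by Lemma~\ref{lem.TotRealUnitDist} genuinely escape to infinity, which is precisely where both the infinitude and the uniform discreteness in that lemma are used.
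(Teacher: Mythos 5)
Your proposal is correct and follows essentially the same route as the paper: the parametrization from Lemma \ref{lem.NormAppChar2}, the observation that $u_{\bm{a}}$ is totally positive so the sign of $\sigma_1(s)\sigma_2(s)$ fixes the hyperbola, and Lemma \ref{lem.TotRealUnitDist} to show that boundedness forces the units into some $\mc{U}(c_1,c_2)$ (giving the inclusion) and that shrinking ratio windows realize every point of the hyperbola (giving the reverse inclusion). The extra quantitative detail you supply for the dominance claim is precisely what the paper asserts implicitly when it says the relevant units lie in $\mc{U}(c_1,c_2)$.
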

		\begin{proof}					
		Suppose for simplicity that $\sigma_1(s)$ and $\sigma_2(s)$ are both positive (the other cases are all of equal difficulty to handle). By Lemma \ref{lem.NormAppChar2}, for each $\bm{a}\in\Z^2$ we have that
		\[|q_{\bm{a}}|^{1/2}(q_{\bm{a}}\balph-\bm{p}_{\bm{a}})=\gamma_{\bm{a}}M(\balph)\bm{\beta}_{\bm{a}},\]
		where 
		\[\gamma_{\bm{a}}=|q_{\bm{a}}|^{1/2}|x_{\bm{a},1}x_{\bm{a},2}|^{1/2}\]
		and
		\[\bm{\beta}_{\bm{a}}=\left(\left(\frac{x_{\bm{a},1}}{x_{\bm{a},2}}\right)^{1/2},\left(\frac{x_{\bm{a},2}}{x_{\bm{a},1}}\right)^{1/2}\right)^t,\]
		with
		\[x_{\bm{a},j}=\sigma_j(su_{\bm{a}})\quad\text{for}\quad j=1,2.\]
		Note that by assumption here, $x_{\bm{a},j}>0$ for $j=1,2$. The set of units $u_{\bm{a}}>1$, with $\bm{a}\in\Z^2,$ for which
		\[\pm|q_{\bm{a}}|^{1/2}(q_{\bm{a}}\balph-\bm{p}_{\bm{a}})\in [-C,C]^2\]
		is a subset of $\mc{U}(c_1,c_2)$, for some $0<c_1<c_2$, where $\mc{U}(c_1,c_2)$ is defined by \eqref{eqn.UnitHypSection}. Therefore by Lemma \ref{lem.TotRealUnitDist} this set of units is a uniformly discrete subset of $(1,\infty)$. It follows from formula \eqref{eqn.GamEst} that any accumulation point of \eqref{eqn.HypAccumPts} must lie on $\left|\NN (s)\right|^{1/2}\mc{C}_K^+\cap[-C,C]^2$. It is also clear from the same formula that none of the points of \eqref{eqn.HypAccumPts} actually lie on this section of the hyperbola.
		
		All that remains is to show that, for all $\epsilon>0$, any $\epsilon$-neighborhood of an arc of the hyperbola above contains a point of \eqref{eqn.HypAccumPts}. Given any such arc, we may first choose $0<c_1<c_2$ so that for every sufficiently large $u_{\bm{a}}\in\mc{U}(c_1,c_2)$, the points
		\[\pm|q_{\bm{a}}|^{1/2}(q_{\bm{a}}\balph-\bm{p}_{\bm{a}})\]
		all lie in this arc (with the plus or minus sign chosen appropriately). Then (since by Lemma \ref{lem.TotRealUnitDist}, the set $\mc{U}(c_1,c_2)$ is infinite and uniformly discrete), we can let $u_{\bm{a}}$ tend to infinity to ensure that the corresponding normalized approximations all lie within the $\epsilon$-neighborhood of this arc. This completes the proof.
		\end{proof}
		
		Finally, for each $s\in\Lambda^*\setminus\{0\}$ we define $\ell_s:\R^2\rar\R^2$ to be the identity map if $\sigma_1(s)\sigma_2(s)>0$, and an invertible linear map from $\mc{C}_K^+$ to $\mc{C}_K^-$ otherwise. Lemma \ref{lem.TotRealAccumPtStruc} guarantees that
		\begin{equation}\label{eqn.Thm5AccPts2}
		\bigcup_{s\in \Lambda^*\setminus\{0\}}\left|\NN (s)\right|^{1/2}\ell_s(\mc{C}^+_K)\subseteq\mc{A}_{1/2}(\balph).
		\end{equation}	
		Using the notation of Lemmas \ref{lem.NormAppChar1} and \ref{lem.NormAppChar2}, for each $s$, it is possible that there could be other accumulation points of the set
		\[\left\{\pm|q_u|^{1/2}(q_{u}\balph-\bm{p}_{u}):u\in\mc{Z}_\Lambda^\times, u \text{ a dominant unit}\right\},\]
		which do not lie on $|N(s)|^{1/2}\ell_s(\mc{C}^+_K)$. If this happens then, by the arguments above, there would have to be a unit $\tilde{u}$ with $\sigma_1(\tilde{u})\sigma_2(\tilde{u})<0$, and the extra accumulation points would lie on the complementary hyperbola. But then the element $\tilde{s}=s\tilde{u}\in\Lambda^*\setminus\{0\}$ would satisfy
		\[\mathrm{sgn}(\sigma_1(s)\sigma_2(s))=-\mathrm{sgn}(\sigma_1(\tilde{s})\sigma_2(\tilde{s})),\]
		which means that the extra accumulation points in this case would already have been accounted for on the left hand side of \eqref{eqn.Thm5AccPts2} by the $\tilde{s}$ term in the union. This therefore completes the proof of Theorem \ref{thm.CubicApps}.

		\vspace{.15in}
		
		{\footnotesize
			\noindent
			Department of Mathematics\\
			University of Houston\\
			Houston, TX, United States\\
			haynes@math.uh.edu, kkavita@cougarnet.uh.edu
			
		}


\begin{thebibliography}{1}
			
			\bibitem{Adam1969a}
			W.~W.~Adams:
			\emph{Simultaneous asymptotic diophantine approximations to a basis of a real cubic number field},
			J. Number Theory 1 (1969), 179--194.
			
			\vspace*{.1in}

			\bibitem{Adam1969b}
			W.~W.~Adams:
			\emph{Simultaneous diophantine approximations and cubic irrationals},
			Pacific J. Math. 30 (1969), 1--14.
			
			\vspace*{.1in}
			
			\bibitem{Adam1971}
			W.~W.~Adams:
			\emph{Simultaneous asymptotic diophantine approximations to a basis of a real number field},
			Nagoya Math. J. 42 (1971), 79--87.
			
			\vspace*{.1in}
						
			\bibitem{AlleRami2022}
			D.~Allen, F.~ Ram\'irez:
			\emph{Independence inheritance and Diophantine approximation for systems of linear forms},
			IMRN, to appear.
			
			\vspace*{.1in}
			
			\bibitem{Bake1977}
			R.~C.~Baker:
			\emph{Singular n-tuples and Hausdorff dimension},
			Math. Proc. Cambridge Philos. Soc. 81 (1977), no. 3, 377--385.
			
			\vspace*{.1in}
			
			\bibitem{Bake1992}
			R.~C.~Baker:
			\emph{Singular n-tuples and Hausdorff dimension. II},
			Math. Proc. Cambridge Philos. Soc.  111  (1992),  no. 3, 577--584.
			
			\vspace*{.1in}
			
			\bibitem{BereDickVela2006}
			V.~Beresnevich, D.~Dickinson, S.~Velani:
			\emph{Measure theoretic laws for lim sup sets},
			Mem. Amer. Math. Soc.  179  (2006),  no. 846.
			
			\vspace*{.1in}
			
			
			\bibitem{BereGuanMarnRamiVela2022}
			V.~Beresnevich, L.~Guan, A.~Marnat, F.~Ram\'{i}rez, S.~Velani:
			\emph{Dirichlet is not just bad and singular},
			Adv. Math. 401 (2022), Paper No. 108316, 57 pp.
			
			\vspace*{.1in}
			
			\bibitem{Besi2006}
			A.~S.~Besicovitch:
			\emph{Sets of Fractional Dimensions (IV): On Rational Approximation to Real Numbers},
			J. London Math. Soc.  9  (1934),  no. 2, 126--131.
			
			\vspace*{.1in}
			
%
			
%
%
			\bibitem{BugeCheuChev2019}
			Y.~Bugeaud, Y.~Cheung, N.~Chevallier:
			\emph{Hausdorff dimension and uniform exponents in dimension two},
			Math. Proc. Cambridge Philos. Soc. 167 (2019), no. 2, 249--284.
			
			\vspace*{.1in}
			
			\bibitem{Cass1957}
			J.~W.~S.~Cassels:
			\emph{An introduction to Diophantine approximation},
			Cambridge Tracts in Mathematics and Mathematical Physics, No. 45. Cambridge University Press, New York,  1957.
			
			\vspace*{.1in}
			
			\bibitem{Cass1950}
			J.~W.~S.~Cassels:
			\emph{Some metrical theorems in Diophantine approximation. I},
			Proc. Cambridge Philos. Soc.  46  (1950), 209--218.
			
			\vspace*{.1in}
			
			\bibitem{CassSwin1955}
			J.~W.~S.~Cassels, H.~P.~F.~Swinnerton-Dyer:
			\emph{On the product of three homogeneous linear forms and the indefinite ternary quadratic forms},
			Philos. Trans. Roy. Soc. London Ser. A  248  (1955), 73--96.
			
			\vspace*{.1in}
			
			
			\bibitem{Cheu2011}
			Y.~Cheung:
			\emph{Hausdorff dimension of the set of singular pairs},
			Ann. of Math. (2) 173 (2011), no. 1, 127--167.
			
			\vspace*{.1in}
						
			\bibitem{Chev2002}
			N.~Chevallier:
			\emph{Best simultaneous Diophantine approximations of some cubic algebraic numbers},
			Journal de Th\'{e}orie des Nombres de Bordeaux 14 (2002), no. 2, 403--414.
			
			\vspace*{.1in}

			\bibitem{EnnoTuru1985}
			V.~Ennola, R.~Turunen:
			\emph{On totally real cubic fields},
			Math. Comp. 44 (1985), no. 170, 495--518.

			\vspace*{.1in}
			
			\bibitem{Falc2014}
			K.~Falconer:
			\emph{Fractal geometry},
			John Wiley \& Sons, Ltd., Chichester, 2014.
			
			\vspace*{.1in}
			
			\bibitem{Gall1961}
			P.~Gallagher:
			\emph{Approximation by reduced fractions},
			J. Math. Soc. Japan  13  (1961), 342--345.
			
			\vspace*{.1in}
			
			\bibitem{Hens2006}
			D.~Hensley:
			\emph{Continued fractions},
			World Scientific,  2006.
			
			\vspace*{.1in}
			
			\bibitem{ItoYasu2006}
			S.~Ito, S.~Yasutomi:
			\emph{On simultaneous Diophantine approximation to periodic points related to modified Jacobi-Perron algorithm},
			Probability and number theory---{K}anazawa (2005), Adv. Stud. Pure Math. 49 (2007), 171--184.
			\vspace*{.1in}
				
			\bibitem{Jarn1929}
			V.~Jarn\'{i}k:
			\emph{Diophantische Approximationen und Hausdorffsches Mass},
			Mat. Sb. 36 (1929), 371--382.
			
			\vspace*{.1in}
			
			\bibitem{Jarn1931}
			V.~Jarn\'{i}k:
			\emph{\"{U}ber die simultanen diophantischen Approximationen}
			Math. Z.  33  (1931),  no. 1, 505--543.
			
			\vspace*{.1in}
			
			\bibitem{Jarn1928}
			V.~Jarn\'{i}k:
			\emph{Zur metrischen Theorie der diophantischen Approximationen},
			Prace mat. fiz. 36 (1928), 91--106.
			
			\vspace*{.1in}
			
			\bibitem{Khin1926}
			A.~Y.~Khintchine:
			\emph{\"{U}ber eine Klasse linearer diophantischer Approximationen},
			Rend. Circ. Mat. Palermo 50 (1926), 170--195.
			
			\vspace*{.1in}
			
			\bibitem{LiaoShiSolaTama2020}
			L.~Liao, R.~Shi, O.~Solan, N.~Tamam:
			\emph{Hausdorff dimension of weighted singular vectors in $\R^2$},
			J. Eur. Math. Soc. (JEMS) 22 (2020), no. 3, 833--875.
			
			\vspace*{.1in}
			
			\bibitem{Peck1961}
			L.~G.~Peck:
			\emph{Simultaneous rational approximations to algebraic numbers},
			Bull. Amer. Math. Soc.  67  (1961), 197--201.
			
			\vspace*{.1in}
			
			\bibitem{Perr1921}
			O.~Perron:
			\emph{\"Uber diophantische Approximationen},
			Math. Ann.  83  (1921),  no. 1-2, 77--84.
			
			\vspace*{.1in}
			
			\bibitem{Rynn1990}
			B.~P.~Rynne:
			\emph{A lower bound for the Hausdorff dimension of sets of singular n-tuples},
			Math. Proc. Cambridge Philos. Soc.  107  (1990),  no. 2, 387--394.
			
			\vspace*{.1in}
			
			\bibitem{Schm1969}
			W.~M.~Schmidt:
			\emph{Badly approximable systems of linear forms},
			J. Number Theory  1 (1969), 139--154.
			
			\vspace*{.1in}
			
			\bibitem{Swin2001}
			H.~P.~F.~Swinnerton-Dyer:
			\emph{A brief guide to algebraic number theory},
			London Mathematical Society Student Texts, 50, Cambridge University Press, Cambridge,  2001.
			
			\vspace*{.1in}
			
			\bibitem{Yavi1987}
			K.~Y.~Yavid:
			\emph{An estimate for the Hausdorff dimension of a set of singular vectors},
			Dokl. Akad. Nauk BSSR 31 (1987), 777--780.
			
			
		\end{thebibliography}
	\end{document}